\documentclass[a4paper,10pt]{preprint}
\usepackage[left=1.3in,right=1.3in,bottom=1.4in]{geometry}  
\usepackage[full]{textcomp}
\usepackage[osf]{newtxtext}

\usepackage{mathalfa}
\usepackage{microtype}

\usepackage{orcidlink}

\usepackage{booktabs}
\usepackage{enumitem}
\usepackage{amsmath}
\usepackage{amsfonts} 
\usepackage{amssymb}             
\usepackage{stmaryrd}
\usepackage{comment}
\usepackage{ulem}
\usepackage{mathrsfs}
\usepackage{booktabs}
\usepackage{mathtools}
\usepackage{tikz}
\usepackage{amsthm}                
\usepackage{mhequ}
\usepackage{hyperref}

\definecolor{labelkey}{gray}{.8}
\definecolor{refkey}{gray}{.8}

\hypersetup{pdfstartview=}
\setcounter{tocdepth}{4}
\setcounter{secnumdepth}{4}
\addtocontents{toc}{\protect\hypersetup{hidelinks}}
\DeclareSymbolFont{sfoperators}{OT1}{ptm}{m}{n}
\DeclareSymbolFontAlphabet{\mathsf}{sfoperators}

\makeatletter
\def\operator@font{\mathgroup\symsfoperators}
\makeatother

\numberwithin{equation}{section}

\newtheorem{thm}{Theorem}[section]
\newtheorem{defn}[thm]{Definition}
\newtheorem{lem}[thm]{Lemma}
\newtheorem{prop}[thm]{Proposition}

\newtheorem{cor}[thm]{Corollary}

\newtheorem{assumption}[thm]{Assumption}
\theoremstyle{remark}

\newtheorem{rmk}[thm]{Remark}

\def\grads{\grad_{\!\mathrm{sym}}}

\def\Wick#1{\mathord{{:}{#1}{:}}}

\makeatletter
\def\th@newremark{\th@remark\thm@headfont{\bfseries}}

\def\bdiamond{\mathop{\mathpalette\bdi@mond\relax}}
\newcommand\bdi@mond[2]{%
	\vcenter{\hbox{\m@th
			\scalebox{\ifx#1\displaystyle 2.6\else1.8\fi}{$#1\diamond$}%
	}}%
}

\def\bDiamond{\mathop{\mathpalette\bDi@mond\relax}}
\newcommand\bDi@mond[2]{%
	\vcenter{\hbox{\m@th
			\scalebox{\ifx#1\displaystyle 2.6\else1.2\fi}{$#1\Diamond$}%
	}}%
}
\makeatother

\newcommand{\EE}{\mathbb{E}}     

\newcommand{\PP}{\mathbb{P}}     

\newcommand{\aA}{\mathcal{A}}
\newcommand{\bB}{\mathcal{B}}
\newcommand{\cC}{\mathcal{C}}

\newcommand{\fF}{\mathcal{F}}

\newcommand{\hH}{\mathcal{H}}

\newcommand{\lL}{\mathcal{L}}
\newcommand{\mM}{\mathcal{M}}

\newcommand{\pP}{\mathcal{P}}

\newcommand{\rR}{\mathcal{R}}
\newcommand{\sS}{\mathcal{S}}

\newcommand{\xX}{\mathcal{X}}

\newcommand{\fA}{\mathfrak{A}}
\newcommand{\fB}{\mathfrak{B}}

\newcommand{\fK}{\mathfrak{K}}

\makeatletter 
\newcommand{\cov}{{\operator@font cov}}
\newcommand{\var}{{\operator@font var}}
\newcommand{\corr}{{\operator@font corr}}
\newcommand{\diam}{{\operator@font diam}}
\newcommand{\Av}{{\operator@font Av}}
\newcommand{\trig}{{\operator@font trig}}
\newcommand{\Enh}{{\operator@font Enh}}
\newcommand{\EEnh}{\overline {\operator@font Enh}}
\makeatother

\let\div\undefined
\DeclareMathOperator{\supp}{supp}
\DeclareMathOperator{\div}{div}


\newcommand{\C}{\mathbf{C}}

\newcommand{\M}{\mathbf{M}}
\renewcommand{\P}{\mathbf{P}}

\renewcommand{\L}{\mathbf{L}}
\newcommand{\N}{\mathbf{N}}

\newcommand{\R}{\mathbf{R}}
\newcommand{\T}{\mathbf{T}}

\newcommand{\Z}{\mathbf{Z}}

\newcommand{\one}{\mathbf{1}}

\renewcommand{\d}{\partial}

\newcommand{\eps}{\varepsilon}

\newcommand{\Laplace}{\Delta}
\newcommand{\dtLaplace}{\d_t-\Laplace}
\newcommand{\md}{\mathrm{d}}
\newcommand{\bPi}{\boldsymbol{\Pi}}
\newcommand{\bxi}{\boldsymbol{\xi}}
\newcommand{\bPhi}{\boldsymbol{\Phi}}


\newcommand{\grad}{\nabla}
\newcommand{\wl}{w^{\lL}}
\newcommand{\bwl}{\Bar{w}^{\lL}}
\newcommand{\wh}{w^{\hH}}
\newcommand{\bwh}{\Bar{w}^{\hH}}

\newcommand{\taut}[1]{\tau_t^{(#1)}}

\newcommand{\ur}{u_{\mathfrak{r}}}
\newcommand{\us}{u_{\mathfrak{s}}}

\newcommand{\Besov}[2]{\bB^{#1}_{#2}}
\DeclarePairedDelimiter{\bracket}{\langle}{\rangle}

\newcommand{\para}{\olessthan}
\newcommand{\rpara}{\ogreaterthan}
\newcommand{\reso}{\odot}

\newcommand{\pareq}{\mathrel{\rlap{\kern0.07em\tikz[x=0.1em,y=0.1em,baseline=0.04em] \draw[line width=0.3pt] (0,1.93) -- (4.5,0);}\olessthan}}
\newcommand{\rpareq}{\mathrel{\rlap{\kern0.23em\tikz[x=0.1em,y=0.1em,baseline=0.04em] \draw[line width=0.3pt] (0,0) -- (4.5,1.93);}\ogreaterthan}}

\def\dash{\leavevmode\unskip\kern0.18em--\penalty\exhyphenpenalty\kern0.18em}
\def\slash{\leavevmode\unskip\kern0.15em/\penalty\exhyphenpenalty\kern0.15em}

\newcommand{\eqlaw}{\stackrel{\mbox{\tiny\rm law}}{=}}

\makeatletter

\DeclareRobustCommand{\TitleEquation}[2]{\texorpdfstring{\StrLeft{\f@series}{1}[\@firstchar]$\if%
b\@firstchar\boldsymbol{#1}\else#1\fi$}{#2}}

\makeatother

%
%
\def\smallstep{\par\smallskip\noindent\textit}
\def\step{\paragraph*}

\colorlet{darkblue}{blue!90!black}
\colorlet{darkred}{red!90!black}

\overfullrule=3mm
\marginparwidth=3.7cm

\begin{document}
\title{Ergodicity of 2D singular stochastic Navier--Stokes equations}
\author{Martin Hairer$^1$\orcidlink{0000-0002-2141-6561} and Wenhao Zhao$^2$\orcidlink{0000-0002-7489-1884}}
\institute{
Imperial College London, UK and EPFL, Switzerland.
\email{martin.hairer@epfl.ch}
\and
EPFL, Switzerland. \email{wenhao.zhao@epfl.ch}
}

\maketitle

\begin{abstract}
We consider the 2D stochastic Navier--Stokes equations driven by
noise that has the regularity of space-time white noise but doesn't exactly coincide with it.
We show that, provided that the intensity of the noise is sufficiently weak at high frequencies,
this systems admits uniform bounds in time, so that it has an invariant measure for which 
we obtain stretched exponential tail bounds.
\end{abstract}

\setcounter{tocdepth}{2}
\microtypesetup{protrusion=false}
\tableofcontents
\microtypesetup{protrusion=true}

\section{Introduction}
The aim of the present article is to obtain a priori bounds for the following stochastic Navier--Stokes equation
\begin{equ}
\label{e:SNS}
    \d_t u = \Laplace u - \P \div(u\otimes u) + \xi\;,
\end{equ}
where $u:\R_+\times \T^2 \rightarrow \R^2$ is the velocity, $\P$ is the Leray projection and 
the noise $\xi$ satisfies Assumption~\ref{as:noise} below, which has the same regularity as 
space time white noise. 

Our main motivation is to develop new techniques to obtain a priori 
bounds to singular SPDEs, in particular those without a strong damping term such as $\Phi^4$. 
A prime example is the stochastic Yang--Mills equation studied in \cite{CCHS22, CCHS24}, where 
a uniform in time bound on the solution would lead to a PDE-based construction of 
the (still putative in three dimensions) continuum Yang--Mills measure.
Equation \eqref{e:SNS} shares several features with 2D stochastic Yang--Mills, 
including a logarithm divergence in renormalisation, a nonlinear term of the form $u \grad u$, 
and the lack of strong damping effects. However, a crucial difference lies in the conserved 
quantities: the $L^2$ norm of the velocity field is decreasing in Navier--Stokes, whereas for 
Yang--Mills it is the $L^2$ norm of the curvature, corresponding to a type of $H^1$ norm. 
This makes stochastic Navier--Stokes more 
tractable than the stochastic Yang--Mills, as the solution of \eqref{e:SNS} lies in 
$\cC^{-\kappa}$ for any $\kappa > 0$, which is ``almost'' in $L^2$.
As such, we view \eqref{e:SNS} as a nice toy model (as opposed to a realistic model
for turbulence, which would rather use smooth noise of the type considered
in \cite{HM06}), which we hope will provide insights 
useful to the study of 2D stochastic Yang--Mills.

Our assumption on the driving noise goes as follows.
\begin{assumption}
    \label{as:noise}
    
Let $e_k(x):= e^{2\pi ik\cdot x} \frac{k^{\perp}}{|k^{\perp}|}$, where $k^{\perp} = (k_2, -k_1)$.
Construct a probability space $(\Omega, \PP)$ with complex Brownian Motions $\{B_k\}_{k\in \Z^2}$ such that
\begin{equ}
    \EE B_k(t) = \EE B_k^2(t) = 0\;, \quad \quad
    \EE |B_k(t)|^2 = t\;, \quad \quad B_k = \Bar{B}_{-k}
\end{equ}
and $B_k$ and $B_l$ are independent if $k\neq \pm l$. 
Then we impose $\xi$ to be of the form
\begin{equ}
    \label{e:xi_form}
    \xi = \sum_{k\neq 0} \phi_k \md B_k(t) \cdot e_k\;,
\end{equ}
where $\phi_k\in \C$ satisfies that $\phi_k = \Bar{\phi}_{-k}$, and
$    \limsup_{k\rightarrow \infty} |\phi_k| < \infty\,$.
\end{assumption}
The case where the forcing term is given by space-time white noise (i.e.\ $\phi_k = 1$) was
considered in \cite{DPD02} where global in time solutions were constructed. The approach in \cite{DPD02} however relied crucially on the 
fact that, in this particular case, the invariant measure for \eqref{e:SNS} is known explicitly
(it is simply the Gaussian measure that is invariant for the corresponding stochastic heat equation).
As soon as the intensity of even one single Fourier mode of the driving noise is slightly perturbed,
this technique fails. 

In \cite{HR24}, the first author and Rosati considered the case of noises that are smooth perturbations
of space-time white noise. Using pathwise techniques, they obtained global well-posedness, but with
an a priori bound that is roughly of the form
\begin{equ}
	\d_t \|u\| \leq C \|u\|\log\|u\|\;,
\end{equ}
yielding a double exponential growth in time. (In fact it is even worse since the constant $C$
 depends slightly on the time horizon under consideration.) Loosely speaking, the main result in the present article is a uniform in time bound for noise $\xi$ satisfying Assumption~\ref{as:noise} with $\limsup_{k\rightarrow \infty} |\phi_k|$ sufficiently small (but possibly non-zero). This allows one to prove ergodicity for the equation, under an additional mild non-degeneracy assumption on the noise. Ergodicity of stochastic Navier--Stokes equations with noise smoother than white noise has been studied in various works (see e.g. \cite{FG95, FM95, GM05, HM06} etc.). To the best of our knowledge, our result is the first time that ergodicity of the stochastic Navier--Stokes equation is proved, with a noise as rough as the space-time white noise.
  
Before stating our result, we review some recent progress on a priori bounds for singular stochastic PDEs. For $\Phi^4$ equations, due to the existence of a strong damping term, the long time behaviour is now well understood, see e.g.\ \cite{MW17b, GH19, MW20}. Uniform in time bounds were proved, which allowed to essentially complete
Parisi and Wu's stochastic quantisation programme \cite{ParisiWu} in the particular case of the $\Phi^4_3$ measure. Besides constructing the measure, it is also possible to use stochastic quantisation to study its properties. 
For example, in \cite{HS22b} a quartic tail bound is proved for $\Phi^4_3$ using stochastic quantisation, which is the best currently known tail bound for $\Phi^4_3$.

A priori bounds for singular SPDEs without strong damping term turns out to be much more difficult. For the KPZ equation without using the Hopf--Cole transform, the global results obtained up to now rely on its relation to Hamilton--Jacobi equations, see \cite{GP17} and \cite{ZZZ22} for example. Recently in \cite{CFW24,SZZ24} a priori bounds for the generalised Parabolic Anderson Model (gPAM) have been studied, strongly making use of the maximum principle. A construction for sine-Gordon quantum field theory slightly beyond the first threshold is also obtained in \cite{CFW24}. In \cite{BC24b,BC24a}, the global well-posedness for the 2D stochastic Abelian Higgs equation and the sine-Gordon model up to the third threshold are obtained. One of the main inputs in these two papers is the introduction of modified stochastic objects taking the initial data into account, while the probabilistic bounds for these objects exhibit rather weak dependence on the initial data. Applying this idea requires a good analysis on the linearised equation, see the discussions in \cite[Section~1.4]{BC24a}.

From this discussion, we can already see that obtaining a priori bounds for singular SPDEs heavily relies on the exact form of the equation, as is also often the case in the 
deterministic case.
In the cases of KPZ, gPAM and sine-Gordon, a more than logarithmic divergence in the renormalisation is allowed. However, in the present treatment of the 2D Navier--Stokes
equations, the fact that we are at the ``borderline'' of classical well-posedness 
(the term $u\otimes u$ in \eqref{e:SNS} requires renormalisation by a logarithmically 
divergent constant, although this constant is eventually ``killed'' when taking the divergence) is still necessary. Compared to many other models, the 2D Navier--Stokes 
equations possess the difficulty of being a vector-valued equation,
so that the maximum principle is not available to us and we have to rely on energy estimates to get global well-posedness.

Besides the fact that the energy of our solutions is almost surely infinite for any positive time, 
another major difficulty is the criticality of the $L^2$ norm. As a consequence, the $L^2$ norm of the solution 
cannot be expected to be used to control any stronger norm (for example $\|\cdot\|_{L^{2+}}$ or $\|\cdot\|_{H^{0+}}$ norms) in short time by standard parabolic regularity estimates. On the 2D torus we know of course that enstrophy, namely the
$H^1$ norm, is another conserved quantity for Navier--Stokes, but at this moment we don't know how to exploit 
this since the regularity of our solutions is too far below $H^1$ (as already pointed out,
the same difficulty arises in the case of 2D stochastic Yang–Mills). Moreover, the linearisation of the Navier--Stokes equations doesn't behave as nice as the Abelian Higgs and the sine-Gordon equation, so it remains unclear how to apply the idea in \cite{BC24b, BC24a} to our setting at the moment.

Our main novelty in improving \cite{HR24} is the introduction of a new ansatz. The philosophy for getting the ansatz is to try to make the rough part in the ansatz as small as possible, so that the equation for the smooth part can be considered as a perturbation of deterministic Navier--Stokes. This suggests that even in the first step, when we perform the Da Prato--Debussche trick, we should subtract only the high frequency part of the linear solution instead of the whole linear solution. This modification forces us to use It\^o's formula when doing energy estimates so that, unlike the previous works on a priori bounds for singular SPDEs that separates probability and analysis at the very beginning, we benefit from taking expectations of certain martingale terms. The idea is quite simple and it does work better in this specific example of Navier--Stokes, but we hope that it could also be applied to other equations. We will discuss our strategy in more detail in Section~\ref{sec:Ansatz}.


\subsection{Main results}
\label{sec:main_results}

Throughout the article we use the convention $\|\cdot\| = \|\cdot\|_{L^2}$, and we fix $\kappa\leq \frac{1}{10^4}$ to be a small positive constant. Since solutions to \eqref{e:SNS} belong to $\cC^{-\kappa}$ and $L^2$ estimates cannot be applied directly, we would like to decompose them into a small $\cC^{-\kappa}$ part and a large $L^2$ part. This motivates us to define the quantity
\begin{equ}
\label{e:Lyapunov_func}
    V_{\alpha}(u) := \inf \{ \|\us\| + \|\ur\|_{\cC^{-\kappa}}: u = \us + \ur, \|\ur\|_{\cC^{-\kappa}} \leq \alpha\}\;.
\end{equ}
Our main result is the following uniform in time bound, the proof of which is given at the very end of Section~\ref{sec:tail_bound}. 

\begin{thm}
\label{thm:main_result}
Let $P_t$ be the Markov semigroup of \eqref{e:SNS}. Let $V_{\alpha}: \cC^{-\kappa} \rightarrow \R$ defined above be the Lyapunov function for the Markov process. There exist some positive constants $0 < \alpha_0\leq 1$ and $C > 0$ such that, if the noise $\xi$ satisfies Assumption~\ref{as:noise} and $\limsup_{k\rightarrow \infty} |\phi_k| \leq \frac{1}{2}\alpha_0$\,, then there exists some $\gamma > 0$ such that for any $t \geq 1$ and $x\in \cC^{-\kappa}$, we have
\begin{equ}
\label{e:main_result1}
    \big(P_{t} V_{2\alpha_0}\big)(x) \leq C e^{-\gamma t} V_{2 \alpha_0}(x) + C\;.
\end{equ}
As a result, the solution to \eqref{e:SNS} admits at least one invariant measure $\mu_\star$. 
Moreover, for any $N \in \N^*$ there exists some $\gamma_N > 0$ such that for any $x \in L^2 \cap \cC^{-\kappa}$ and $t \geq 0$ we have 
\begin{equ}
\label{e:main_result2}
    \big(P_{t} V^N_{2\alpha_0}\big)(x) \leq C^N e^{-\gamma_N t} \|x\|^N + (CN)^{2N}\;.
\end{equ}
\end{thm}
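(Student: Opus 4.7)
The plan is to implement the new ansatz advertised in the introduction: for a large frequency cutoff $\lL$, write $u = \us + \wh$, where $\wh$ is the stationary Ornstein--Uhlenbeck process driven by the high-frequency piece $\xi^{\hH}$ of the noise (frequencies $|k|\geq \lL$), and $\us = u - \wh$ is the ``smooth'' remainder which contains both the low-frequency OU process $\wl$ and the genuinely nonlinear correction. Under Assumption~\ref{as:noise} with $\limsup|\phi_k|\leq \tfrac12\alpha_0$, standard Wiener-chaos estimates for stationary OU processes give, uniformly in time and with stretched-exponential tails, $\|\wh\|_{\cC^{-\kappa}}\leq 2\alpha_0$ as soon as $\lL$ is large enough. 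Hence $\wh$ is admissible as the rough part $\ur$ in the definition of $V_{2\alpha_0}$, and the whole problem reduces to controlling $\EE\|\us\|^2$ (and higher moments) uniformly in time.

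Next I would derive the equation for $\us$. Subtracting the OU equation from \eqref{e:SNS} gives
\begin{equ}
    \d_t \us = \Laplace \us - \P\div\bigl(\us\otimes\us + \us\otimes\wh + \wh\otimes\us + \Wick{\wh\otimes\wh}\bigr) + \xi^{\lL}\;,
\end{equ}
where the Wick product absorbs the logarithmic renormalisation constant (which is killed anyway by $\P\div$). The key observation is that the driving noise now seen by $\us$ is $\xi^{\lL}$, which is spatially smooth, so that It\^o's formula applies cleanly to $\|\us\|^{2}$ and the It\^o correction is just a bounded constant. After taking expectations, the purely nonlinear term $\langle\us,\P\div(\us\otimes\us)\rangle$ vanishes by divergence-freeness, leaving
\begin{equ}
    \tfrac{d}{dt}\EE\|\us\|^{2} + 2\EE\|\grad\us\|^{2} = -2\EE\bigl\langle\grad\us,\, \us\otimes\wh + \wh\otimes\us + \Wick{\wh\otimes\wh}\bigr\rangle + C_{\lL}\;.
\end{equ}
The pure-stochastic term $\Wick{\wh\otimes\wh}$ has uniform-in-time moments, so by Cauchy--Schwarz its contribution is absorbed into a small fraction of $\|\grad\us\|^2$ plus a constant. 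The cross terms of the form $\langle\grad\us, \us\otimes\wh\rangle$ can be paired, via a paraproduct decomposition, with the smallness $\|\wh\|_{\cC^{-\kappa}}\leq 2\alpha_0$: roughly, they are bounded by $C\alpha_0\,\|\grad\us\|\,\|\us\|_{\cC^{\kappa}}$, which after interpolation $\|\us\|_{\cC^{\kappa}}\lesssim \|\us\|^{1-\theta}\|\grad\us\|^{\theta}$ and Young's inequality can be absorbed into $\|\grad\us\|^2$ at the price of $C\alpha_0^{2/(1-\theta)}\|\us\|^{2}$. Choosing $\alpha_0$ small enough (and using Poincar\'e $\|\us\|^2\leq \|\grad\us\|^2$ since zero-mean forcing keeps $\us$ of mean zero) yields the contractive bound
\begin{equ}
    \tfrac{d}{dt}\EE\|\us\|^{2} \leq -\gamma\,\EE\|\us\|^{2} + C\;,
\end{equ}
and integrating gives \eqref{e:main_result1}. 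Existence of an invariant measure $\mu_\star$ then follows by Krylov--Bogoliubov using the compact embedding of $\cC^{-\kappa/2}$ into $\cC^{-\kappa}$.

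For the moment bound \eqref{e:main_result2}, apply It\^o to $\|\us\|^{2N}$. The dissipation becomes $-2N\|\us\|^{2N-2}\|\grad \us\|^{2}$, each cross term picks up a factor $\|\us\|^{2N-2}$, and the It\^o correction from $\xi^{\lL}$ contributes $N(2N-1)\|\us\|^{2N-2}$ times the trace of the covariance. Estimating the stochastic forcings by their Wiener-chaos $L^{p}$-norms, which grow like $(CN)^{N/2}$ by Gaussian hypercontractivity, and using Young's inequality with exponents tuned to each power of $\|\us\|$, one obtains
\begin{equ}
    \tfrac{d}{dt}\EE\|\us\|^{2N}\leq -\gamma_{N}\EE\|\us\|^{2N} + (CN)^{2N}\;,
\end{equ}
and Gronwall closes the argument; identifying $\us$ with the smooth component in $V_{2\alpha_0}$ yields \eqref{e:main_result2}.

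The principal obstacle is the cross term $\langle \grad\us, \us\otimes\wh\rangle$: since $\|\cdot\|_{L^{2}}$ is critical for 2D Navier--Stokes, one cannot afford to lose any fractional derivative here, and a naive bound $\|\us\|_{L^{4}}^{2}\|\wh\|_{L^{\infty}}$ fails because $\wh\notin L^{\infty}$. The paraproduct decomposition has to be carried out carefully, splitting into a ``low-high'' piece (where $\us$ provides high frequencies and $\wh$ low frequencies) that is absorbed into the dissipation with a constant controlled by $\alpha_0$, and a ``high-low'' piece whose resonant remainder can be regarded as a new stochastic object with improved regularity. Matching the absorption constant to the coefficient $2$ in front of the dissipation is precisely what pins down the smallness threshold $\limsup|\phi_k|\leq \tfrac12\alpha_0$, and making this quantitative (with the right dependence on $N$ for the higher moment bound) is where the bulk of the technical work in Section~\ref{sec:tail_bound} is presumably located.
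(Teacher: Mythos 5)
Your proposal, while laying out a plausible-looking decomposition, has a genuine gap at the crucial step where the cross term is to be absorbed into the dissipation, and this gap is precisely the obstacle that forces the paper to a substantially more elaborate ansatz.

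\smallskip

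\textbf{Where the argument breaks.} You set $\wh$ to be the stationary OU process driven by the high-frequency noise above a \emph{fixed} cutoff $\lL$ depending only on $\alpha_0$, and you then try to bound $|\langle \grad\us,\,\us\otimes\wh\rangle|$. The troublesome piece is the low-high paraproduct $\us\para\wh$. Since above the cutoff $\EE\|\Delta_j\wh\|_{L^2}^2\sim\alpha_0^2$ is uniformly bounded below in $j$, the series $\sum_j\|\Delta_j\wh\|_{L^2}^2$ diverges, so $\wh\notin L^2$ and hence $\us\para\wh\notin L^2$ either: the best one can say is $\us\para\wh\in H^{-\kappa}$ with norm $\lesssim\alpha_0\lL^{-\kappa}\|\us\|_{L^\infty}$. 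The pairing against $\grad\us$ therefore requires $\grad\us\in H^\kappa$, i.e.\ $\|\us\|_{H^{1+\kappa}}$, which the dissipation does not control. Relatedly, the interpolation you invoke, $\|\us\|_{\cC^\kappa}\lesssim\|\us\|^{1-\theta}\|\grad\us\|^\theta$ with $\theta\in(0,1)$, is false in two dimensions: by scaling the correct exponent is $\theta=1+\kappa>1$, so that Young's inequality produces $\|\grad\us\|^{2+\kappa}\|\us\|^{-\kappa}$, which cannot be absorbed by $\|\grad\us\|^2$ no matter how small $\alpha_0$ is. The $L^2$ norm is critical for 2D Navier--Stokes, and a fixed frequency cutoff cannot cancel a $\kappa$-derivative loss.

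\smallskip

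\textbf{What the paper does instead.} The paper does not try to estimate the bad paraproduct; it subtracts it. The rough component $\wh$ in the paper is $\wh=-2(\d_t-\Laplace)^{-1}\P\div(w\para\hH_K\tilde X)$, a Da Prato--Debussche--style correction, so the paraproduct disappears from the energy estimate for the remainder $\wl$ and only smoother error terms remain. Two further ingredients are essential: (i) the primary cutoff $\lambda$ is set equal to $\|w[0]\|$, so that the It\^o--Stratonovich correction coming from $\lL_\lambda\xi_1$ is of size $\alpha_0^2\lambda^2\lesssim\alpha_0^2\|w\|^2$ and can be absorbed by dissipation (a fixed cutoff would make the correction unboundedly large or the paraproduct term unboundedly large --- this is exactly the conflict the paper spells out in Section 2.2 between $\lambda\leq\|w\|$ and $\lambda\geq\|w\|^{1/(1-\kappa)}$); and (ii) a second, time-dependent cutoff $K\sim\|w\|_{L^{12}}^{100}/\lambda_+^{99}$ is introduced so that the middle-frequency paraproduct $w\para\pP_{\lambda,K}\tilde X$ can be estimated and absorbed. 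This two-scale structure is the core novelty and has no counterpart in your proposal. Beyond the one-step energy estimate, the paper upgrades a stopping-time bound to a deterministic-time bound via a strong-Markov iteration (Proposition~\ref{pr:exponential_decay_general}); for the higher moments it introduces a modified Lyapunov functional $V^{(N)}_\alpha$ imposing additional regularity on the rough part, and crucially exploits a concentration effect (Lemma~\ref{le:Lp_concentration}) for $\hH_\lambda X$ when $\lambda$ is large, without which the stochastic constants would grow like $N^N\lambda^{6\kappa N}$ rather than the required $(CN)^{2N}$. None of these steps is recoverable from the approach as you have sketched it.
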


\begin{rmk}
	Here in \eqref{e:main_result1} the result is only proved for $t$ away from $0$, since our main goal is to study the long time behaviour of the solution. This allows us to avoid some technical problems caused by the rough part of the initial data, as it is not smoothened by the heat flow in short time. 
\end{rmk}
\begin{rmk}
\label{rmk:large_alpha}
	Our main estimate, Proposition~\ref{pr:main_bound}, actually does not require the smallness of $\alpha_0$. It is only used in Proposition~\ref{pr:decay_1} in order to get \eqref{e:decay_1}, which can be viewed as a stopping time version of \eqref{e:main_result1}. 
If $\alpha_0$ is large, the estimate in Proposition~\ref{pr:main_bound} should still yield a 
bound of the form \eqref{e:main_result1}, but with a possibly negative exponent $\gamma$. 
It is unclear to us whether this hints at a genuine transition between existence and
non-existence of an invariant measure at large values of $\alpha_0$ or, as is more likely,
whether it just reflects a limitation of our current proof technique. 
\end{rmk}

If the driving noise is non-degenerate, the uniqueness of the invariant measure follows from the strong Feller property and full support of the equation. The strong Feller property for singular SPDEs was studied in \cite{HM18} and  extended in \cite{ZZ17} to the 2D stochastic Navier--Stokes equations with space-time white noise. Although our noise is not exactly space-time white noise, the strong Feller property does hold under a natural non-degeneracy condition following the exact same argument. 
(We also provide a short proof in the last section since \cite{ZZ17} mainly focuses on the 3D case.) The support theorem is well studied in \cite{HS22a} for a very general class of singular SPDEs. In \cite{CF18} and \cite{TW18} simpler cases such as gPAM and $\Phi^4_2$ are treated. Since no renormalization constant appears in \eqref{e:SNS}, the support theorem can be proven relatively easily. Combining these with the decay of Lyapunov function in \eqref{e:main_result1}, we have the following exponential mixing result.
\begin{thm}
\label{thm:exp_mixing}
    If the noise satisfies Assumption~\ref{as:noise} with $\limsup_{k\rightarrow \infty}|\phi_k| \leq \frac{1}{2}\alpha_0$ and
    \begin{equ}
    \label{e:noise_irreducible}
        \inf_{k\neq 0} |\phi_k| > 0\;,
    \end{equ}
    then \eqref{e:SNS} has exactly one invariant measure $\mu_\star$ on $\cC^{-\kappa}$. Moreover, there exist some constants $C, \Tilde{\gamma}, T_* > 0$ such that 
    \begin{equ}
    \label{e:exp_mixing}
    	\|\pP_t(x, \cdot) - \mu_\star \|_{TV} \leq C\big(1 + V_{2\alpha_0}(x)\big)e^{-\Tilde{\gamma} t}
    \end{equ}
    for any $t \geq T_*$ and $x\in \cC^{-\kappa}$, and $\mu_\star$ satisfies the tail bound
\begin{equ}
\label{e:exp_tail}
	\mu_\star(V_{2\alpha_0}(x) \geq K) \leq C\exp(-C^{-1}\sqrt{K})\;.
\end{equ}

    \end{thm}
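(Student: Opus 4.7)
The plan is to deduce Theorem~\ref{thm:exp_mixing} from Theorem~\ref{thm:main_result} by combining the Lyapunov bound \eqref{e:main_result1} with two additional ingredients: the strong Feller property of $\pP_t$ under Assumption~\ref{as:noise} with $\inf_k|\phi_k|>0$, established following \cite{HM18,ZZ17}, and topological irreducibility, i.e.\ that $\pP_t(x,U)>0$ for every nonempty open $U\subset\cC^{-\kappa}$ and every $x\in\cC^{-\kappa}$, which will be obtained from the general support results of \cite{HS22a} (the argument simplifies here because \eqref{e:SNS} involves no renormalisation constant). Once these are available, the whole conclusion is extracted from a Harris-type argument in the spirit of Hairer--Mattingly. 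Existence of $\mu_\star$ is already provided by Theorem~\ref{thm:main_result} via Krylov--Bogoliubov: the Lyapunov bound yields time-averaged measures with uniformly bounded $V_{2\alpha_0}$-integral, and the sublevel sets $\{V_{2\alpha_0}\le R\}$ become tight in $\cC^{-\kappa}$ after one application of $\pP_1$, thanks to the regularisation of the heat flow on the smooth part of the decomposition.

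The decisive step is to upgrade the strong Feller plus irreducibility pair into a uniform minorisation on Lyapunov sublevel sets. Fix any open neighbourhood $U$ of the origin in $\cC^{-\kappa}$. The support theorem yields $\pP_T(0,U)>0$ for every $T>0$, and the strong Feller property gives continuity of $x\mapsto \pP_T(x,U)$. I would then argue that for $T_*$ sufficiently large and $R_*$ sufficiently large (chosen so that $R_*>2C/(1-Ce^{-\gamma T_*})$ in the notation of \eqref{e:main_result1}), one has
\begin{equ}
\inf_{V_{2\alpha_0}(x)\le R_*}\pP_{T_*}(x,U)\ge \eps
\end{equ}
for some $\eps>0$. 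The uniformity over $x$ in the sublevel set is the main obstacle, since the set is not compact in $\cC^{-\kappa}$. I would handle this by first propagating one time step with the Lyapunov bound to land in a tight subset of $\cC^{-\kappa}$ (using the smoothing on the smooth component and the $\cC^{-\kappa}$ bound on the rough component), then covering by finitely many balls on which strong Feller transfers the pointwise positivity provided by irreducibility into a uniform lower bound.

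With the minorisation in hand, the weighted-TV Harris theorem applied to $\pP_{T_*}$ produces some $\bar\gamma<1$ and $\beta>0$ such that $\|\pP_{nT_*}(x,\cdot)-\mu_\star\|_\beta\le C\bar\gamma^{\,n}(1+V_{2\alpha_0}(x))$ in the weighted norm $\|\mu\|_\beta=\int(1+\beta V_{2\alpha_0})\,d|\mu|$, which dominates the ordinary TV norm. Interpolating over $t\in[nT_*,(n+1)T_*]$ using \eqref{e:main_result1} to control $\pP_t V_{2\alpha_0}$ for bounded $t$ yields \eqref{e:exp_mixing} with $\tilde\gamma=-T_*^{-1}\log\bar\gamma$. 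Uniqueness of the invariant measure is automatic from this contraction, recovering the first assertion of the theorem.

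For the tail bound \eqref{e:exp_tail}, I apply \eqref{e:main_result2} at the smooth initial datum $x=0$ to obtain $(\pP_t V_{2\alpha_0}^N)(0)\le (CN)^{2N}$ uniformly in $t\ge 0$. Since $V_{2\alpha_0}$ is lower semicontinuous on $\cC^{-\kappa}$ (being defined as an infimum in \eqref{e:Lyapunov_func}), Fatou's lemma together with the Krylov--Bogoliubov construction of $\mu_\star$ gives $\int V_{2\alpha_0}^N\,d\mu_\star\le (CN)^{2N}$ for every $N\in\N^*$. Markov's inequality then yields
\begin{equ}
\mu_\star\bigl(V_{2\alpha_0}\ge K\bigr)\le \frac{(CN)^{2N}}{K^N}=\Bigl(\frac{C^2 N^2}{K}\Bigr)^{N},
\end{equ}
and optimising in $N\sim\lfloor\sqrt{K}/(eC)\rfloor$ produces the stretched exponential bound $C\exp(-C^{-1}\sqrt{K})$. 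Once the uniform minorisation is established, everything else reduces to standard Harris machinery and elementary optimisation, so the uniform small-set property on Lyapunov sublevel sets is the hard part I expect to carry most of the technical weight.
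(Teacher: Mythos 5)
Your proposal takes essentially the same route as the paper: strong Feller (following \cite{HM18,ZZ17}), a support theorem, a uniform minorisation on Lyapunov sublevel sets obtained by first smoothing into a compact subset of $\cC^{-\kappa}$ and then using total-variation continuity, and finally a Harris-type theorem together with the Lyapunov bound \eqref{e:main_result1}. Two remarks on the places where you leave details out. For the tail bound \eqref{e:exp_tail}, the paper transfers the moment bound $(P_t V^N_{2\alpha_0})(0)\le(CN)^{2N}$ to $\mu_\star$ directly by combining it with the exponential mixing estimate \eqref{e:exp_mixing} and Markov's inequality, which sidesteps your Fatou/Krylov--Bogoliubov argument entirely; your route would also work, but the stated justification for lower semicontinuity of $V_{2\alpha_0}$ (``being defined as an infimum'') is not in itself a valid reason --- infima of lower semicontinuous functions are not lower semicontinuous in general --- and the correct justification needs a weak-$L^2$ compactness argument on the smooth component of near-optimal decompositions. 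More substantively, the compactness step is not a mere heat-flow regularisation: the smooth part of the decomposition at time $0$ is only controlled in $L^2$, and upgrading it to $H^\kappa$ (hence to $\cC^{-\kappa/2}\hookrightarrow\hookrightarrow\cC^{-\kappa}$) after a short time requires a genuine nonlinear energy estimate on the remainder equation (the paper's estimate \eqref{e:bwl_bound}, which in turn uses an ansatz closer to \cite{HR24} than to the one used for Proposition~\ref{pr:main_bound}). You correctly flag this as carrying most of the technical weight, but ``using the smoothing on the smooth component'' understates what is needed there.
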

\begin{rmk}
	Condition~\eqref{e:noise_irreducible} is a technical assumption guaranteeing that the noise is not degenerate. See \cite{HM06} for the case of a smooth but degenerate noise, where the existence of an invariant measure is easy to prove but the question of its uniqueness is highly nontrivial.
\end{rmk}

\subsection{Notations}
We try to keep notations as close as possible to \cite{HR24}. We identify $\M^d$, the space of $d\times d$ matrices with $\R^d \otimes \R^d$ in the usual way. Denote $u\otimes_s v = \frac{1}{2}(u \otimes v + v\otimes u)$. For $\varphi \in C^1(\T^2; \M^2)$ and $j = 1,2$, we set
\begin{equ}
	\div(\varphi)_j (x) = \bigl(\div(\varphi)(x)\bigr)_j = \sum_{i=1}^{2} \d_i \varphi_{i,j}(x) \in C(\T^2; \R)\;.
\end{equ}
For $\varphi \in C^1(\T^2; \R^2)$, we define $\grad \varphi, \grads \varphi \in C(\T^2; \M^2)$ by
\begin{equ}
	(\grad \varphi)_{i,j} = \d_i \varphi_j \;, \qquad (\grads\varphi)_{i,j} = \frac{1}{2} (\d_i \varphi_j + \d_j \varphi_i)\;.
\end{equ}
Here $C$ and $C^1$ denote the spaces of continuous and continuously differentiable functions, respectively. We use the convention $\|\cdot\| = \|\cdot\|_{L^2}$ throughout the paper. We use $A\lesssim B$ to denote that there exists some constant $C$ such that $A\leq C B$. We write $A\lesssim_{\kappa} B$ to emphasise that the constant may depend on $\kappa$. The constants $C$ may vary from line to line to simplify notations. We use $a \wedge b$ and $a\vee b$ to denote $\min\{a, b\}$ and $\max\{a, b\}$ respectively.

Denote the space of Schwartz distributions by $\sS'(\T^2; \R^d)$. For every $\varphi \in \sS'(\T^2; \R^d)$ and $k\in \Z^2$, define the Fourier transform to be 
\begin{equ}
	 \hat{\varphi}(k) = \fF\varphi(k) := \int _{\T^2} e^{-2\pi ik\cdot x} \varphi(x) \md x\;.
\end{equ}
Denote the space of mean-free Schwartz distributions by
\begin{equ}
	\sS'_{\times}(\T^2; \R^d) := \{\varphi \in \sS'(\T^2; \R^d): \hat{\varphi}(0) = 0\}\;.
\end{equ}
Take a partition of unity $\chi$ and $\rho_j = \rho(2^{-j} \cdot)$ as in \cite[Proposition~2.10]{BCD11} with the convention $\rho_{-1} = \chi$. 
For any $j \geq -1$, define the Littlewood--Paley projection 
\begin{equ}
\label{e:Littlewood_Paley}
	\Delta_j f = \fF^{-1}(\rho_j \fF f)\;.
\end{equ}
For $j \in \R_{+}$, we use the convention $\Delta_j f := \Delta_{\lceil j \rceil} f$ and, for 
$1\leq p \leq \infty$ and any $\varphi: \T^2 \rightarrow \R^d$ we write $\|\varphi\|_{L^p}$
for the usual $L^p$ norms.
Then for $\alpha \in \R$, $1\leq p, q \leq \infty$, we define the mean-free Besov spaces 
$\Besov{\alpha}{p,q}(\T^2; \R^d) \subset \sS'_{\times}(\T^2; \R^d)$ via norms
\begin{equ}[e:BesovNorm]
	\|\varphi\|_{\Besov{\alpha}{p,q}} := \Big(\sum_{j \geq -1} 2^{j\alpha q} \|\Delta_j \varphi \|_{L^p(\T^2; \R^d)}^q \Big)^{\frac{1}{q}}\;.
\end{equ}
We distinguish the case $p = q = 2$ by denoting 
\begin{equ}
	H^{\alpha}(\T^2; \R^d) = \Besov{\alpha}{2,2}(\T^2; \R^d)
\end{equ}
as well as the case $p = q = \infty$ by denoting 
\begin{equ}
	\cC^{\alpha}(\T^2; \R^d) = \Besov{\alpha}{\infty,\infty}(\T^2; \R^d)\;.
\end{equ}
Note that $\cC^0$ is not $L^{\infty}$ in this notation.
For any functional space $X$, we use $X_{\div}$ to denote the subspace of $X$ that consists of divergence free functions in $X$ (e.g. $\cC^{\alpha}_{\div}$\, , $L^2_{\div}$).

Given a time dependent measurable function $\varphi: [0,t] \rightarrow X$ for some Banach space $X$, define 
\begin{equ}
	\|\varphi\|_{L^p_t X} := \left(\int_{0}^{t} \|\varphi(s)\|_X^p \md s\right)^{\frac{1}{p}}\;.
\end{equ}
For a space-time function $\varphi: [0,t] \times \T^2 \rightarrow \R^d$, we use $\varphi[s]$ to denote the function $\varphi(s, \cdot)$. We let $(\dtLaplace)^{-1} \varphi$ be the unique solution $u: [0,t] \times \T^2 \rightarrow \R^d$ of the inhomogeneous heat equation
\begin{equ}
	(\dtLaplace) u = \varphi \,, \qquad u[0] = 0. 
\end{equ}  

\subsubsection*{Organisation of the article}

The rest of the article is organized as follows. In Section~\ref{sec:Ansatz} we discuss our strategy for the proof. In Section~\ref{sec:bound} we prove \eqref{e:main_result1} in Theorem~\ref{thm:main_result} to illustrate the main idea. In Section~\ref{sec:tail_bound} we sharpen the methods to prove \eqref{e:main_result2}. In Section~\ref{sec:exponential_mixing} we prove the exponential mixing result Theorem~\ref{thm:exp_mixing}.

\subsubsection*{Acknowledgements}

We are grateful to Bjoern Bringmann for having spotted an error in an earlier version
of the bound \eqref{e:large_freq1}.

\section{Preliminaries and strategies}
\label{sec:Ansatz}
\subsection{Preliminaries}
\begin{defn}
\label{def:paraproduct}
	For $\varphi, \psi \in \sS'(\T^2; \R^d)\,$, we define the high-low paraproduct
	\begin{equ}
		\varphi \para \psi := \sum_{j = -1}^{\infty} \sum_{i=-1}^{j-2}\Delta_i \varphi \otimes_s \Delta_j \psi\;, \qquad \varphi \rpara \psi := \psi \para \varphi\;.
	\end{equ}
	We also define the high-high paraproduct (also called resonant term) by 
	\begin{equ}
		\varphi \reso \psi = \sum_{|i-j|\leq 1} \Delta_i \varphi \otimes_s \Delta_j \psi\;.
	\end{equ}
	Sometimes we use $\varphi \pareq \psi := \varphi \para \psi + \varphi \reso \psi$ and $\varphi \rpareq \psi := \varphi \rpara \psi + \varphi \reso \psi$\,.
\end{defn}
The following estimates for paraproducts will be used repeatedly, see \cite[Theorems~2.82 and 2.85]{BCD11}.
\begin{lem}
\label{le:paraproduct}
	For $\alpha, \beta \in \R$ and $p, p_1, p_2, q, q_1, q_2 \in [1, \infty]$ such that 
	\begin{equ}
		\frac{1}{p} := \frac{1}{p_1} + \frac{1}{p_2}\;, \qquad \frac{1}{q} := \frac{1}{q_1} + \frac{1}{q_2}\;,
	\end{equ}
	the following bounds hold uniformly in $\varphi, \psi$.
	\begin{equs}
	\label{e:para1}
		\| \varphi \para \psi \|_{\Besov{\alpha}{p, q}} & \lesssim \| \varphi
    \|_{L^{p_1}} \| \psi \|_{\Besov{\alpha}{p_2, q}} \;, & &  \\ 
    \label{e:para2}
    \| \varphi \para \psi \|_{\Besov{\alpha+ \beta}{p, q}} &\lesssim \| \varphi
    \|_{\Besov{\alpha}{p_1, q_1}} \| \psi \|_{\Besov{\beta}{p_2, q_2}} \;, \ \ & &  \text{if} \ \ \alpha  <
    0 \;, \\
    \label{e:reso}
    \| \varphi \reso \psi \|_{\Besov{\alpha+ \beta}{p, q}} &\lesssim \| \varphi
    \|_{\Besov{\alpha}{p_1, q_1}} \| \psi \|_{\Besov{\beta}{p_2, q_2}}\;, \ \ & & \text{if} \ \ \alpha
    {+} \beta > 0 \;.
	\end{equs}
	We will call the first two inequalities high-low paraproduct estimate, and the third inequality resonance estimate.
\end{lem}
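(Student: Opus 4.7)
These are the standard Bony paraproduct estimates, so the strategy is to reduce each bound to a Littlewood--Paley block estimate and then take the weighted $\ell^q$ sum. The key input is that $\Delta_i \varphi \otimes_s \Delta_j \psi$ has Fourier support in a fixed annulus (resp.\ ball) determined by $\max(i,j)$: when $i \leq j-2$ the support lies in an annulus of scale $2^j$, so $\Delta_k(\Delta_i \varphi \otimes_s \Delta_j \psi) = 0$ unless $|k-j| \leq N_0$ for some fixed $N_0$; when $|i-j|\leq 1$ it lies in a ball of radius $\lesssim 2^j$, so $\Delta_k$ only picks up contributions with $j \geq k - N_0$.

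For \eqref{e:para1}, I would apply $\Delta_k$ to $\varphi \para \psi$, keeping only the $O(1)$ scales $j \approx k$, and use H\"older in $(p_1, p_2)$ together with the uniform bound
\begin{equ}
\Bigl\|\sum_{i \leq j-2} \Delta_i \varphi\Bigr\|_{L^{p_1}} \lesssim \|\varphi\|_{L^{p_1}}\;,
\end{equ}
which follows from the fact that $\sum_{i\leq j-2} \rho_i$ is a Schwartz multiplier uniformly in $j$. This gives $\|\Delta_k(\varphi \para \psi)\|_{L^p} \lesssim \|\varphi\|_{L^{p_1}} \|\Delta_k \psi\|_{L^{p_2}}$ up to shifts, after which the $\ell^q$-norm in $k$ produces the right-hand side.

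For \eqref{e:para2}, the same block decomposition gives
\begin{equ}
\|\Delta_k(\varphi \para \psi)\|_{L^p} \lesssim \sum_{j\approx k} \sum_{i \leq j-2} \|\Delta_i \varphi\|_{L^{p_1}} \|\Delta_j \psi\|_{L^{p_2}}\;.
\end{equ}
Writing $\|\Delta_i \varphi\|_{L^{p_1}} = 2^{-i\alpha} a_i$ with $\|a\|_{\ell^{q_1}} = \|\varphi\|_{\Besov{\alpha}{p_1,q_1}}$, the inner sum is a geometric series that, under the hypothesis $\alpha<0$, is dominated by its top index $i = j-2$ via H\"older in $i$, yielding $2^{-j\alpha} \tilde{a}_j$ with $\tilde{a} \in \ell^{q_1}$. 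Taking the weighted $\ell^q$ norm over $k$ and using H\"older in $k$ (with exponents $q_1, q_2$) gives the claim.

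For \eqref{e:reso}, I would use that $\Delta_k(\varphi \reso \psi)$ only sees scales $j \gtrsim k$, so
\begin{equ}
\|\Delta_k(\varphi \reso \psi)\|_{L^p} \lesssim \sum_{j \geq k-N_0} \|\Delta_j \varphi\|_{L^{p_1}} \|\Delta_j \psi\|_{L^{p_2}} \lesssim \sum_{j \geq k-N_0} 2^{-j(\alpha+\beta)} a_j b_j\;,
\end{equ}
and here the condition $\alpha+\beta > 0$ makes the series a discrete convolution with a decaying kernel, so Young's (or a dyadic H\"older) inequality produces $2^{-k(\alpha+\beta)} c_k$ with $c \in \ell^q$. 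I expect the only mild obstacle to be keeping track of the interplay between the two H\"older pairings (one in space and one in the frequency index) so that the exponents match in the end; everything else is bookkeeping, and the statement could ultimately be invoked directly from \cite[Thm.~2.82 and 2.85]{BCD11}.
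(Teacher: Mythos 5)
The paper offers no proof of this lemma at all; it simply invokes \cite[Theorems~2.82 and~2.85]{BCD11}, which is exactly what you note at the end of your sketch. Your block decomposition and the Fourier-support localisation (annulus for $\varphi\para\psi$, ball for $\varphi\reso\psi$) are the standard ingredients behind those theorems, and your treatment of \eqref{e:para1} and \eqref{e:reso} is correct.

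One step in your argument for \eqref{e:para2} is stated imprecisely. Writing $b_i := 2^{i\alpha}\|\Delta_i\varphi\|_{L^{p_1}}$, you need to control
\begin{equ}
\Big\| 2^{k\alpha}\sum_{i\le k-2}\|\Delta_i\varphi\|_{L^{p_1}}\Big\|_{\ell^{q_1}_k}
= \Big\|\sum_{m\ge 2} 2^{m\alpha}\,b_{k-m}\Big\|_{\ell^{q_1}_k}\;.
\end{equ}
Since $\alpha<0$, the kernel $\{2^{m\alpha}\}_{m\ge 2}$ lies in $\ell^1$, and \emph{Young's convolution inequality} on $\ell^{q_1}$ gives the bound by $\|b\|_{\ell^{q_1}}=\|\varphi\|_{\Besov{\alpha}{p_1,q_1}}$, as you correctly do for the resonance term. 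What you describe instead\dash ``dominated by its top index $i=j-2$ via H\"older in $i$''\dash bounds the inner sum by a \emph{constant} times $2^{-j\alpha}$, namely $2^{-j\alpha}\|\varphi\|_{\Besov{\alpha}{p_1,q_1}}$, so the resulting ``$\tilde a_j$'' is constant in $j$ and not in $\ell^{q_1}$. That route only yields the weaker estimate with $\|\psi\|_{\Besov{\beta}{p_2,q}}$ (with the outer index $q\le q_2$) on the right-hand side rather than $\|\psi\|_{\Besov{\beta}{p_2,q_2}}$ as claimed. The fix is to keep the $\ell^{q_1}$ structure via the convolution argument and only then apply H\"older in $k$ between the two $\ell^{q_1}$ and $\ell^{q_2}$ pieces. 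With that substitution your sketch matches the standard proof in the cited reference.
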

The following smoothing effect of the heat flow will also be used repeatedly.
\begin{lem}
	Let $\aA = \{x\in \R^2: \frac{3}{4} \leq |x| \leq \frac{8}{3}\}$ be an annulus. Then there exist constants $c, C>0$ such that for any $p \in [1,\infty]$ and $t > 0$ and $\lambda \ge 1$, we have, for all $u$ with $\supp \hat{u} \subset \lambda \aA$, the bounds
	\begin{equs}
	\label{e:heat_flow1}
		  \|e^{t\Laplace} u\|_{L^p} &\leq Ce^{-ct\lambda^2}\|u\|_{L^p}\;;\\
	\label{e:heat_flow3}
		\|e^{t\Delta}u - u \|_{L^p} &\leq C t\lambda^{2} \|u\|_{L^p}\;.
	\end{equs}
	As a result, for any $\alpha, \beta \in \R$ and $1\leq p, q\leq \infty$\,, we have 
	\begin{equ}
	\label{e:heat_flow2}
		\|e^{t\Delta} u\|_{\Besov{\alpha + \beta}{p,q}} \lesssim_{\beta} t^{-\beta/2} \|u\|_{\Besov{\alpha}{p,q}}\;.
	\end{equ}
\end{lem}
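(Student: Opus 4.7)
The plan is to derive all three bounds from an $L^1$ estimate on the convolution kernel of $e^{t\Delta}$ localised to frequencies of size $\lambda$, then apply Young's convolution inequality. Pick a smooth cutoff $\psi \in C_c^\infty(\R^2)$ that equals $1$ on $\aA$ and is supported in a slightly larger annulus still bounded away from the origin. Since $\hat{u}$ is supported in $\lambda\aA$, one has $e^{t\Delta}u = h_{t,\lambda} * u$ where $h_{t,\lambda}$ has Fourier multiplier $e^{-4\pi^2|\xi|^2 t}\psi(\xi/\lambda)$. The rescaling $\xi = \lambda\eta$ reduces the problem to bounding the $L^1$ norm of the scale-invariant kernel
\begin{equ}
g_s(y) \;:=\; \int_{\R^2} e^{2\pi i y\cdot\eta}\, e^{-4\pi^2 s|\eta|^2}\,\psi(\eta)\,\md\eta\;, \qquad s := t\lambda^2\;,
\end{equ}
since $\|h_{t,\lambda}\|_{L^1} = \|g_s\|_{L^1}$ after undoing the rescaling (the torus version follows by majorising the periodic kernel by its full-space counterpart).

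The core claim is $\|g_s\|_{L^1} \leq C e^{-cs}$ uniformly for $s \geq 0$. I would prove this by integrating by parts in $\eta$ via the identity $(1-\Delta_\eta)^N e^{2\pi i y\cdot\eta} = (1+4\pi^2|y|^2)^N e^{2\pi i y\cdot\eta}$. On $\supp\psi$ we have $|\eta|^2 \geq c_0 > 0$, so every derivative falling on $e^{-4\pi^2 s|\eta|^2}$ produces at most a polynomial-in-$s$ factor times $e^{-4\pi^2 s|\eta|^2}$. Splitting off a fraction of this exponential yields $e^{-cs}$ outside the integral, with the polynomial factors absorbed. Taking $N$ large enough and integrating in $y$ gives the decay, and Young's inequality then yields \eqref{e:heat_flow1}.

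For \eqref{e:heat_flow3} I would repeat the argument with symbol $(e^{-4\pi^2|\xi|^2 t} - 1)\psi(\xi/\lambda)$, using the elementary bound $|e^{-a}-1| \leq a$ for $a \geq 0$ to trade the exponential decay for a linear factor $t\lambda^2|\eta|^2$, which is bounded on $\supp\psi$. The same integration by parts then gives an $L^1$ bound of size $Ct\lambda^2$ on the associated kernel, and Young's inequality concludes.

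Finally, \eqref{e:heat_flow2} follows block-by-block. Since $\Delta_j$ and $e^{t\Delta}$ commute and $\Delta_j u$ has Fourier support in an annulus of size $2^j$, \eqref{e:heat_flow1} gives $\|e^{t\Delta}\Delta_j u\|_{L^p} \lesssim e^{-ct 2^{2j}}\|\Delta_j u\|_{L^p}$. Multiplying by $2^{j(\alpha+\beta)}$ and invoking the elementary inequality $\sup_{x\geq 0} x^{\beta/2}e^{-cx/2} < \infty$ (for $\beta \geq 0$) extracts the factor $t^{-\beta/2}$, while a residual $e^{-ct 2^{2j}/2}$ weight ensures $\ell^q$-summability; the triangle inequality then yields \eqref{e:heat_flow2}. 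For $\beta < 0$ the bound can be obtained from blockwise $L^p$-contractivity of $e^{t\Delta}$. I do not expect any serious obstacle here; the only mildly delicate point is bookkeeping the polynomial-in-$s$ factors produced by the integration by parts, but these are dominated by the exponential at no cost since $\psi$ is supported away from the origin.
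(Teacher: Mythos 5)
Your proof is correct and follows essentially the same route as the paper: both reduce the torus estimates to an $L^1$ bound on the convolution kernel over $\R^2$ (you prove it via the standard integration-by-parts argument, while the paper cites Lemmas~2.10--2.11 and Proposition~A.3 of Mourrat--Weber) and transfer to $\T^2$ by periodisation and Young's inequality, then obtain \eqref{e:heat_flow2} blockwise from \eqref{e:heat_flow1}. One minor point: your remark that blockwise $L^p$-contractivity handles $\beta<0$ is not quite right, since contractivity only gives a bound $\lesssim 1$ whereas $t^{-\beta/2}\to 0$ as $t\to 0$ when $\beta<0$, so \eqref{e:heat_flow2} as written really requires $\beta\geq 0$ (the paper's own proof has the same implicit restriction, and the lemma is only ever applied with positive $\beta$).
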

\begin{proof}
	Without loss of generality we assume $u$ is scalar-valued. 
	The bounds~\eqref{e:heat_flow1} and \eqref{e:heat_flow3} are the content of \cite[Lemma~2.10 and 2.11]{MW17a}. However, the bound there is on full space $\R^2$ instead of $\T^2$. We briefly discuss how to fill this gap. We only present the proof for \eqref{e:heat_flow3}.
	First, by Young's convolution inequality and the fact that the Fourier transform of $u$ is
	supported in the ball of radius $\lambda$, we have
	\begin{equ}
		\|e^{t\Delta} u - u\|_{L^p} = \| \psi_{t, \lambda} *u\|_{L^p} \leq \|\psi_{t, \lambda}\|_{L^1}\|u\|_{L^p}\;,
	\end{equ}
	where $\psi$ is the function with Fourier transform $\Hat{\psi}_{t, \lambda}(k) = \phi(k/\lambda)\bigl(e^{-t|k|^2} - 1\bigr)$ and $\phi$ is some symmetric smooth function with compact support, such that $\phi(x) = 1$ if $|x| \leq 1$. 
	Let $g_{t, \lambda}: \R^2 \rightarrow \R$ be a function with (continuous) Fourier transform 
	\begin{equ}
		\Hat{g}_{t, \lambda}(\xi) = \phi(\xi / \lambda)\bigl(e^{-t|\xi|^2} - 1\bigr)\;,
	\end{equ}
	then $\psi_{t, \lambda}$ is the periodisation of $g_{t, \lambda}$, i.e. we have 
	\begin{equ}
		\psi_{t, \lambda}(x) = \sum_{n \in \Z^2} g_{t, \lambda} (x+n)\;.
	\end{equ}
	A slight modification of \cite[Proposition A.3]{MW17a} gives that $\|g_{t, \lambda}\|_{L^1(\R^2)} \leq C t\lambda^2$. Then we get
	\begin{equ}
		\|\psi_{t,\lambda}\|_{L^1(\T^2)} \leq \sum_{n \in \Z^2} \int_{\T^2} |g_{t, \lambda}(x + n)| \md x \leq C t \lambda^2\;,
	\end{equ} 
	and the result follows.
	As for \eqref{e:heat_flow2}, by \eqref{e:heat_flow1} we get
	\begin{equ}
		\|\Delta_j e^{t\Delta}u\|_{L^p} = \|e^{t\Delta} \Delta_j u\|_{L^p} \lesssim e^{-ct 2^{2j}} \|\Delta_j u\|_{L^p} \lesssim_{\beta} t^{-\beta/2} 2^{-\beta j} \|\Delta_j u\|_{L^p}\;.
	\end{equ}
	Then \eqref{e:heat_flow2} follows from the definition \eqref{e:BesovNorm} of the Besov norms. 
\end{proof}
An immediate corollary is the following maximal regularity estimate.
\begin{prop}
    There exists some constant $C$, such that if $u$ solves 
    \begin{equ}
        \d_t u = \Laplace u + \varphi\;, \quad u[0] =0\;,
    \end{equ}
    then for any $p \in [1,\infty]$, $q \in [1, \infty)$, $s\in \R$ and $0 < T \leq 1$ we have
    \begin{equs}
    \label{e:Schauder1}
        \sup_{t\in [0,T]}\|u[t]\|_{\Besov{s+2}{p,\infty}} &\leq C \sup_{t\in [0,T]}\|\varphi[t]\|_{\Besov{s}{p,\infty}}\;.\\
    \label{e:Schauder2}
        \int_0^T \|u[t]\|_{\Besov{s+2}{p, q}}^q \md t &\leq C^q \int_0^T \|\varphi[t]\|_{\Besov{s}{p, q}}^q \md t \;.
    \end{equs}
\end{prop}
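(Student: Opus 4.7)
The plan is to reduce both inequalities to the Duhamel representation
\begin{equ}
u[t] = \int_0^t e^{(t-s)\Laplace}\varphi[s]\,\md s\;,
\end{equ}
apply $\Delta_j$ to it, and then exploit the sharp heat-smoothing bound \eqref{e:heat_flow1} applied to the annulus-supported functions $\Delta_j \varphi[s]$ for $j \geq 0$, together with a separate elementary argument for $j = -1$.

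\textbf{Step 1: Duhamel and frequency localisation.} Since $\Delta_j$ commutes with $e^{(t-s)\Laplace}$, for any $j \geq 0$ the Fourier support of $\Delta_j \varphi[s]$ lies in an annulus $2^j \aA$, so \eqref{e:heat_flow1} gives
\begin{equ}
\|\Delta_j u[t]\|_{L^p} \leq \int_0^t \|e^{(t-s)\Laplace}\Delta_j \varphi[s]\|_{L^p}\md s \leq C \int_0^t e^{-c(t-s)2^{2j}}\|\Delta_j \varphi[s]\|_{L^p}\md s\;.
\end{equ}
For $j = -1$, using that $e^{t\Laplace}$ is an $L^p$-contraction, we just get $\|\Delta_{-1} u[t]\|_{L^p} \leq \int_0^t \|\Delta_{-1}\varphi[s]\|_{L^p}\md s$, which is harmless because $T \leq 1$.

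\textbf{Step 2: The sup--sup bound.} For $j \geq 0$, bounding $\|\Delta_j \varphi[s]\|_{L^p}$ by $\sup_{s \in [0,T]}\|\Delta_j \varphi[s]\|_{L^p}$ and computing the resulting time integral yields
\begin{equ}
2^{j(s+2)}\|\Delta_j u[t]\|_{L^p} \leq \frac{C}{c}\,2^{js}\,\sup_{r\in[0,T]}\|\Delta_j \varphi[r]\|_{L^p} \leq \frac{C}{c}\sup_{r\in [0,T]}\|\varphi[r]\|_{\Besov{s}{p,\infty}}\;.
\end{equ}
The $j = -1$ piece is controlled by $T\,\sup_r \|\Delta_{-1}\varphi[r]\|_{L^p}$, which is even better. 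Taking the supremum in $j$ and then in $t$ gives \eqref{e:Schauder1}.

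\textbf{Step 3: The $L^q_t$ bound via Young's inequality.} For \eqref{e:Schauder2}, observe that the bound in Step~1 expresses $\|\Delta_j u[t]\|_{L^p}$ as the convolution on $\R_+$ of $f_j(s) := \|\Delta_j \varphi[s]\|_{L^p}\mathbf{1}_{[0,T]}(s)$ with the $L^1$ kernel $K_j(r) := Ce^{-cr 2^{2j}}\mathbf{1}_{r \geq 0}$, whose $L^1$ norm is $\leq C'2^{-2j}$ for $j \geq 0$. By Young's convolution inequality $L^1 * L^q \to L^q$,
\begin{equ}
\Big(\int_0^T \|\Delta_j u[t]\|_{L^p}^q\md t\Big)^{1/q} \leq C' 2^{-2j} \Big(\int_0^T \|\Delta_j \varphi[s]\|_{L^p}^q\md s\Big)^{1/q}\;.
\end{equ}
Multiplying by $2^{j(s+2)}$, raising to the power $q$, summing over $j \geq 0$, and swapping sum and integral (monotone convergence) produces $\int_0^T \|\varphi[t]\|_{\Besov{s}{p,q}}^q\md t$ on the right-hand side; the $j=-1$ term is again handled using $T \leq 1$ and that the kernel has $L^1$ norm bounded by~$T$.

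\textbf{Expected difficulty.} The argument is essentially mechanical once \eqref{e:heat_flow1} is in hand; the only delicate point is keeping track of the low-frequency block $j=-1$, for which the exponential damping is absent, and making sure the constants produced by Young's inequality are independent of~$q$ so that $C^q$ in \eqref{e:Schauder2} involves a $q$-independent base constant. Both are resolved cleanly by the restriction $T \leq 1$.
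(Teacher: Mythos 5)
Your proof is correct and follows essentially the same route as the paper: Duhamel, frequency localisation, the heat-smoothing bound \eqref{e:heat_flow1}, and a convolution estimate in time; your appeal to Young's inequality $L^1 * L^q \to L^q$ is exactly what the paper re-derives by hand via H\"older plus Fubini. The separate treatment of the $j=-1$ block is harmless but actually unnecessary here, since the Besov spaces in this paper are mean-free (built on $\sS'_\times$), so $\Delta_{-1}$ also has Fourier support in a fixed annulus away from zero and enjoys the same exponential decay from \eqref{e:heat_flow1}.
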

\begin{proof}
    Combining Duhamel's formula $u[t] = \int_0^t e^{(t-s)\Laplace}\varphi[s]ds$ with \eqref{e:heat_flow1}, we have
    \begin{equ}
        \|\Delta_i u[t]\|_{L^p} \lesssim \sup_{s\in [0,t]} \|\Delta_i \varphi[s]\|_{L^p}\int_0^t e^{-c(t-s)2^{2i}}ds  \lesssim \frac{1}{2^{2i}} \sup_{s\in [0,t]} \|\Delta_i \varphi[s]\|_{L^p}\;,
    \end{equ}
and \eqref{e:Schauder1} follows from the definition of Besov norms \eqref{e:BesovNorm}. For \eqref{e:Schauder2}, we use \eqref{e:heat_flow1} to get 
\begin{equs}
	\int_0^T \|u[t]\|_{\Besov{s+2}{p, q}}^q \md t &= \sum_{j \geq -1} 2^{(s+2)jq} \int_0^T \|\Delta_j u[t]\|_{L^p}^q \md t \\
	&\leq C^q \sum_{j \geq -1} 2^{(s+2)jq} \int_0^T \Big(\int_0^t e^{-c(t-r)2^{2j}}\|\Delta_j \varphi[r]\|_{L^p} \md r \Big)^q \md t \;.
\end{equs}
By H\"older's inequality, we have 
\begin{equ}
	\Big(\int_0^t e^{-c(t-r)2^{2j}}\|\Delta_j \varphi[r]\|_{L^p} \md r \Big)^q \leq C^q 2^{-2j (q-1)} \int_0^t e^{-c(t-r)2^{2j}}\|\Delta_j \varphi[r]\|_{L^p}^q \md r\;.
\end{equ}
Therefore, by Fubini we get 
\begin{equs}
	\int_0^T \|u[t]\|_{\Besov{s+2}{p, q}}^q \md t &\leq C^q \sum_{j \geq -1} 2^{(s+2)jq}\cdot 2^{-2j (q-1)} \int_0^T \Big(\int_r^T e^{-c(t-r)2^{2j}} \md t \Big) \|\Delta_j \varphi[r]\|_{L^p}^q \md r \\
	&\leq C^q \sum_{j \geq -1} 2^{sjq} \int_0^T \|\Delta_j \varphi[r]\|_{L^p}^q \md r = C^q \int_0^T \|\varphi[r]\|_{\Besov{s}{p, q}}^q \md r\;.
\end{equs}
The result is proved.
\end{proof}
We define the high-low frequency projections as follows.
\begin{defn}\label{def:Hlambda}
	For any $\lambda > 0$, set $\hH_{\lambda,i} = \Delta_{i + \log^{+}_{2}(\lambda)}$ and define the projections 
	\begin{equs}
		\hH_{\lambda}: \sS'(\T^2; \R^2) &\rightarrow \sS'(\T^2; \R^2)\;, &\quad \lL_{\lambda}: \sS'(\T^2; \R^2) &\rightarrow \sS(\T^2; \R^2)\;,\\
		f & \mapsto  \sum_{i \geq 0} \hH_{\lambda,i} f \;, & f &\mapsto f - \hH_{\lambda} f\;.
	\end{equs}
	We write 
	\begin{equ}
		\pP_{\lambda, K} = \one_{\lambda \le K} (\hH_\lambda - \hH_K)
	\end{equ}
	to denote the frequency part between $\lambda$ and $K$. 
\end{defn}

The bounds on Besov norms for these projections follow from the following bound for Littlewood--Paley projection from \cite[Lemma~2.1]{BCD11}.
\begin{lem}
	There exists a constant $C$ such that for any $p \in [1,\infty]$\,, $\alpha, \beta \in \R$\,, $j \geq 0$ and $\varphi \in \sS'(\T^2; \R^2)\,$, we have
	\begin{equ}	\label{e:LP_estimate}
		\|\Delta_j \varphi \|_{\Besov{\beta}{p,\infty}} \leq C^{\beta - \alpha + 1} 2^{j(\beta - \alpha)} \|\Delta_j \varphi \|_{\Besov{\alpha}{p,\infty}}\;.
	\end{equ}
\end{lem}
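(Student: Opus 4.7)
The statement is a purely harmonic–analytic fact about functions whose Fourier support is localised to a single dyadic annulus. The plan is to use the key feature that, for a fixed $j$, the Fourier support of $\Delta_j \varphi$ lies in the support of $\rho_j$, which is an annulus of size $2^j$ (or a ball, when $j = -1$), so that all Littlewood--Paley blocks $\Delta_k (\Delta_j \varphi)$ vanish except for $|k-j| \leq 1$.

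\textbf{Step 1 (reduce to a bounded number of blocks).} Expand the left-hand side as
\begin{equ}
\|\Delta_j \varphi\|_{\Besov{\beta}{p,\infty}} = \sup_{k \geq -1} 2^{k\beta} \|\Delta_k \Delta_j \varphi \|_{L^p}\;.
\end{equ}
By construction of the partition of unity, $\rho_k \rho_j \equiv 0$ whenever $|k-j| \geq 2$, so $\Delta_k \Delta_j \varphi = 0$ in that range. The supremum thus reduces to at most three values $k \in \{j-1, j, j+1\} \cap [-1, \infty)$, on which $2^{k\beta} \leq 2^{|\beta|}\, 2^{j\beta}$.

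\textbf{Step 2 (relate back to the $\Besov{\alpha}{p,\infty}$ norm).} For each of these $k$, I would bound $\|\Delta_k \Delta_j \varphi\|_{L^p} \lesssim \|\Delta_j \varphi\|_{L^p}$ using Young's convolution inequality, since $\Delta_k$ is convolution with a kernel of bounded $L^1$ norm. It then remains to control $\|\Delta_j \varphi\|_{L^p}$ by $\|\Delta_j \varphi\|_{\Besov{\alpha}{p,\infty}}$. For this I would use the reproducing identity
\begin{equ}
\Delta_j \varphi = \sum_{|i-j|\leq 1} \Delta_i (\Delta_j \varphi)\;,
\end{equ}
which holds because $\sum_i \rho_i \equiv 1$ and only three terms survive, and estimate
\begin{equ}
\|\Delta_j \varphi\|_{L^p} \leq \sum_{|i-j|\leq 1} \|\Delta_i (\Delta_j \varphi) \|_{L^p} \leq 2^{|\alpha|}\, 2^{-j\alpha} \sup_{|i-j|\leq 1} 2^{i\alpha}\|\Delta_i (\Delta_j \varphi) \|_{L^p} \leq 2^{|\alpha|}\, 2^{-j\alpha}\, \|\Delta_j \varphi\|_{\Besov{\alpha}{p,\infty}}\;.
\end{equ}
Putting the two estimates together yields the advertised bound with a constant of the form $C^{|\beta - \alpha| + 1}$ (in the useful regime $\beta \geq \alpha$, this is the stated $C^{\beta - \alpha + 1}$; the direction $\beta < \alpha$ is trivial since $2^{j(\beta-\alpha)} \leq 1$ and the norms are directly comparable).

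\textbf{Main obstacle.} There is no substantial difficulty here; this is essentially Bernstein's inequality rephrased for Besov norms. The only care needed is the edge case $j \in \{-1, 0\}$, where the relevant Fourier support is a ball rather than an annulus, but the support property $\rho_k \rho_j = 0$ for $|k-j|\geq 2$ still holds and the same argument goes through verbatim. The constant $C$ depends only on the fixed partition of unity $\{\rho_i\}$, so the dependence on $\alpha, \beta$ enters only through the exponent $|\beta - \alpha|$ tracked in Step 2.
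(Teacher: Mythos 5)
The paper does not prove this lemma itself; it simply cites \cite[Lemma~2.1]{BCD11} (Bernstein's inequalities), so the only thing to assess here is the correctness of your argument.

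Your overall approach — expand both Besov norms as suprema over Littlewood--Paley blocks, use that $\Delta_k\Delta_j = 0$ for $|k-j|\geq 2$, and compare the surviving blocks — is the right one. However, there is a genuine gap in the constant bookkeeping. Routing through $\|\Delta_j\varphi\|_{L^p}$ means you pick up a factor $2^{|\beta|}$ in Step 1 (from $2^{k\beta}\leq 2^{|\beta|}2^{j\beta}$) and a separate factor $2^{|\alpha|}$ in Step 2 (from $2^{-i\alpha}\leq 2^{|\alpha|}2^{-j\alpha}$). Putting the two estimates together therefore yields the constant $C\,2^{|\alpha|+|\beta|}$, not $C^{|\beta-\alpha|+1}$ as you claim: for instance with $\alpha=\beta=M$ your argument gives $2^{2M}$, whereas the lemma requires a constant independent of $\alpha,\beta$ when $\beta=\alpha$. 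The quantity $|\alpha|+|\beta|$ only bounds $|\beta-\alpha|$ from above, not below, so your last sentence ("yields the advertised bound with a constant of the form $C^{|\beta-\alpha|+1}$") does not follow from the preceding estimates.

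The fix is to avoid the $L^p$ detour entirely and compare the two Besov norms block-by-block. Since $\Delta_k\Delta_j\varphi = 0$ unless $|k-j|\leq 1$, for each surviving $k$ one writes
\begin{equ}
2^{k\beta}\|\Delta_k\Delta_j\varphi\|_{L^p}
= 2^{k(\beta-\alpha)}\cdot 2^{k\alpha}\|\Delta_k\Delta_j\varphi\|_{L^p}
\leq 2^{k(\beta-\alpha)}\,\|\Delta_j\varphi\|_{\Besov{\alpha}{p,\infty}}\;,
\end{equ}
and then $2^{k(\beta-\alpha)}\leq 2^{|\beta-\alpha|}\,2^{j(\beta-\alpha)}$ for $|k-j|\leq 1$. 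Taking the sup over $k$ gives $\|\Delta_j\varphi\|_{\Besov{\beta}{p,\infty}}\leq 2^{|\beta-\alpha|}\,2^{j(\beta-\alpha)}\,\|\Delta_j\varphi\|_{\Besov{\alpha}{p,\infty}}$ directly, with the constant depending only on the difference, which is what the $C^{\beta-\alpha+1}$ form encodes in the regime $\beta\geq\alpha$ the paper cares about. (You are right that for $\beta<\alpha$ the exponent $\beta-\alpha+1$ can even be negative; the lemma is applied in the paper only with $\beta>\alpha$, e.g.\ $\alpha=-\kappa$, $\beta=\kappa$.) Also note that Step 2's sum-to-sup step silently drops a factor of $3$; harmless, but worth stating if you are tracking constants this carefully.
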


The following Besov embedding will also be used frequently, see \cite[Proposition~2.71]{BCD11}
\begin{lem}
	Let $1 \leq p_1 \leq p_2 \leq \infty$ and $1 \leq r_1 \leq r_2 \leq \infty.$ Then, for any $s\in \R$, we have $\Besov{s}{p_1, r_1} \hookrightarrow \Besov{s - d\big(\frac{1}{p_1} - \frac{1}{p_2}\big)}{p_2, r_2}$. Also, we have $\Besov{0}{p_1, 1} \hookrightarrow L^{p_1} \hookrightarrow \Besov{0}{p_1, \infty}$.
\end{lem}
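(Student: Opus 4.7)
The plan is to deduce both embeddings from Bernstein's inequality on $\T^2$ applied to Littlewood--Paley blocks, in the same spirit as the heat-flow estimate \eqref{e:heat_flow3} was established in the excerpt.

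First I would prove the Bernstein-type inequality: for any $\lambda \geq 1$, any $1 \leq p_1 \leq p_2 \leq \infty$, and any $\varphi$ with $\supp \hat\varphi$ contained in the ball of radius $\lambda$, one has $\|\varphi\|_{L^{p_2}(\T^2)} \lesssim \lambda^{2(1/p_1 - 1/p_2)}\|\varphi\|_{L^{p_1}(\T^2)}$. The proof follows the periodisation strategy already used for \eqref{e:heat_flow3}: pick a Schwartz function $K$ on $\R^2$ whose Fourier transform is smooth, compactly supported and equal to one on the unit ball, then write $\varphi = K_\lambda * \varphi$ where $K_\lambda$ is the periodisation of $\lambda^2 K(\lambda\, \cdot)$. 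An application of Young's inequality on $\T^2$ using $\|K_\lambda\|_{L^r(\T^2)} \lesssim \lambda^{2(1 - 1/r)}$ with $r$ chosen so that $1 + 1/p_2 = 1/p_1 + 1/r$ finishes this step. Applying this bound to each block $\Delta_j f$ (whose Fourier support has diameter $\sim 2^j$) yields $\|\Delta_j f\|_{L^{p_2}} \lesssim 2^{2j(1/p_1 - 1/p_2)} \|\Delta_j f\|_{L^{p_1}}$; multiplying by $2^{j(s - 2(1/p_1 - 1/p_2))}$, taking $\ell^{r_2}$ in $j$, and using the elementary embedding $\ell^{r_1} \hookrightarrow \ell^{r_2}$ for $r_1 \leq r_2$ gives the first Besov embedding.

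For the endpoint chain $\Besov{0}{p_1, 1} \hookrightarrow L^{p_1} \hookrightarrow \Besov{0}{p_1, \infty}$, the right embedding follows from uniform $L^{p_1}$-boundedness of the Littlewood--Paley projectors (again via Young on $\T^2$ with the periodised kernel of $\rho_j$, whose $L^1(\T^2)$ norm is bounded uniformly in $j$), while the left one is the triangle inequality applied to the convergent decomposition $f = \sum_{j \geq -1} \Delta_j f$, giving $\|f\|_{L^{p_1}} \leq \sum_{j \geq -1} \|\Delta_j f\|_{L^{p_1}} = \|f\|_{\Besov{0}{p_1, 1}}$. The only step requiring genuine care is the transfer of Bernstein's inequality from $\R^2$ to $\T^2$: the standard continuous-setting proof uses explicit scaling of a Schwartz kernel, which here has to be replaced by a periodisation, and one has to check that the resulting $L^r(\T^2)$ norms of the periodised kernels scale with the same exponent in $\lambda$ as their $L^r(\R^2)$ counterparts. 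Once this routine but careful estimate is in place, the remainder of the argument is just bookkeeping with the definition \eqref{e:BesovNorm} of the Besov norms.
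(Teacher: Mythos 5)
Your proof is correct, and it follows the standard route that the paper itself does not spell out (it just cites \cite[Proposition~2.71]{BCD11}): prove a periodic Bernstein inequality $\|\Delta_j f\|_{L^{p_2}} \lesssim 2^{2j(1/p_1 - 1/p_2)}\|\Delta_j f\|_{L^{p_1}}$ by writing $\Delta_j f$ as a convolution with a rescaled, periodised kernel, then shift the weight $2^{js}$ accordingly and use $\ell^{r_1}\hookrightarrow\ell^{r_2}$; the two endpoint embeddings are the triangle inequality for $\sum_j\Delta_j f$ and the uniform $L^{p}$-boundedness of $\Delta_j$, respectively.

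Two small points worth making explicit when you write this up. First, the kernel bound $\|K_\lambda\|_{L^r(\T^2)}\lesssim\lambda^{2(1-1/r)}$ follows most cleanly from the two endpoints $\|K_\lambda\|_{L^1(\T^2)}\le\|g\|_{L^1(\R^2)}$ (Minkowski over the lattice sum) and $\|K_\lambda\|_{L^\infty(\T^2)}\lesssim\lambda^2$ (using the rapid decay of the Schwartz function $K$ together with $\lambda\ge 1$, so the off-lattice contributions are summable), combined via $\|K_\lambda\|_{L^r}^r\le\|K_\lambda\|_{L^\infty}^{r-1}\|K_\lambda\|_{L^1}$; this is exactly the periodisation issue you flagged. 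Second, since the Besov spaces here are defined on mean-free distributions, the block $\Delta_{-1}$ is supported in a ball rather than an annulus, and your formulation of Bernstein (support in a ball of radius $\lambda$, as opposed to an annulus) correctly covers that case, so no separate treatment is needed.
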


\subsection{It\^o's formula}
In this subsection, we revisit the ideas in \cite{HR24} quickly and then we discuss our strategy. There are mainly two inputs, a trick using It\^o's formula and the introduction of a second frequency scale that leads to a new ansatz. We will discuss the first trick in this subsection and the ansatz in the next subsection.

If the noise $\xi$ satisfies Assumption~\ref{as:noise} with $\limsup_{k\rightarrow \infty} |\phi_k| \leq \frac{1}{2} \alpha_0$\,, then $\xi$ can be decomposed into two independent noises $\xi = \xi_1 + \xi_2$\,, where $\xi_1$ satisfies Assumption~\ref{as:noise} with $\sup_{k\neq 0} |\phi_k| \leq \alpha_0$ and $\xi_2$ is a smooth (in space) noise with only finitely many non-zero Fourier modes. From now on we always assume that $\xi = \xi_1 + \xi_2$ with this property.

In general, \eqref{e:SNS} is a singular SPDE and we use the ``Da Prato--Debussche trick'' to define 
a suitable notion of solution. 
Let $X, v$ be solutions of
\begin{equs}
\label{e:linear_solution}
    \d_t X &= \Laplace X + \xi_1\;, \quad X[0]=0\;,\\
\label{e:v_equ}
    \d_t v &= \Laplace v - \P\div(v^{\otimes2} + 2 v \otimes_s X + \Wick{X^{\otimes2}}) + \xi_2\;,\quad v[0] = u[0]\;,
\end{equs}
where $\Wick{X^{\otimes2}}$ is defined by Wick renormalization. We then define solutions to \eqref{e:SNS} by $u := v+X$
and we would like to perform $L^2$ energy estimates on some remainder after subtracting a suitable irregular part. Note that the solution will immediately be distribution-valued, which essentially forces us to allow for initial conditions in $\cC^{-\kappa}$ for some small $\kappa > 0$ that do not have finite energy. We choose to decompose the initial data $u[0]$ into $\ur[0] + \us[0]$, where $\|\ur[0]\|_{\cC^{-\kappa}} \leq 2\alpha_0$ and $\|\us[0]\| \leq V_{2\alpha_0}(u)$ with $V_{2\alpha_0}$ defined in \eqref{e:Lyapunov_func}, and we incorporate the rough part $\ur$ into the 
solution to the linear equation. Precisely, we let $\Tilde{X}$ solve
\begin{equ}
\label{e:X_tilde_equ}
    (\d_t -\Laplace) \Tilde{X} = \xi_1\;, \quad\quad \Tilde{X}[0] = \ur[0]\;,
\end{equ}
and let $Y$ solve
\begin{equ}
\label{e:Y_equ}
    \d_t Y = \Laplace Y - \P \div\big(2Y\otimes_s \Tilde{X} + \Wick{\Tilde{X}^{\otimes 2}}\big)\;, \quad \quad Y[0] = 0\;.
\end{equ}
Note that $\Tilde{X} = X + e^{t\Delta}\ur[0]\,$, and the Wick renormalization is performed for 
$X^{\otimes 2}$, so distributing initial data in this way will not change the renormalization.
The remainder $w := u - \Tilde{X} - Y$ solves 
\begin{equ}
    \d_t w = \Laplace w - \P \div\big(w^{\otimes2}+ 2w\otimes_s(\Tilde{X}+Y) + Y^{\otimes2}\big) + \xi_2\;, \quad \quad w[0] = \us[0] \;.
\end{equ}

Even though $w$ is ``almost'' Lipschitz, the pairing $\langle w, \P\div(w\otimes_s \Tilde{X}) \rangle$ is still
ill-defined if we want to directly calculate $\frac{\md}{\md t}\|w\|^2$, since $w\in \cC^{1-\kappa}$ and $\Tilde{X} \in \cC^{-\kappa}$. Following the idea in \cite{HR24}, it is then natural to subtract from $w$ the most irregular part 
\begin{equ}
\label{e:phi_lambda}
    \phi_{\lambda} := -(\d_t-\Laplace)^{-1} \big(\P\div(2w\para \hH_{\lambda} \Tilde{X}\big)\;,
\end{equ}
where $\lambda$ is some parameter to be chosen later, depending on the size of the initial data. After subtraction, the equation for the remainder $\hat w := w - \phi_\lambda$ includes two terms affecting the choice of $\lambda$, namely
\begin{equ}
\label{e:main_terms}
	\P\div(\hat w \otimes_s \lL_{\lambda} X) \qquad \text{and} \qquad \P\div(\hat w \otimes_s \phi_{\lambda})\;.
\end{equ}
Trying to do a ``naïve'' energy estimate for the remainder $\hat w$, one gets something of the type
\begin{equ}
\label{e:heuristic_1}
	\d_t \|\hat w\|^2 \lesssim -\|\grad \hat w\|^2 + \|\grad \hat w\| \|\hat w\|_{L^p} \|\lL_{\lambda} X\|_{L^{\frac{2p}{p-2}}} + \|\grad \hat w\| \frac{\|\hat w \|_{L^4} \|w\|_{L^4} \|\Tilde{X}\|_{\cC^{-\kappa}}}{\lambda^{1-\kappa}}\;,
\end{equ}
where the choice of the constant $p \geq 2$ would be discussed later. In \eqref{e:heuristic_1}, the first term in the right-hand side comes from the dissipative effect of Laplacian and the other 
two terms come from the effects of the two terms in \eqref{e:main_terms} respectively. 
To make the last term small, one should choose $\lambda \geq \|w\|^{\frac{1}{1-\kappa}}$, but then the second term would be too large, no matter how we choose $p$. Actually, one expects that
\begin{align*}
	\|\lL_{\lambda} X\|_{L^p} \sim 
	\begin{cases}
		\sqrt{\log \lambda}\;, \quad & p < \infty\\ 
		\log \lambda\;, \quad & p = \infty\;,
	\end{cases}
\end{align*}
since the first case can be calculated explicitly if $p$ is an even integer and the second case is well-studied in the literature (see \cite{DRZ17} for example). In both cases, if $\lambda \geq \|w\|^{\frac{1}{1-\kappa}}$, because of the second term in \eqref{e:heuristic_1}, one 
obtains bounds of the type 
\begin{equ}
\label{e:double_exp_0}
	\d_t \|\hat w\|^2 \leq C \|\hat w\|^2 (\log \|\hat w\|)^{1+\eps}\;,
\end{equ}
which is still not good enough to get global well-posedness since $\eps > 0$. To handle this problem, the idea in \cite{HR24} is roughly as follows. First by the divergence free condition of $\hat w$ and $X$ one gets 
\begin{equ}
	\bracket{\hat w, \P \div(\hat w\otimes_s \lL_{\lambda} X)} = \bracket{\hat w, \grads \lL_{\lambda} X \cdot \hat w} \;.
\end{equ}
Exploiting the probabilistic structure of $X$, one can then show a bound of the type
\begin{equ}
	\bracket{\phi, \big(\Laplace + \grads \lL_{\lambda} X\big)\phi} \lesssim \|\phi\|^2 \log \lambda \;.
\end{equ}
Since we have $\|\hat w \|\sim \lambda$, in this way, one can push the $\eps$ in \eqref{e:double_exp_0} down to $0$ and obtain a double exponential bound 
for the solution. 

It seems hard to improve on this argument directly, so we choose to handle the term $\P\div(\hat w \otimes_s \lL_{\lambda} X)$ in another way. First note that it is not necessary to obtain a uniform in time bound for $w$, since it only serves as an intermediate step to give a good bound for the solution $u$ of \eqref{e:SNS}. Therefore, it may be possible to find some other remainder function for which energy estimates work better. 

For example, we can perform the following trick with It\^o's formula. Let $\Tilde{w} := w + \lL_{\lambda} \Tilde{X}$, then $\Tilde{w}$ satisfies
\begin{equ}
\label{e:w_tilde_equ}
    \d_t \Tilde{w} = \Laplace \Tilde{w} - \P \div \big(\Tilde{w}^{\otimes 2} + 2 w\otimes_s(\hH_{\lambda}\Tilde{X} + Y)- (\lL_{\lambda}\Tilde{X})^{\otimes 2} + Y^{\otimes 2}\big) + \lL_{\lambda}\xi_1 + \xi_2\;,
\end{equ}
with initial data $\Tilde{w}[0] = \lL_{\lambda} \ur[0] + \us[0]\,$. Still, the pairing $\langle{w, \P\div(w\otimes_s \hH_{\lambda} \Tilde{X})}\rangle$ is ill-defined in the energy estimate, so we consider the energy estimate for the smoother part $\Bar w = \Tilde{w} - \phi_{\lambda}$. We then 
find that, since we add the low frequency part $\lL_{\lambda} \Tilde X$ back to $w$, the worst term $\P\div(\hat w \otimes_s \lL_{\lambda} X)$ in previous discussion wouldn't appear in the equation for $\bar w$! It is transferred into another noise term $\lL_{\lambda} \xi_1$ instead, which affects the energy estimate through an Itô--Stratonovich correction term of order $\lambda^2$. This eventually yields a bound of the type
\begin{equ}
\label{e:heuristic_2}
	\d_t \|\bar w\|^2 \lesssim -\|\grad \bar w\|^2 + \alpha_0^2\lambda^2 + \|\grad \bar w\|\frac{\|\bar w\|_{L^4} \|w\|_{L^4} \|\Tilde{X}\|_{\cC^{-\kappa}}}{\lambda^{1-\kappa}}\;.
\end{equ}
If we compare this with \eqref{e:heuristic_1}, one can find that the second term in \eqref{e:heuristic_1} is replaced by the term $\alpha_0^2 \lambda^2$ now, which is some improvement if $\lambda \leq \|w\|$, since in this case it can be absorbed by the dissipation term provided that $\alpha_0$ is also small enough. (This is the main reason that the smallness of $\alpha_0$ is needed in the main theorem.)

However, as mentioned previously, to make the last term small enough one has to choose $\lambda \geq \|w\|^{\frac{1}{1-\kappa}}$, so these two terms again conflict with each other, and we want to somehow push the parameter $\kappa$ to zero to make these two thresholds match. 

\subsection{Ansatz}
To achieve this goal, we introduce another frequency scale $K$. From now on we fix $\lambda = \|w[0]\|$. Let $\lambda_+ := \lambda \vee 1$ and 
\begin{equ}[e:defK]
	K := \lambda_+ \vee \frac{\|w\|_{L^{12}}^{100}}{\lambda_+^{99}}\;.
\end{equ}
Then, instead of subtracting $\phi_{\lambda}$ as the roughest part, we subtract 
\begin{equ}
\label{e:equ_wh}
    \wh := - 2(\d_t-\Laplace)^{-1}\big(\P\div(
    w \para \hH_{K} \Tilde{X})\big)\;.
\end{equ}
Then the remainder $\wl := \Tilde{w} - \wh$ solves the equation
\minilab{e:equ_wl}
\begin{equs}
    \d_t \wl &= \Laplace \wl - \P\div((\wl)^{\otimes2}) + \lL_{\lambda}\xi_1 + \xi_2
    \label{e:equ_wl_NS}\\
    &\quad - 2\P\div(\wl \otimes_s\wh)- (\d_t-\Laplace)\wh - 2\P\div(w\para \hH_{\lambda} \Tilde{X})\label{e:equ_wl_big}\\
    &\quad \underbrace{{}- \P\div\big((\wh)^{\otimes2} + 2w \rpareq \hH_{\lambda} \Tilde{X} + 2w\otimes_s Y + Y^{\otimes2} - (\lL_{\lambda} \Tilde{X})^{\otimes2}\big)}\;.\qquad\label{e:equ_wl_small} \\
    &\qquad \qquad \qquad \qquad \qquad \qquad \; =: \rR
\end{equs}
Here the first line can be seen as a stochastic Navier--Stokes equation with smooth noise, the second line is the main diverging part in the equation, and the third line is a small remainder, henceforth denoted by $\rR$. 
Plugging the definition of $\wh$ from \eqref{e:equ_wh} into \eqref{e:equ_wl_big} we obtain 
\begin{equ}
    \eqref{e:equ_wl_big} = - 2\P\div\bigl(\wl\otimes_s\wh\bigr) - 2 \P \div \bigl(w \para \pP_{\lambda, K} \Tilde{X}\bigr)\;. 
\end{equ}

\begin{rmk}
We summarise our decomposition of the solution $u\in \cC^{-\kappa}$ as follows. First, $u = \Tilde{X} + Y + w$, where $\Tilde{X}$ is the linear solution \eqref{e:X_tilde_equ} with a $\cC^{-\kappa}$ rough initial data $\ur$, $Y$ solves the linear inhomogeneous equation \eqref{e:Y_equ}, and the remainder $w$ is expected to be in $\cC^{1-\kappa}$. We further decompose $w$ as $w = -\lL_{\lambda}\Tilde{X} + \wh + \wl$, where $\wh$ is given by \eqref{e:equ_wh} and $\wl$ satisfies equation \eqref{e:equ_wl}. More importantly, $\wl$ is now expected to be in $H^1$ so we can try to use energy estimates for $\wl$.
The two main features of the new ansatz proposed in this article are as follows.
\begin{enumerate}
    \item The term $- \lL_{\lambda} \Tilde{X}$ in the decomposition of $w$, which gives rise to the term $\lL_{\lambda} \xi_1$ in \eqref{e:equ_wl_NS}. The irregularity of $\xi$ then appears as the Itô--Stratonovich correction term in the It\^o formula.
    \item The introduction of a new time dependent frequency scale $K$. This suggests that the $\wh$ defined in the current way represents the high frequency part of $w$ better than the one defined in \cite{HR24}. 
    Note that we have to choose $\lambda$ in a time-independent way, or we have to also include terms 
    involving its time derivative in the equation for the remainder. The second frequency scale $K$, however, can be time dependent. 
\end{enumerate}
\end{rmk}

Now we use Itô's formula to get
\minilab{e:energy_estimate}
\begin{equs}
    \frac{\md}{\md t}\Big(\frac{1}{2}\|\wl\|^2\Big)
    &= -\|\grad \wl\|^2 + \bracket{\wl, \circ\, (\lL_{\lambda} \xi_1 + \xi_2)} + \bracket{\wl, \rR}\label{e:energy_estimate_perturbation}\\
     &+  \bracket{\grad\wl,  2 w \para \pP_{\lambda, K} \Tilde{X}} + \bracket{\grad \wl, 2\wl\otimes_s \wh} \;. \label{e:energy_estimate_main}
\end{equs}
Here $\bracket{\wl, \circ\, (\lL_{\lambda} \xi_1 + \xi_2)}$ denotes Stratonovich integration. Since we have taken $K$ to be much larger than $\lambda$, $\wh$ is smaller so that the second term in \eqref{e:energy_estimate_main} is small enough, but the price is that we get an extra term $\bracket{\grad\wl,  2 w \para \pP_{\lambda, K} \Tilde{X}}$. For this term, one can check that $\|\pP_{\lambda, K} \Tilde X\|_{L^4} \sim \sqrt{\log \frac{K}{\lambda}}$, so that we roughly get
\begin{equ}
	|\bracket{\grad\wl,  2 w \para \pP_{\lambda, K} \Tilde{X}}| \lesssim \alpha_0 \|\grad \wl\| \|w\|_{L^4} \sqrt{\log \frac{\|w\|_{L^{12}}}{\|w\|} }\leq \delta \|\grad \wl\|^2 + C\alpha_0 \|w\|^2\;.
\end{equ}
Therefore, if $\alpha_0$ is small enough, these terms can be absorbed by the dissipation term and we get a uniform in time bound.

In practice, we would like to work before a certain stopping time for convenience. We then use the strong Markov property to 
iterate our bound and eventually get the sought after long time control. We define the stopping time
\begin{equ}
    \label{e:stopping_time}
    T := \inf\Big\{t\geq0: \|w[t]\| > 2 V_{\alpha_0}\big(u[0]\big) \vee 2 \text{ or } \|X\|_{\cC^{-\kappa}} > \alpha_0 \text{ or } \|Y\|_{L^{\infty}} > \alpha_0^2 \Big\}\;.
\end{equ}
Before time $T$, we then have 
\begin{equ}
	\|w\| \leq 2\lambda_+ + 2\alpha_0 \leq 4 \lambda_+ \;, \quad  \|X\|_{\cC^{-\kappa}} \leq \alpha_0 \;, \quad \|Y\|_{L^{\infty}} \leq \alpha_0^2 \;,\quad   \|\tilde X\|_{\cC^{-\kappa}} \leq c_0\alpha_0\;.
\end{equ}
Here for $\|w\|$ we use the fact that $\alpha_0 \leq 1$, and for $\|\Tilde X\|_{\cC^{-\kappa}}$, we use continuity of heat semigroup in $\cC^{-\kappa}$ and the fact that $\|\ur[0]\|_{\cC^{-\kappa}} \leq 2\alpha_0$. $c_0$ is some universal constant related to the heat semigroup. Our main bound is the following. 
\begin{prop}
\label{pr:main_bound}
	For any initial data $u[0] = \ur[0] + \us[0]$ with $\|\ur[0]\|_{\cC^{-\kappa}} \leq 2\alpha_0$ and $\|\us[0]\| = \lambda$\,, define $\wl$ and the stopping time $T$ as above. Then there exists some constant $C$ such that for any $0 < a, \alpha_0 < 1$, we have 
	\begin{equ}
	\label{e:main_bound}
		\EE \Big( e^{\frac{T\wedge a}{2}}\|\wl[T\wedge a]\|^2 - \|\wl[0]\|^2\Big) \leq C \alpha_0 \cdot a^{1-6\kappa} \lambda^2 + C a\;.
	\end{equ}
\end{prop}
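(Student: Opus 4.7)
My plan is to apply Itô's formula to $e^{t/2}\|\wl[t]\|^2$, stop at $T \wedge a$, and take expectation. Using \eqref{e:energy_estimate_perturbation}--\eqref{e:energy_estimate_main} together with the identity $\bracket{\wl,\circ\, d\xi} = \bracket{\wl, d\xi} + \tfrac12 \text{tr}(\ldots)\,dt$, this gives
\begin{equs}
\EE\bigl(e^{(T\wedge a)/2}\|\wl[T\wedge a]\|^2 - \|\wl[0]\|^2\bigr) &= \EE \int_0^{T\wedge a} e^{s/2}\Bigl(\tfrac12\|\wl\|^2 - 2\|\grad\wl\|^2 + Q(s) + 2\bracket{\wl,\rR}\\
&\quad + 4\bracket{\grad\wl, w\para\pP_{\lambda,K}\tilde X} + 4\bracket{\grad\wl, \wl\otimes_s \wh}\Bigr)\md s\;,
\end{equs}
where $Q(s)$ is the Itô correction from the noise $\lL_\lambda\xi_1 + \xi_2$. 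Since only finitely many modes of $\xi_2$ are present and the high frequencies of $\xi_1$ are cut off, $Q(s) \lesssim \sum_{|k|\le \lambda}|\phi_k|^2 + C \lesssim \alpha_0^2\lambda^2 + 1$, contributing at most $C(\alpha_0\lambda^2 + 1)a$ after integration. The martingale part vanishes in expectation once one verifies square-integrability on the stopped interval, which follows from the stopping-time bounds. As $\wl$ is divergence-free and mean-free, Poincaré gives $\tfrac12\|\wl\|^2 \le \|\grad\wl\|^2$, absorbing the $e^{t/2}$-generated mass term into the dissipation.

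The remaining three integrands must each be controlled by a small multiple of $\|\grad\wl\|^2$ plus an acceptable error. For $\bracket{\grad\wl, \wl\otimes_s\wh}$, I would combine the Schauder bound \eqref{e:Schauder1} applied to \eqref{e:equ_wh} with the paraproduct estimate \eqref{e:para1}: this yields $\|\wh\|_{\cC^{1+\kappa}} \lesssim K^{-\kappa}\|w\|_{L^\infty_{[0,T]}}\|\tilde X\|_{\cC^{-\kappa}}$, so that the 2D Ladyzhenskaya inequality $\|\wl\|_{L^4}^2 \lesssim \|\wl\|\|\grad\wl\|$ and Young give $|\bracket{\grad\wl, \wl\otimes_s\wh}| \le \delta\|\grad\wl\|^2 + C\|\wh\|_{L^4}^4\|\wl\|^2$, which is small thanks to $K \ge \lambda_+$. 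The small remainder $\rR$ in \eqref{e:equ_wl_small} is a sum of bilinear expressions in $w$, $\wh$, $Y$, $\hH_\lambda\tilde X$, $\lL_\lambda\tilde X$; each factor is pointwise bounded prior to $T$ by the stopping criteria \eqref{e:stopping_time}, and a Cauchy--Schwarz pairing with $\wl$ produces at worst $\delta\|\grad\wl\|^2 + C(\alpha_0^2\lambda_+^2 + 1)$, contributing within the target bound.

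The main obstacle is the marginal term $\bracket{\grad\wl, w\para\pP_{\lambda,K}\tilde X}$, for which the specific choice \eqref{e:defK} of $K$ is crucial. The paraproduct bound \eqref{e:para1} and a stochastic $L^p$ estimate of the type $\|\pP_{\lambda,K}\tilde X\|_{L^4} \lesssim \alpha_0\sqrt{\log(K/\lambda_+)}$ (established exactly as for frequency-truncated log-correlated Gaussian fields) give
\begin{equ}
|\bracket{\grad\wl, w\para\pP_{\lambda,K}\tilde X}| \lesssim \alpha_0\|\grad\wl\|\,\|w\|_{L^4}\sqrt{\log(K/\lambda_+)}\;.
\end{equ}
Since $K/\lambda_+ \le (\|w\|_{L^{12}}/\lambda_+)^{100}$, the logarithm is at most $100\log^+(\|w\|_{L^{12}}/\lambda_+) \lesssim_\kappa (\|w\|_{L^{12}}/\lambda_+)^{\kappa}$, and interpolation $\|w\|_{L^{12}} \lesssim \|w\|^{\theta}\|w\|_{H^1}^{1-\theta}$ together with $\|w\| \lesssim \lambda_+$ (valid before $T$) lets one distribute the $\|w\|_{L^{12}}$ factor between $\|\grad\wl\|$ and the polynomial prefactor. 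Applying Young's inequality then produces $\delta\|\grad\wl\|^2 + C\alpha_0\lambda^2$ (with an admissible power loss), which is the source of the factor $\alpha_0$ in the main bound. Finally, the $a^{1-6\kappa}$ rather than $a^1$ in the conclusion comes from the short-time singularity $\|\tilde X[s]\|_{\cC^{-\kappa+\eps}} \lesssim \alpha_0 s^{-\eps/2}$ induced by the rough initial data $\ur[0]$, whose small-power time integral over $[0,a]$ yields exactly such a power of $a$.
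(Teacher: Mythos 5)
Your overall skeleton matches the paper's: apply It\^o to $e^{t/2}\|\wl\|^2$, stop at $T\wedge a$, take expectations so the martingale vanishes, bound the Stratonovich correction by $\alpha_0^2\lambda^2+C$, absorb the mass term $\tfrac12\|\wl\|^2$ into the dissipation via Poincar\'e, and control the three remaining integrands by $\delta\|\grad\wl\|^2$ plus acceptable errors. However, several of your concrete estimates are incorrect or under-justified. First, your Schauder claim $\|\wh\|_{\cC^{1+\kappa}}\lesssim K^{-\kappa}\|w\|_{L^\infty}\|\tilde X\|_{\cC^{-\kappa}}$ does not hold: $w\para\hH_K\tilde X\in\cC^{-\kappa}$ at best, so $\wh$ lives at regularity $1-\kappa$ not $1+\kappa$, and the stopping time only controls $\|w\|_{L^2}$, not $\|w\|_{L^\infty}$. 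The actual proof for $\bracket{\grad\wl,\wl\otimes_s\wh}$ must exploit the precise scaling $K\ge\|w\|_{L^{12}}^{100}/\lambda_+^{99}$ (not merely $K\ge\lambda_+$) through a chain of Besov/Schauder/paraproduct estimates in $L^3$ and $L^{12}$, combined with the moment bound from Corollary~\ref{cor:lambda_norm} after taking expectation; ``small thanks to $K\ge\lambda_+$'' is not where the smallness actually comes from.

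Second, your treatment of the marginal term has two real gaps. The pathwise estimate $\|\pP_{\lambda,K}\tilde X\|_{L^4}\lesssim\alpha_0\sqrt{\log(K/\lambda_+)}$ is not available under the stopping time \eqref{e:stopping_time}, which only constrains $\|X\|_{\cC^{-\kappa}}$; the paper deliberately avoids any such pathwise claim because $K$ is random and time-dependent, and instead produces a deterministic pathwise bound of the form \eqref{e:middle_freq1} in terms of $\|\pP_{\lambda,K}\tilde X\|_{\Besov{-\kappa}{4,\infty}}$ whose expectation is controlled by Corollary~\ref{cor:lambda_norm}. And the interpolation $\|w\|_{L^{12}}\lesssim\|w\|^\theta\|w\|_{H^1}^{1-\theta}$ is illegitimate because $w$ is not in $H^1$ (only $\wl$ is); one must first decompose $w=\wl+\wh-\lL_\lambda\tilde X$ and estimate each piece separately before interpolating on $\wl$. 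These are not cosmetic: without the decomposition the $\|\grad\wl\|$ that should appear from interpolation never emerges, and without working in expectation the stochastic factor $\sqrt{\log}$ is unjustified. Your identification of $\rR$ as benign, and of the $a^{1-6\kappa}$ as coming from the short-time singularity of $e^{t\Delta}\ur[0]$, are both correct.
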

A lower bound for $T$ independent of the size of $\lambda$ then follows which, when combined with 
the strong Markov property, yields global well-posedness. Moreover, with the lower bound for $T$, previous decompositions for $u$ and the following Cauchy--Schwarz inequality
\begin{equ}
	\left(\EE \|\wl[T\wedge a]\| \right)^2 \leq \EE \left( e^{\frac{T\wedge a}{2}}\|\wl[T\wedge a]\|^2 \right)\EE e^{-\frac{T\wedge a}{2}}\;,
\end{equ}
we obtain a bound similar to \eqref{e:main_result1}, but with the time $t$ replaced by a stopping time. 
A stopping time argument is then proved in Proposition~\ref{pr:exponential_decay_general} to upgrade this to a fixed deterministic time. We write the argument in a general way to make it possible to apply the idea to other equations. Finally we sharpen our method to obtain a uniform in time bound for $\EE V_{2\alpha_0}(u[t])^{2N}$ for arbitrarily large $N$. Keeping track of the dependence on $N$ gives us a stretched exponential tail bound for the invariant measure.

\section{Uniform in time bound}
\label{sec:bound}
\subsection{Stochastic objects}
We begin with some basic facts on the stochastic objects.
Recall the definition of the linearised solution $X$ in \eqref{e:linear_solution}.

\begin{lem}
\label{le:stochastic_object}
    There exists some constant $C$ such that for any $j, p \in \N^*$ and $t \geq 0$\,, we have  
    \begin{equ} 
    \label{e:stochastic_object}
        \Big(\EE \|\Delta_{j}X[t]\|_{L^{2p}}^{2p} \Big)^{\frac{1}{2p}}\leq C\sqrt{p \,(1 \wedge 2^{2j}t)}\cdot \alpha_0\;.
    \end{equ}
\end{lem}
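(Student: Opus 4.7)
The plan is to exploit the fact that $X[t]$ is a Gaussian field, so the bound reduces via Nelson's hypercontractivity to an $L^2$ calculation, which in turn is an explicit computation on the covariance of the Ornstein--Uhlenbeck-type process $X$ at a single Fourier mode.

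First I would write the Duhamel representation in Fourier space. Using \eqref{e:linear_solution} and the form of the noise from Assumption~\ref{as:noise}, one has pointwise
\begin{equ}
	\Delta_j X[t](x) = \sum_{k \in \Z^2\setminus\{0\}} \rho_j(k) e^{2\pi i k\cdot x} \frac{k^\perp}{|k^\perp|} \phi_k \int_0^t e^{-(t-s)(2\pi)^2|k|^2}\md B_k(s)\;,
\end{equ}
so that, for each fixed $x \in \T^2$, the quantity $\Delta_j X[t](x)$ lies in the first (inhomogeneous complex) Wiener chaos of $\{B_k\}$. Using Itô's isometry together with the independence conditions on the $B_k$ (and the constraint $B_k = \bar B_{-k}$), a direct computation yields
\begin{equ}
	\EE |\Delta_j X[t](x)|^2 \lesssim \sum_{|k|\sim 2^j} |\phi_k|^2 \cdot \frac{1 - e^{-2t (2\pi)^2|k|^2}}{(2\pi)^2|k|^2}\;.
\end{equ}
Bounding each term by $\min(t, C|k|^{-2})$, using $|\phi_k| \le \alpha_0$, and observing that $\#\{k : |k| \sim 2^j\} \lesssim 2^{2j}$, this sum is at most $C\alpha_0^2 (1 \wedge 2^{2j} t)$, uniformly in $x$.

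Next I would invoke Nelson's estimate for variables in the first Wiener chaos, giving
\begin{equ}
	\bigl(\EE |\Delta_j X[t](x)|^{2p}\bigr)^{1/(2p)} \leq \sqrt{2p-1}\cdot \bigl(\EE |\Delta_j X[t](x)|^2\bigr)^{1/2} \leq C\sqrt{p(1 \wedge 2^{2j} t)}\cdot \alpha_0\;.
\end{equ}
Since the distribution of $\Delta_j X[t](x)$ does not depend on $x$ by the translation invariance of the noise (the factor $\frac{k^\perp}{|k^\perp|}$ commutes with translations), applying Fubini yields
\begin{equ}
	\EE \|\Delta_j X[t]\|_{L^{2p}}^{2p} = \int_{\T^2} \EE |\Delta_j X[t](x)|^{2p}\md x \leq \bigl(C\sqrt{p(1 \wedge 2^{2j} t)}\cdot \alpha_0\bigr)^{2p}\;,
\end{equ}
and the claim follows upon taking the $(2p)$-th root.

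The argument is essentially routine; the only mild subtleties are the bookkeeping in the covariance calculation due to the complex-valued structure of $\{B_k\}$ and the constraint $\phi_k = \bar\phi_{-k}$, and the careful use of $|\phi_k|\le \alpha_0$ (rather than $\phi_k = 1$) which is what makes the bound proportional to $\alpha_0$ rather than to the white-noise constant.
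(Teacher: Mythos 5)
Your proof takes essentially the same route as the paper's: a Fourier--Duhamel representation of $\Delta_j X[t](x)$, Itô isometry to bound its variance by $C\alpha_0^2(1\wedge 2^{2j}t)$, Nelson's hypercontractive estimate for the first Wiener chaos, and Fubini together with spatial stationarity to pass from a pointwise to an $L^{2p}$ bound. The only difference is presentational, you make the mode-counting step and the complex-Brownian bookkeeping explicit where the paper states the variance bound in one line.
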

\begin{proof}
	Note that $\Delta_{j}X$ can be represented by $\Delta_{j} X(x,t) =  \sum_{k} \rho_j(k) \Hat{X}(k,t) e_k(x)$, where 
    \begin{equ}
    	\Hat{X}(k,t) =  \int_0^t e^{-|k|^2(t-s)}\phi_k \md B_k(s)\;.
    \end{equ}
    This means that $\Delta_{j}X(x, t)$ is a Gaussian random variable with variance at most
    \begin{equ}
        \alpha_0^2 \sum_{k} \rho_j^2(k) \int_{0}^{t}e^{-2|k|^2(t-s)}\md s \leq 100 \alpha_0^2(1 \wedge 2^{2j} t) \;.
    \end{equ}
    Then by Fubini and Nelson's estimate we have
    \begin{equ}
        \EE \|\Delta_{j}X[t]\|_{L^{2p}}^{2p}  = \int_{\T^2} \EE |\Delta_{j}X(x,t)|^{2p} \md x \leq C^p p^p \alpha_0^{2p}(1 \wedge 2^{2j} t)^{p}\;,
    \end{equ}
    which concludes the proof of \eqref{e:stochastic_object}.
\end{proof}

\begin{lem}
\label{le:Tilde_X_bound}
	For any $2 \leq p \leq \frac{1}{\kappa}$\,, there exists some $C(p)$ such that for any $j \geq 0$ and $0 \leq a \leq 1$, we have 
	\begin{equ}
		\EE\int_0^a \|\Delta_j \Tilde{X}[t]\|_{L^p}^p \md t \leq C(p) \alpha_0^p\cdot a^{1-\kappa p}\;.
	\end{equ}
\end{lem}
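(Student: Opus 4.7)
The plan is to decompose $\tilde X = X + e^{t\Delta}\ur[0]$ according to \eqref{e:X_tilde_equ}, where $X$ is the linear solution from \eqref{e:linear_solution} and the second term carries the rough initial data. Then I would bound the two contributions to $\|\Delta_j \tilde X[t]\|_{L^p}^p$ separately, each by $C(p)\alpha_0^p a^{1-\kappa p}$, and conclude by summing.

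For the stochastic contribution, I would reuse the Gaussian structure exploited in Lemma~\ref{le:stochastic_object}. The point is that $\Delta_j X(x,t)$ is Gaussian, mean zero, with variance at most $100\,\alpha_0^2(1\wedge 2^{2j}t)$, uniformly in $x$. Hence by Nelson's estimate
\begin{equ}
\EE |\Delta_j X(x,t)|^p \leq C^p p^{p/2} \alpha_0^p (1\wedge 2^{2j}t)^{p/2}\;.
\end{equ}
Integrating in $x$ by Fubini and in $t$, using the trivial bound $(1\wedge 2^{2j}t)^{p/2}\leq 1$ and $a\leq 1$,
\begin{equ}
\EE \int_0^a \|\Delta_j X[t]\|_{L^p}^p \md t \leq C(p)\alpha_0^p \cdot a \leq C(p)\alpha_0^p \cdot a^{1-\kappa p}\;.
\end{equ}

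For the deterministic contribution coming from the initial data, I would use the Littlewood--Paley characterisation of $\cC^{-\kappa}$ to obtain $\|\Delta_j \ur[0]\|_{L^\infty} \leq 2\alpha_0 \cdot 2^{\kappa j}$, hence also in $L^p$ on $\T^2$. The heat-flow estimate \eqref{e:heat_flow1} then gives
\begin{equ}
\|\Delta_j e^{t\Delta}\ur[0]\|_{L^p} \leq C \alpha_0\, 2^{\kappa j} e^{-c t 2^{2j}}\;,
\end{equ}
so that
\begin{equ}
\int_0^a \|\Delta_j e^{t\Delta}\ur[0]\|_{L^p}^p \md t \leq C^p \alpha_0^p\, 2^{\kappa p j} \int_0^a e^{-cpt 2^{2j}}\md t \leq C^p \alpha_0^p\, 2^{\kappa p j} \bigl(a \wedge 2^{-2j}\bigr)\;.
\end{equ}
Splitting on $2^{2j}\leq a^{-1}$ versus $2^{2j}>a^{-1}$ and using $\kappa p \leq 1 < 2$, a short optimisation shows both regimes are bounded by $C(p)\alpha_0^p a^{1-\kappa p/2}$, which is in turn $\leq C(p)\alpha_0^p a^{1-\kappa p}$ for $a\leq 1$. (The case $a>1$ is not needed, but one can always enlarge the constant.)

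Adding the two estimates and absorbing constants yields the claim. There is no serious obstacle here; the only minor care needed is checking that the $j$-dependence in the deterministic piece, which grows like $2^{\kappa p j}$, is correctly balanced against the heat decay $e^{-ct2^{2j}}$ so as to give a bound uniform in $j$ with the right $a$-exponent. The constraint $p\leq 1/\kappa$ from the statement is precisely what makes $\kappa p\leq 1<2$, ensuring that this balance produces a positive power of $a$.
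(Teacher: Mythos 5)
Your proposal is correct and takes essentially the same route as the paper: decompose $\tilde X = X + e^{t\Delta}\ur[0]$, handle the Gaussian part via variance bounds and hypercontractivity (equivalently, Lemma~3.1), and handle the deterministic part via heat-flow smoothing of the $\cC^{-\kappa}$ initial data, then integrate in time.

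The only real difference is in the deterministic piece: the paper passes to a $j$-uniform bound $\|e^{s\Delta}\ur\|_{L^\infty}\lesssim \alpha_0 s^{-\kappa}$ via the embedding $\cC^\kappa\hookrightarrow L^\infty$ and \eqref{e:heat_flow2}, and integrates $s^{-\kappa p}$ over $[0,a]$, while you keep the frequency localisation and use the block-wise decay $2^{\kappa j}e^{-cts^{2j}}$ from \eqref{e:heat_flow1}. The two routes produce slightly different intermediate exponents — yours gives $a^{1-\kappa p/2}$, which is a bit stronger, whereas the paper's gives $a^{1-\kappa p}$ directly but with a constant that blows up as $\kappa p\to 1$ — yet both are dominated by the claimed $C(p)\alpha_0^p a^{1-\kappa p}$ for $a\le1$ and $p<1/\kappa$. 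Your argument is slightly more robust at the endpoint $p=1/\kappa$, where the paper's time integral $\int_0^a s^{-\kappa p}\,\md s$ is borderline. In short, this is the paper's proof with a more careful bookkeeping of the frequency dependence; no gaps.
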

\begin{proof}
	Note that $\Tilde{X}[t] = e^{t\Delta}(\hH_{\lambda} \ur) + X[t]$. For the first term, by Besov embedding and heat flow estimate \eqref{e:heat_flow2} we have 
\begin{equ}
	\|e^{s \Delta}(\hH_{\lambda} \ur)\|_{L^{\infty}} \lesssim \|e^{s \Delta}(\hH_{\lambda} \ur)\|_{\cC^{\kappa}} \lesssim \alpha_0 s^{-\kappa}\;.
\end{equ}
The lemma follows by combining this with Lemma~\ref{le:stochastic_object}.
\end{proof}
\begin{cor}
\label{cor:lambda_norm}
	For any $\kappa > 0$, $p\in [1, \infty)$ and $0 < a \leq 1$ we have 
	\begin{equ}
		\sup_{\lambda \geq 1} \lambda^{p \kappa} \EE \int_0^a \| \hH_{\lambda}\Tilde X\|_{\bB^{-\kappa}_{p, p}}^p \md t \leq C(p)\alpha_0^p a^{1 - \kappa p} \;.
	\end{equ}
\end{cor}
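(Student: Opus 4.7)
The plan is to expand $\|\hH_\lambda \Tilde X\|_{\Besov{-\kappa}{p,p}}^p$ via the defining series \eqref{e:BesovNorm}, to exploit the frequency support of $\hH_\lambda$ in order to restrict the summation to blocks at frequency $\gtrsim \lambda$, and then to apply the pointwise-in-$j$ bound of Lemma~\ref{le:Tilde_X_bound} together with a geometric series in $j$ to extract the decaying factor $\lambda^{-\kappa p}$.

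More concretely, I would first write
\begin{equ}
	\EE \int_0^a \| \hH_{\lambda}\Tilde X[t]\|_{\Besov{-\kappa}{p,p}}^p \md t = \sum_{j \geq -1} 2^{-\kappa j p} \EE \int_0^a \|\Delta_j \hH_{\lambda}\Tilde X[t]\|_{L^p}^p \md t\;.
\end{equ}
Since $\hH_\lambda = \sum_{i \geq 0} \Delta_{i+\log_2^+\lambda}$ by Definition~\ref{def:Hlambda}, quasi-orthogonality of the Littlewood--Paley decomposition forces $\Delta_j \hH_\lambda \Tilde X = 0$ unless $j \geq \log_2^+\lambda - 1$; for all other $j$, the block $\Delta_j \hH_\lambda \Tilde X$ is a sum of at most three Littlewood--Paley blocks of $\Tilde X$, so $\|\Delta_j \hH_\lambda \Tilde X\|_{L^p} \lesssim \|\Delta_j \Tilde X\|_{L^p}$. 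The sum thus collapses to $j \geq \log_2^+\lambda - 1$.

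For each such $j$, Lemma~\ref{le:Tilde_X_bound} yields the uniform-in-$j$ bound $\EE \int_0^a \|\Delta_j \Tilde X[t]\|_{L^p}^p \md t \leq C(p) \alpha_0^p \, a^{1-\kappa p}$ (in the range $2 \leq p \leq 1/\kappa$; for $p < 2$ this reduces to the $p=2$ case by H\"older on $\T^2$, while for $p > 1/\kappa$ one can either replace $\kappa$ with a smaller $\kappa'$ and pay the trivial factor $a^{1-\kappa p} \geq 1$, or re-run the proof of Lemma~\ref{le:Tilde_X_bound} with the relevant exponent). Inserting this gives
\begin{equ}
	\EE \int_0^a \| \hH_{\lambda}\Tilde X\|_{\Besov{-\kappa}{p,p}}^p \md t \leq C(p)\alpha_0^p a^{1-\kappa p} \sum_{j \geq \log_2^+\lambda - 1} 2^{-\kappa j p} \leq C'(p) \lambda^{-\kappa p} \alpha_0^p a^{1-\kappa p}\;,
\end{equ}
using the geometric series identity $\sum_{j \geq J} 2^{-\kappa j p} = 2^{-\kappa J p}/(1-2^{-\kappa p})$. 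Multiplying by $\lambda^{\kappa p}$ and taking the supremum over $\lambda \geq 1$ yields the claim.

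No step looks like a serious obstacle: the only point requiring care is ensuring the $j$-uniform bound from Lemma~\ref{le:Tilde_X_bound} really applies on the full range of $p$ in the corollary, which is handled by the slight range adjustments mentioned above. The essential mechanism is simply that the rough-Sobolev norm $\Besov{-\kappa}{p,p}$ buys the factor $\lambda^{-\kappa p}$ automatically out of any object supported on frequencies $\gtrsim \lambda$, with Lemma~\ref{le:Tilde_X_bound} providing the frequency-uniform $L^p$-in-space, $L^p$-in-time bound needed to close the summation.
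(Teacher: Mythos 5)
Your proof is correct and is essentially the (implicit) argument the paper has in mind: expand $\|\hH_\lambda\Tilde X\|_{\Besov{-\kappa}{p,p}}^p$ via \eqref{e:BesovNorm}, use the frequency support of $\hH_\lambda$ to restrict the sum to $j \gtrsim \log_2\lambda$, apply Lemma~\ref{le:Tilde_X_bound} uniformly in $j$, and sum the geometric series in $2^{-\kappa j p}$ to harvest $\lambda^{-\kappa p}$. One small caveat on your side-remark about the range of $p$: your first suggestion for $p > 1/\kappa$ (replace $\kappa$ by a smaller $\kappa'$) does not actually close the argument, because the geometric series then only yields $\lambda^{-\kappa' p}$, which is weaker than the stated $\lambda^{-\kappa p}$; you would have to instead re-derive a $j$-uniform bound directly exploiting the extra decay of $\Delta_j e^{t\Delta}\ur$ from \eqref{e:heat_flow1}. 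In practice this is moot since $\kappa \le 10^{-4}$ and the corollary is only invoked with $p\in\{2,\dots,6\}$, but it is worth being aware that the stated range $p\in[1,\infty)$ is a bit generous relative to what Lemma~\ref{le:Tilde_X_bound} directly supplies.
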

\begin{rmk}
	This means that after taking expectation, $\Tilde X$ behaves like a function with regularity $0$ instead of $-\kappa$. 
\end{rmk}

Although a noise $\xi$ satisfying Assumption~\ref{as:noise} is not space-time white noise, the property of renormalisation is the same as the case of space-time white noise. We omit the proof of the following
and refer the reader to \cite[Appendix]{DPD02}.
\begin{prop}
\label{pr:wick_renormalization}
    Let $X_{N} := \lL_{N} X$. Then there exist a stochastic process $\Wick{X^{\otimes 2}}$ such that for any $T>0, \kappa>0$,
    \begin{equ}
    \label{e:wick_product}
        \big(\Wick{X_N^{\otimes 2}}\big) := X_N^{\otimes 2} - \EE X_N^{\otimes 2} \rightarrow \Wick{X^{\otimes 2}} \quad\text{in}\quad C([0,T]; \cC^{-\kappa})\;.
    \end{equ}
    Since $X_N$ is stationary in space, we have in particular 
        $\div(X_N^{\otimes 2}) \rightarrow \div(\Wick{X^{\otimes 2}})$ in $C([0,T]; \cC^{-1-\kappa})$.
    Moreover, for any $0<\eta<\frac{1}{2}$, $p\geq 1$ and small $\kappa'>0$, we have 
    \begin{equs}
    \label{e:linear_solution_short_time}
        \EE \sup_{s \leq t}\|X[s]\|_{\cC^{-2\eta}}^p &\leq C(p,\eta, \kappa') \alpha_0^p t^{p(\eta-\kappa')}\;, \\
    \label{e:wick_product_short_time}
        \EE \sup_{s \leq t}\|\div(\Wick{X^{\otimes 2}[s]})\|_{\cC^{-1-2\eta}}^p &\leq C(p,\eta,\kappa')\alpha_0^{2p} t^{p(\eta-\kappa')}\;.
    \end{equs}
\end{prop}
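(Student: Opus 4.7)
The overall strategy is to treat this as a standard stochastic quantisation style computation, essentially identical to the space–time white noise case since $|\phi_k|$ is uniformly bounded. I would first establish the $L^2$--Chaos estimates, then promote them to all $L^p$ moments via Nelson's hypercontractivity estimate, and finally use a Kolmogorov/Besov embedding argument to control suprema in both $x$ and $t$.

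For the convergence \eqref{e:wick_product}, the plan is to expand $\Wick{X_N^{\otimes 2}}$ in Fourier modes: each mode is a double stochastic integral of the form $\sum_{k+\ell=m,\,|k|,|\ell|\leq N} \phi_k\bar\phi_\ell \int_0^t\int_0^t e^{-|k|^2(t-s_1)-|\ell|^2(t-s_2)}\,dB_k(s_1)\,dB_{-\ell}(s_2)\,\frac{k^\perp \otimes_s \ell^\perp}{|k||\ell|}$, minus the diagonal expectation contribution. Computing the $L^2(\Omega)$ norm of the $j$-th Littlewood--Paley block gives a sum that, thanks to the boundedness of $|\phi_k|$, is bounded by $C\,2^{2\kappa j}$ uniformly in $N$ and $t$, and the corresponding differences $\Delta_j(\Wick{X_N^{\otimes 2}} - \Wick{X_M^{\otimes 2}})$ decay as $N,M\to\infty$ by dominated convergence in the frequency sum. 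Nelson's estimate (together with the fact that these live in the second Wiener chaos) upgrades second moments to $(2p)$-th moments. A Besov version of the Kolmogorov criterion — computing the time increment $\mathbb{E}|\Delta_j(\Wick{X^{\otimes 2}}[t]-\Wick{X^{\otimes 2}}[s])|^{2p}$ and summing over $j$ with a loss of $\kappa$ — then gives convergence in $C([0,T];\cC^{-\kappa})$. The divergence then follows immediately.

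For the short-time bounds \eqref{e:linear_solution_short_time} and \eqref{e:wick_product_short_time}, the small parameter $t^{\eta-\kappa'}$ must come from the variance estimate with the $1\wedge 2^{2j}t$ factor already established in Lemma~\ref{le:stochastic_object} and its analogue for the Wick square. My plan is to embed $\cC^{-2\eta}\hookleftarrow \bB^{-2\eta+\kappa'}_{2p,2p}$ for $p$ large, and to bound
\begin{equ}
\mathbb{E}\|X[t]\|_{\bB^{-2\eta+\kappa'}_{2p,2p}}^{2p}
\lesssim \sum_{j\geq -1} 2^{(-2\eta+\kappa')\cdot 2pj}\,(Cp\alpha_0^2)^p (1\wedge 2^{2j}t)^p\;,
\end{equ}
splitting the sum at $2^{2j}\sim t^{-1}$. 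The low-frequency piece is geometric with rate $2^{2p(\kappa'-2\eta+1)j}t^p$ summed up to $2^{2j}\sim t^{-1}$, while the high-frequency piece is $\sum_{2^{2j}\geq t^{-1}} 2^{(-2\eta+\kappa')2pj}$; both balance at order $t^{p(2\eta-\kappa')}$. Replacing $\eta$ by $\eta/2$ (and absorbing the numerical factor into $\kappa'$) delivers the advertised exponent. Supremum in $s\leq t$ is handled by the same Kolmogorov argument applied to the increments, which also give a short-time factor. The Wick square case is analogous, using the analogue of Lemma~\ref{le:stochastic_object} at the level of $\Wick{X^{\otimes 2}}$ combined with the divergence theorem to lose one derivative.

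The main obstacle is really just bookkeeping: keeping track of the dependence on $p$ (which must be polynomial in $p$ to be useful for \eqref{e:wick_product_short_time}) and picking the right Besov scale so that both the regularity and the small-time exponents come out with the correct constants. Everything else is a standard second-chaos computation; the fact that our noise is only a perturbation of space–time white noise — with $\sup_k|\phi_k|<\infty$ by Assumption~\ref{as:noise} — changes none of the convergent sums and introduces only an overall factor $\alpha_0^{2p}$ in the Wick case (resp.\ $\alpha_0^p$ in the linear case), matching what is stated. For this reason, as the authors note, one may legitimately defer to the computation in \cite[Appendix]{DPD02} with only a cosmetic modification.
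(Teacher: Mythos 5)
Your proposal is essentially the same approach as the paper's, which in fact gives no proof at all but simply writes ``We omit the proof\ldots and refer the reader to \cite[Appendix]{DPD02}''; your outline is a faithful reconstruction of that second-chaos / Nelson / Kolmogorov--Besov computation, adapted to the noise under Assumption~\ref{as:noise} exactly as the boundedness of $|\phi_k|$ permits. One cosmetic remark: the ``replace $\eta$ by $\eta/2$'' step is not needed and is slightly off (applying your calculation with $\eta/2$ controls $\cC^{-\eta}$, not $\cC^{-2\eta}$); your fixed-time estimate already gives $\cC^{-2\eta}$ with $t^{p(2\eta-\kappa')}$, which is \emph{stronger} than stated, and the mild loss needed to pass to the supremum via Garsia--Rodemich--Rumsey for large $p$ (then Jensen for small $p$) still lands comfortably within the advertised $t^{p(\eta-\kappa')}$.
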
\subsection{Energy estimate}
\label{sec:energy_estimate}
This subsection is devoted to proving Proposition~\ref{pr:main_bound}. We start with an estimate on $\wh$, defined in \eqref{e:equ_wh}.
\begin{lem}
\label{le:wh_bound}
    Let $T$ be the stopping time \eqref{e:stopping_time} and let $K$ be given by \eqref{e:defK}. For any $\alpha \geq \frac{1}{10}$ and $\gamma \in \R$ we have
    \begin{equ}
    	\label{e:wh_besov}
	\sup_{t\leq T} \|\wh\|_{\Besov{\gamma}{3, \infty}} \lesssim \lambda_+^{1-\alpha} \sup_{t\leq T} \|\hH_K \Tilde X\|_{\Besov{\gamma - 1 + \alpha}{4, \infty}}\;. 
    \end{equ}
\end{lem}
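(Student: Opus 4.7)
The plan is to apply parabolic regularity to $\wh$ (which solves the heat equation with zero initial data and forcing $-2\P\div(w\para\hH_K\Tilde X)$), then use a paraproduct estimate to split $w$ and $\hH_K\Tilde X$ via the Hölder pair $(L^{12},L^4)$, and finally exploit the frequency localisation of $\hH_K$ to trade a factor $K^{-\alpha}$ against the growth of $\|w\|_{L^{12}}$ by virtue of the calibration \eqref{e:defK} of $K$.

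First, from the Duhamel formula $\wh[t]=-2\int_0^t e^{(t-s)\Delta}\P\div(w\para\hH_K\Tilde X)[s]\,\md s$ I estimate blockwise: using $\|\Delta_j\P\div g\|_{L^3}\lesssim 2^j\|\Delta_j g\|_{L^3}$ and \eqref{e:heat_flow1} gives, for $j\geq 0$, $\|\Delta_j\wh[t]\|_{L^3}\lesssim 2^j\sup_{s\leq T}\|\Delta_j(w\para\hH_K\Tilde X)[s]\|_{L^3}\int_0^t e^{-c(t-s)2^{2j}}\,\md s\lesssim 2^{-j}\sup_{s\leq T}\|\Delta_j(\cdots)[s]\|_{L^3}$, uniformly in $t$. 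The $\Delta_{-1}$ block is handled the same way after noting that $w\para\hH_K\Tilde X$ is mean-zero (the frequency supports of $\Delta_i w$ and $\Delta_j\hH_K\Tilde X$ in any paraproduct summand are disjoint because $i\leq j-2$), so that only modes with $|k|\geq 1$ contribute and one has the decay $e^{-c(t-s)}$. Taking the supremum over $j$ yields
\begin{equ}
\sup_{t\leq T}\|\wh\|_{\Besov{\gamma}{3,\infty}}\lesssim \sup_{t\leq T}\|w\para\hH_K\Tilde X\|_{\Besov{\gamma-1}{3,\infty}}\;.
\end{equ}

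Next I apply the paraproduct estimate \eqref{e:para1} with $(p_1,p_2)=(12,4)$ (so $\tfrac{1}{12}+\tfrac14=\tfrac13$) to obtain
\begin{equ}
\|w\para\hH_K\Tilde X\|_{\Besov{\gamma-1}{3,\infty}}\lesssim \|w\|_{L^{12}}\,\|\hH_K\Tilde X\|_{\Besov{\gamma-1}{4,\infty}}\;.
\end{equ}
Since $\Delta_j\hH_K\Tilde X$ vanishes for $j<\log_2^+ K-1$, \eqref{e:LP_estimate} gives $\|\hH_K\Tilde X\|_{\Besov{\gamma-1}{4,\infty}}\lesssim K^{-\alpha}\|\hH_K\Tilde X\|_{\Besov{\gamma-1+\alpha}{4,\infty}}$ for any $\alpha\in\R$.

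The final step is to bound $\|w\|_{L^{12}}K^{-\alpha}$ by $\lambda_+^{1-\alpha}$ pointwise in time, which is where the definition \eqref{e:defK} is used. From $K\geq \|w\|_{L^{12}}^{100}/\lambda_+^{99}$ I read off $\|w\|_{L^{12}}\leq K^{1/100}\lambda_+^{99/100}$. Combined with $K\geq\lambda_+\geq 1$ and $\alpha\geq \tfrac{1}{10}>\tfrac{1}{100}$ (so that the exponent $\tfrac{1}{100}-\alpha$ is negative and $K^{1/100-\alpha}\leq \lambda_+^{1/100-\alpha}$), this yields $\|w\|_{L^{12}}K^{-\alpha}\leq \lambda_+^{1-\alpha}$, and chaining the four estimates gives \eqref{e:wh_besov}. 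There is no serious obstacle here; the only bookkeeping point is that the threshold $\alpha\geq \tfrac{1}{10}$ is not intrinsic but is engineered into \eqref{e:defK} so that the two competing effects (growth of $\|w\|_{L^{12}}$ versus gain of $K^{-\alpha}$) cancel for any such $\alpha$.
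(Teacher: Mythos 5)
Your proof is correct and follows essentially the same route as the paper's: the paper applies its maximal regularity estimate \eqref{e:Schauder1} directly where you re-derive it from Duhamel plus \eqref{e:heat_flow1}, then both use the paraproduct estimate \eqref{e:para1} with the $(L^{12},L^4)$ split, the frequency-localisation gain $K^{-\alpha}$ via \eqref{e:LP_estimate}, and the calibration \eqref{e:defK} of $K$ in the same way. The extra care you take with the $\Delta_{-1}$ block and the explicit algebra $\|w\|_{L^{12}}K^{-\alpha}\leq K^{1/100-\alpha}\lambda_+^{99/100}\leq\lambda_+^{1-\alpha}$ are details the paper leaves implicit but are exactly what is meant.
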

\begin{rmk}
	Here the choice of $3, 4$ in the Besov indices are related to the $L^{12}$ norm in the definition of $K$.
\end{rmk}
\begin{proof}
	By maximal regularity \eqref{e:Schauder1}, 
	\begin{equ}
		\sup_{t\leq T} \|\wh\|_{\Besov{\gamma}{3, \infty}} \lesssim \sup_{t\leq T} \|\P \div (w\para \hH_K \Tilde X)\|_{\Besov{\gamma-2}{3, \infty}} \lesssim \sup_{t\leq T} \|w \para \hH_K \Tilde X\|_{\Besov{\gamma - 1}{3, \infty}}\;.
	\end{equ}
	By the paraproduct estimate \eqref{e:para1}, we see that for $t \le T$ we have 
	\begin{equs}
		\|w \para \hH_K \Tilde X\|_{\Besov{\gamma - 1}{3, \infty}} &\lesssim \|w\|_{L^{12}} \|\hH_K \Tilde X\|_{\Besov{\gamma - 1}{4, \infty}} \\
		&\leq \frac{\|w\|_{L^{12}}}{K^{\alpha}} \|\hH_K \Tilde X\|_{\Besov{\gamma - 1 + \alpha}{4, \infty}} \leq \lambda_+^{1-\alpha} \|\hH_K \Tilde X\|_{\Besov{\gamma - 1 + \alpha}{4, \infty}} \;,
	\end{equs}
	where we used $\alpha \geq \frac{1}{10}$. The bound \eqref{e:wh_besov} follows. 
    \end{proof}

\begin{cor}
\label{cor:wl_bound}
	Let $T$ be the stopping time \eqref{e:stopping_time}. There exists a constant $C$ such that for any $0\leq t \leq T$, we have 
	$\|\wl[t]\| \leq C\lambda_+$.
\end{cor}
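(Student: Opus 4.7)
The plan is to unfold the definition of $\wl$ and estimate each summand separately. By construction $\wl = \Tilde{w} - \wh$ and $\Tilde{w} = w + \lL_{\lambda}\Tilde{X}$, so the triangle inequality gives
\begin{equ}
\|\wl[t]\| \leq \|w[t]\| + \|\lL_{\lambda}\Tilde{X}[t]\| + \|\wh[t]\|\;,
\end{equ}
and it suffices to bound each of these three by a multiple of $\lambda_+$ uniformly for $t \le T$.

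The first two terms are essentially immediate from the information already in hand. For $\|w[t]\|$ I would just quote the definition of the stopping time \eqref{e:stopping_time} together with $\alpha_0 \leq 1$, which yields $\|w[t]\| \le 4\lambda_+$. For $\|\lL_{\lambda}\Tilde{X}[t]\|$, the key point is that $\lL_{\lambda}$ only retains frequencies of size $\lesssim \lambda_+$. A Littlewood--Paley summation (using \eqref{e:LP_estimate} with at most $\log_2 \lambda_+$ active dyadic blocks) then yields $\|\lL_{\lambda}\Tilde{X}\|_{L^2} \lesssim \lambda_+^{2\kappa}\|\Tilde{X}\|_{\cC^{-\kappa}}$, and since $\|\Tilde{X}\|_{\cC^{-\kappa}} \leq c_0\alpha_0$ before time $T$ and $2\kappa < 1$, this is bounded by $C\alpha_0\lambda_+ \leq C\lambda_+$.

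For the third term I would apply Lemma~\ref{le:wh_bound} with a small parameter $\gamma > 0$ and with $\alpha = 1 - \kappa - \gamma$, so that $\gamma - 1 + \alpha = -\kappa$ and $\alpha \geq \frac{1}{10}$ (which holds for $\kappa, \gamma$ small enough). The lemma then gives
\begin{equ}
\sup_{t \leq T}\|\wh\|_{\Besov{\gamma}{3,\infty}} \lesssim \lambda_+^{\kappa + \gamma} \sup_{t \leq T}\|\hH_K \Tilde{X}\|_{\Besov{-\kappa}{4,\infty}} \lesssim \lambda_+^{\kappa + \gamma}\|\Tilde{X}\|_{\cC^{-\kappa}}\lesssim \alpha_0\lambda_+^{\kappa+\gamma}\;,
\end{equ}
using the trivial embedding $\cC^{-\kappa} \hookrightarrow \Besov{-\kappa}{4,\infty}$ on the compact torus. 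To pass from $\Besov{\gamma}{3,\infty}$ to $L^2$, I would use the chain $\Besov{\gamma}{3,\infty} \hookrightarrow \Besov{0}{3,1} \hookrightarrow L^3 \hookrightarrow L^2$, where the last embedding uses that $\T^2$ has finite volume. This gives $\|\wh[t]\| \lesssim \alpha_0\lambda_+^{\kappa+\gamma} \leq \lambda_+$, and summing the three contributions closes the estimate.

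The only mildly subtle point is the last embedding: Lemma~\ref{le:wh_bound} naturally delivers $\wh$ in $L^3$-based Besov spaces (a feature inherited from the $L^{12}$ weight in the definition \eqref{e:defK} of $K$), whereas one might expect to need $L^2$-based spaces for an energy-type statement. I expect this to be harmless here because we live on a compact domain, but it is worth flagging since the same issue will presumably reappear whenever the $L^{12}$ scale is traded for an $L^2$ control in later sections.
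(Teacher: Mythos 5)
Your proof is correct and follows essentially the same route as the paper: bound $\|w\|$ from the stopping-time definition, bound $\|\wh\|$ via Lemma~\ref{le:wh_bound} and a chain of Besov embeddings into $L^2$. In fact your write-up is slightly more careful than the paper's: the paper's own proof starts from ``$\wl = w - \wh$'', whereas the definitions give $\wl = \Tilde w - \wh = w + \lL_\lambda\Tilde X - \wh$, so there is an extra term $\lL_\lambda\Tilde X$ which you correctly identify and control via the low-frequency Littlewood--Paley sum; the paper simply omits it (the omission is harmless since that term is of order $\lambda_+^{\kappa}\alpha_0$, but your version is the complete one).
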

\begin{proof}
	Since $\wl = w - \wh$, by triangle inequality and definition of $T$, for $t\leq T$ we get 
	\begin{equ}
		\|\wl[t]\| \leq \|w[t]\| + \|\wh[t]\| \leq C\lambda_+ + \|\wh[t]\|\;.
	\end{equ}
	Combining it with Besov embedding and Lemma~\ref{le:wh_bound} we get 
	\begin{equ}
		\sup_{t\leq T} \|\wh\| \leq \sup_{t\leq T} \|\wh\|_{\Besov{\kappa}{3, \infty}} \leq C\lambda_+^{2\kappa}\sup_{t\leq T} \|\hH_K\Tilde X\|_{\Besov{-\kappa}{4, \infty}} \leq C\lambda_+^{2\kappa}\sup_{t\leq T} \| \Tilde X\|_{\Besov{-\kappa}{4, \infty}} \;.
	\end{equ}
	Note that by definition of $T$ we have $\sup_{t\leq T}\|\Tilde X[t]\|_{\cC^{-\kappa}} \leq C$. The result follows.
\end{proof}

\begin{lem}
\label{le:high_freq_func}
	Let $p \in [1, \infty]$, $\alpha \in \R$ and $\eps > 0$. There exists some constant $C$ such that for any $M \geq 1$, we have
	\begin{equ}
	\label{e:high_freq_func}
		\| \hH_M f \|_{\Besov{\alpha - \eps}{p, 1} } \leq C M^{-\eps} \| f \|_{\Besov{\alpha}{p, \infty}}\;.
	\end{equ}
\end{lem}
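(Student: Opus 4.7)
The plan is to reduce the bound directly to the Littlewood--Paley definition of the Besov norms. By Definition~\ref{def:Hlambda}, $\hH_M f = \sum_{i \geq 0} \Delta_{i + \lceil\log_2^+ M\rceil} f$, so the Fourier support of $\hH_M f$ is contained in $\{|\xi| \gtrsim M\}$. In particular, setting $m := \lceil\log_2^+ M\rceil$, one has $\Delta_j(\hH_M f) = 0$ unless $j \geq m - C_0$ for some universal $C_0$ (coming from the overlap of the supports of the partition of unity), and for such $j$ we simply have $\Delta_j(\hH_M f) = \Delta_j f$ up to finitely many neighbouring blocks.

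First I would unfold the $\Besov{\alpha-\eps}{p,1}$ norm:
\begin{equ}
\|\hH_M f\|_{\Besov{\alpha - \eps}{p, 1}} = \sum_{j \geq -1} 2^{j(\alpha - \eps)} \|\Delta_j \hH_M f\|_{L^p} \lesssim \sum_{j \geq m - C_0} 2^{j(\alpha - \eps)} \|\Delta_j f\|_{L^p}\;,
\end{equ}
where the restriction of the summation range uses the frequency localisation of $\hH_M f$ noted above. Next I would use the trivial bound coming from the definition of the $\Besov{\alpha}{p,\infty}$ norm, namely
\begin{equ}
\|\Delta_j f\|_{L^p} \leq 2^{-j\alpha} \|f\|_{\Besov{\alpha}{p, \infty}}\;,
\end{equ}
to obtain
\begin{equ}
\|\hH_M f\|_{\Besov{\alpha - \eps}{p, 1}} \lesssim \|f\|_{\Besov{\alpha}{p, \infty}} \sum_{j \geq m - C_0} 2^{-j\eps} \lesssim 2^{-m\eps} \|f\|_{\Besov{\alpha}{p, \infty}} \lesssim M^{-\eps} \|f\|_{\Besov{\alpha}{p, \infty}}\;,
\end{equ}
where the geometric sum is summable precisely because $\eps > 0$, and $2^{-m\eps} \lesssim M^{-\eps}$ by definition of $m$.

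There is no real obstacle here: the assertion is essentially the observation that summing a geometric tail starting at frequency $\sim M$ costs only $M^{-\eps}$. The only bookkeeping subtlety is making sure that the indexing shift in $\hH_{M,i} = \Delta_{i + \log_2^+ M}$ together with the almost-orthogonality of the Littlewood--Paley blocks is handled cleanly, which is absorbed into the universal constant $C_0$ above and the implicit constants.
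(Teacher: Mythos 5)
Your proof is correct and takes essentially the same route as the paper: unfold the $\Besov{\alpha-\eps}{p,1}$ norm, use the frequency localisation of $\hH_M f$ to restrict the sum to $j \gtrsim \log_2 M$, extract $\|f\|_{\Besov{\alpha}{p,\infty}}$ via the trivial bound $\|\Delta_j f\|_{L^p} \leq 2^{-j\alpha}\|f\|_{\Besov{\alpha}{p,\infty}}$, and sum the geometric tail to pick up $M^{-\eps}$. You are slightly more careful than the paper's proof about the boundary blocks (the paper writes the restricted sum as an equality with $\Delta_j f$, whereas you correctly flag the overlap of Littlewood--Paley supports and absorb it into a constant $C_0$), but this is a bookkeeping refinement rather than a different argument.
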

\begin{rmk}
	This estimate improves \cite[Lemma~4.3]{HR24} since it enhances the integrability in the second index of the Besov norm.
\end{rmk}
\begin{proof}
	By the definition of Besov norm, the requirement for $f$ and H\"older's inequality, we have 
	\begin{equs}
		\|\hH_M f \|_{\Besov{\alpha - \eps}{p, 1} } &= \sum_{j \geq \log_2(M) - 1} 2^{(\alpha - \eps)j} \|\Delta_j f \|_{L^p} \\
		&\lesssim \| f \|_{\Besov{\alpha}{p, \infty}} \sum_{j \geq \log_2(M) - 1} 2^{- \eps j } 
		\lesssim M^{-\eps} \| f\|_{\Besov{\alpha}{p, \infty}} \;.
	\end{equs}
	The bound \eqref{e:high_freq_func} is proved.
\end{proof}

We now have the ingredients in place to prove Proposition~\ref{pr:main_bound}, which gives an upper bound for $\wl$.
\begin{proof}[Proof of Proposition~\ref{pr:main_bound}]
Note that 
\begin{equ}
	\frac{d}{dt}\left(e^{\frac{t}{2}} \|\wl\|^2\right) = e^{\frac{t}{2}} \Big(\frac{1}{2}\|\wl\|^2 + \frac{d}{dt}\|\wl\|^2\Big)\;.
\end{equ}
Combining this with \eqref{e:energy_estimate}, we get 
\minilab{e:energy_estimate2}
\begin{equs}
    e^{\frac{T\wedge a}{2}} &\|\wl[T\wedge a]\|^2 - \|w^{\lL}[0]\|^2\\ 
    &=\int_0^{T\wedge a} e^{\frac{t}{2}} \Bigg[ -\|\grad \wl\|^2 + \frac{1}{2} \|\wl\|^2+ \bracket{\wl, \circ\, (\lL_{\lambda} \xi_1 + \xi_2)} + \bracket{\wl, \rR}\qquad \\
     &\qquad \qquad + \bracket{\grad\wl,  2 w \para \pP_{\lambda, K} \Tilde{X}} + \bracket{\grad \wl, 2\wl\otimes_s \wh} \Bigg] \md t \;. 
\end{equs}
The Itô--Stratonovich correction term is bounded by $\alpha_0^2 \lambda^2 + C$ and, after taking
expectations, the martingale term will vanish. We also bound $e^{\frac{t}{2}}$ by a constant $e^{1/2}$ since $T\leq 1$. Therefore, it suffices to prove that, for some constant $C>0$,
\begin{equs}
\label{e:energy_estimate2_large_freq}
	\EE \int_0^{T\wedge a}\left(-\delta \|\grad \wl\|^2 + |\bracket{\grad \wl, 2\wl\otimes_s \wh}| \right) \md t &\leq C \alpha_0 \cdot a^{1-6\kappa} \lambda_+^2 + C a  \;, \quad \\
\label{e:energy_estimate2_middle_freq}
	\EE \int_0^{T\wedge a}\left(-\delta \|\grad \wl\|^2 + |\bracket{\grad \wl, w \para \pP_{\lambda, K} \Tilde X} | \right) \md t &\leq C \alpha_0 \cdot a^{1-6\kappa} \lambda_+^2 + C a \;, \qquad\\
	\EE \int_0^{T\wedge a}\left(-\delta \|\grad \wl\|^2 + |\bracket{\wl, \rR}|\right) \md t &\leq C \alpha_0 \cdot a^{1-6\kappa} \lambda_+^2 + C a \;. \qquad 
\end{equs}
For the last term, we actually prove the stronger bound
\begin{equ}
	\label{e:energy_estimate2_small}
	\sup_{0 \leq t \leq T\wedge a} \left(-\delta \|\grad \wl\|^2 + |\bracket{\wl, \rR}|\right)  \leq C \alpha_0 \lambda_+^2 + C\;.
\end{equ}
\step{Proof of (\ref{e:energy_estimate2_large_freq}):} By duality, H\"older inequality and interpolation, we get 
\begin{equs}
	\label{e:large_freq1}
	|\bracket{\grad \wl, \wl \otimes_s \wh}| &\leq \|\grad \wl\| \|\wl \otimes \wh\| \\
	&\lesssim \|\grad \wl\| \|\wl\|_{L^6} \|\wh\|_{L^3} \lesssim \|\grad \wl\|^{5/3} \|\wl\|^{1/3} \|\wh\|_{L^3} \;.
\end{equs}
Corollary~\ref{cor:wl_bound} and Young's inequality allow us to bound this by
\begin{equ}
	\delta\|\grad \wl\|^2 + C \lambda_+^2 \|\wh\|^6_{L^3}\;,
\end{equ}
so that it suffices to show that 
\begin{equ}
	\EE \int_0^{T\wedge a} \|\wh\|^6_{L^3} \md t \leq \EE \int_0^{T\wedge a} \|\wh\|^6_{\Besov{0}{3, 1}} \md t \leq C \alpha_0 \cdot a^{1-6\kappa} \;.
\end{equ}
Note that $\wh$ only has frequencies larger than $\lambda_+$, by Lemma~\ref{le:high_freq_func}, the maximal regularity estimate \eqref{e:Schauder2}, the paraproduct estimate \eqref{e:para1}, and 
the bound for Littlewood--Paley projections \eqref{e:LP_estimate}, we get that 
\begin{equs}
	\int_0^{T\wedge a}\|\wh\|_{\Besov{0}{3, 1}}^6 \md t &\lesssim \int_0^{T\wedge a} \lambda_+^{-3/2} \|\wh\|_{\Besov{1/4}{3, \infty}}^6 \md t \\
	&\lesssim \lambda_+^{-3/2} \int_0^{T\wedge a} \|\P \div(w \para \hH_K \Tilde X)\|_{\Besov{-7/4}{3, \infty}}^6 \md t  \\
	&\lesssim \lambda_+^{-3/2} \int_0^{T \wedge a} \|w\|_{L^{12}}^6 \|\hH_K \Tilde X\|_{\Besov{-3/4}{4, \infty}}^6 \md t \\
	\label{e:large_freq2}
	&\lesssim \lambda_+^{-3/2} \int_0^{T \wedge a} \|w\|_{L^{12}}^6 K^{-\frac{3}{2}} \|\hH_K \Tilde X\|_{\Besov{-1/2}{4, \infty}}^6 \md t\;.
\end{equs}
Recall that by definition, $\frac{K}{\lambda_+} \geq \left(\frac{\|w\|_{L^{12}}}{\lambda_+}\right)^{100}$. Using this, Besov embedding and Corollary~\ref{cor:lambda_norm} we get that 
\begin{equs}
	\EE \int_0^{T\wedge a} \|\wh\|^6_{\Besov{0}{3, 1}} \md t &\lesssim \lambda_+^3 \EE \int_0^{T\wedge a} \|\hH_K \Tilde X\|_{\Besov{-1/2}{4, 6}}^6 \md t \\
	\label{e:large_freq3}
	&\leq \lambda_+^3 \EE \int_0^{T\wedge a} \|\hH_K \Tilde X\|_{\Besov{-1/2}{6, 6}}^6 \md t \lesssim \alpha_0^6 a^{1-6\kappa}\;.
\end{equs}
The bound is proved, since $\alpha_0 \leq 1$.

\step{Proof of (\ref{e:energy_estimate2_middle_freq}):} Setting $\beta = (\frac{1}{2} - \frac{500\kappa}{3})^{-1}$, we claim that for any $0 \leq t \leq T\wedge a$ we can prove
\begin{equ}
\label{e:middle_freq1}
	|\bracket{\grad \wl, w\para \pP_{\lambda, K} \Tilde X}| \lesssim \delta \|\grad \wl\|_+^2 + \lambda_+^2 \left(\lambda_+^{\kappa} \|\hH_{\lambda} \Tilde X\|_{\Besov{-\kappa}{4, \infty}} \right)^\beta \;.
\end{equ}
Once \eqref{e:middle_freq1} is shown, it follows from Besov embedding that 
\begin{equ}
	\EE \int_0^{T\wedge a} \left(\lambda_+^{\kappa} \|\pP_{\lambda, K} \Tilde X\|_{\Besov{-\kappa}{4, \infty}} \right)^{\beta} 
	\lesssim \EE \int_0^a \lambda_+^{\kappa \beta} \|\hH_{\lambda} \Tilde X\|^{\beta}_{\Besov{-\kappa}{4, \beta}} \md t \;.
\end{equ}
The bound \eqref{e:energy_estimate2_middle_freq} then follows from Corollary~\ref{cor:lambda_norm}, combining with the fact that $a, \alpha_0 \leq 1$.

Now we focus on proving \eqref{e:middle_freq1}. When $\|w\|_{L^{12}}\leq 1$, the left hand side is zero since $\pP_{\lambda, K} = 0$. Therefore we only need to consider the case where $\|w\|_{L^{12}} > 1$, which means $K = \frac{\|w\|_{L^{12}}^{100}}{\lambda_+^{99}}$. 
By duality, paraproduct estimate \eqref{e:para1}, we get 
\begin{equ}
	|\bracket{\grad \wl, w\para \pP_{\lambda, K} \Tilde X}| \leq \|\grad \wl\| \|w \para \pP_{\lambda, K} \Tilde X\| 
	\leq \|\grad \wl\| \|w\|_{L^4} \|\pP_{\lambda, K} \Tilde X\|_{\Besov{0}{4, 2}} \;. 
\end{equ}
By Lemma~\ref{le:high_freq_func} and the bound for the Littlewood--Paley projections \eqref{e:LP_estimate}, we get
\begin{equ}
	 \|\pP_{\lambda, K} \Tilde X\|_{\Besov{0}{4, 2}} \lesssim \lambda_+^{-\kappa} \|\pP_{\lambda, K} \Tilde X\|_{\Besov{\kappa}{4, \infty}} \lesssim \lambda_+^{-\kappa} K^{2\kappa} \|\pP_{\lambda, K} \Tilde X\|_{\Besov{-\kappa}{4, \infty}} \;.
\end{equ}
By triangle inequality we get 
\begin{equs}
	\|\grad \wl\| \|w\|_{L^4} \lambda_+^{-\kappa} K^{2\kappa} &= \|\grad \wl\| \lambda_+^{-199\kappa} \|w\|^{200\kappa}_{L^{12}} \|w\|_{L^4} \\
	&\leq \|\grad \wl\| \lambda_+^{-199\kappa} \left(\|\wl\|_{L^{12}}^{200\kappa} + \|\wh\|_{L^{12}}^{200\kappa} + \|\lL_{\lambda} \Tilde X\|_{L^{12}}^{200\kappa}\right) \\
	& \qquad \qquad \qquad \qquad \left(\|\wl\|_{L^4} + \|\wh\|_{L^4} + \|\lL_{\lambda} \Tilde X\|_{L^4} \right)\;.
\end{equs}
By Besov embedding and Lemma~\ref{le:wh_bound}, before $T$ we have
\begin{equs}
	\|\wh\|_{L^{12}} &\lesssim \|\wh\|_{\Besov{\frac{1}{2} + \kappa}{3, \infty}} \lesssim \lambda_+^{\frac{1}{2} + 2\kappa}\;, \\
	\|\wh\|_{L^4} &\lesssim \|\wh\|_{\Besov{\frac{1}{6} + \kappa}{3, \infty}}\lesssim \lambda_+^{\frac{1}{6} + 2\kappa}\;.
\end{equs}
By interpolation and Corollary~\ref{cor:wl_bound}, before $T$ we get
\begin{equs}
	\|\wl\|_{L^{12}}^{200\kappa} + \|\wh\|_{L^{12}}^{200\kappa} + \|\lL_{\lambda} \Tilde X\|_{L^{12}}^{200\kappa} &\lesssim \|\grad \wl\|^{\frac{500 \kappa}{3}} \|\wl\|^{\frac{100\kappa}{3}} + (\lambda_+^{\frac{1}{2} + 2\kappa})^{200\kappa} + \lambda_+^{200\kappa^2}\\
	&\lesssim \|\grad \wl\|_+^{\frac{500 \kappa}{3}} \lambda_+^{\frac{100\kappa}{3}}\;, \\
	\|\wl\|_{L^4} + \|\wh\|_{L^4} + \|\lL_{\lambda} \Tilde X\|_{L^4} &\lesssim \|\grad \wl\|^{1/2} \|\wl \|^{1/2} + \lambda_+^{\frac{1}{6} + 2\kappa} + \lambda_+^{\kappa}\\
	&\lesssim \|\grad \wl \|_+^{1/2} \lambda_+^{1/2}\;.
\end{equs}
With this we deduce that 
\begin{equs}
	|\bracket{\grad \wl, w\para \pP_{\lambda, K} \Tilde X}| &\lesssim \|\grad \wl\|_+^{\frac{3}{2} + \frac{500\kappa}{3}} \lambda_+^{\frac{1}{2} - \frac{500\kappa}{3}} \lambda_+^{\kappa} \|\pP_{\lambda, K} \Tilde X\|_{\Besov{-\kappa}{4, \infty}} \\
	&\leq \delta \|\grad \wl\|_+^2 + \lambda_+^2 \left(\lambda_+^{\kappa} \|\pP_{\lambda, K} \Tilde X\|_{\Besov{-\kappa}{4, \infty}} \right)^{\beta} \\
	&\leq \delta \|\grad \wl\|_+^2 + \lambda_+^2 \left(\lambda_+^{\kappa} \|\hH_{\lambda} \Tilde X\|_{\Besov{-\kappa}{4, \infty}} \right)^{\beta}  \;,
\end{equs}
thus proving \eqref{e:middle_freq1}. 

\step{Proof of (\ref{e:energy_estimate2_small}):} Recall that 
        \begin{equ}
            \rR = \P\div\left((\wh)^{\otimes2} + 2w\rpareq \hH_{\lambda} \Tilde{X} + 2w\otimes_s Y + Y^{\otimes2} - (\lL_{\lambda} \Tilde{X})^{\otimes2}\right)\;.
        \end{equ}
Since $\wl$ is divergence free, for any function $f$ we can use integration by parts to get $\bracket{\wl, \P\div(f)} = -
\bracket{\grad \wl, f}$. Combining this with  Hölder's inequality, we find that $|\bracket{\wl, \rR}|$ is bounded by
\begin{equ}
\label{e:rR_bound_1}
    \|\grad \wl\|\cdot\left(\|\wh\|_{L^3}\|\wh\|_{L^6}+2\|w\rpareq \hH_{\lambda} \Tilde{X}\| + 2\|w\|\|Y\|_{L^{\infty}} + \|Y\|_{L^4}^2 + \|\lL_{\lambda} \Tilde{X}\|_{L^4}^2\right).
\end{equ}
We bound these terms one by one. 
\smallstep{Term \TitleEquation{\|\wh\|_{L^3}\|\wh\|_{L^6}}{L3L6}:} By Besov embedding and Lemma~\ref{le:wh_bound} we get that before $T$, we have 
\begin{equ}
	\|\grad \wl\| \|\wh\|_{L^3}\|\wh\|_{L^6} \leq \delta \|\grad \wl\|^2 + C\|\wh\|_{L^3}^2 \|\wh\|_{L^6}^2 \leq \delta \|\grad \wl\|^2 + C\alpha_0^4 \lambda_+^{\frac{2}{3} + 4\kappa}\;.
\end{equ}

\smallstep{Term \TitleEquation{2\|w\rpareq \hH_{\lambda} \Tilde{X}\|}{|w > H_\lambda X|}:} 
By paraproduct estimates \eqref{e:para2}, \eqref{e:reso} and triangle inequality, before $T$ we have 
\begin{equ}
    \|w\rpareq \hH_{\lambda} \Tilde{X}\| \lesssim \|w\|_{\Besov{2\kappa}{4,2}} \|\hH_{\lambda} \Tilde{X}\|_{\Besov{-\kappa}{4,\infty}} \lesssim \alpha_0\left(\|\wl\|_{\Besov{2\kappa}{4,2}} + \|\wh\|_{\Besov{2\kappa}{4,2}} + \|\lL_{\lambda}\Tilde{X}\|_{\Besov{2\kappa}{4,2}} \right) \;.
\end{equ}
Then, by Lemma~\ref{le:wh_bound} and Besov embedding, before $T$ we have 
\begin{equ}
	\|\wh\|_{\Besov{2\kappa}{4,2}} \lesssim \|\wh\|_{\Besov{3\kappa + \frac{1}{6}}{3, \infty}} \lesssim \lambda_+^{\frac{1}{6} + 4\kappa}\;.
\end{equ}
Combining these we get 
\begin{equs}
	\|w\rpareq \hH_{\lambda} \Tilde{X}\| \lesssim \alpha_0 \left(\|\grad \wl\|^{\frac{1}{2} + 2\kappa} \|\wl\|^{\frac{1}{2} - 2\kappa} + \lambda_+^{\frac{1}{6} + 4\kappa} + \lambda_+^{3\kappa} \right)\lesssim \alpha_0 \|\grad \wl\|_+^{\frac{1}{2} + 2\kappa} \lambda_+^{\frac{1}{2} - 2\kappa}\;.
\end{equs}
The bound then follows by applying Young's inequality. 

\smallstep{Term \TitleEquation{2\|w\|\|Y\|_{L^{\infty}}}{2|w||Y|}:} Before time $T$ we have 
\begin{equ}
	\|\grad \wl\|\|w\| \|Y\|_{L^{\infty}} \leq \delta \|\grad \wl\|^2 + C \lambda_+^2 \alpha_0^4 \;.
\end{equ}

\smallstep{Term \TitleEquation{\|Y\|_{L^4}^2 + \|\lL_{\lambda} \Tilde{X}\|_{L^4}^2}{..}:} Before time $T$ we have 
\begin{equ}
	\|\grad \wl\| \big(\|Y\|_{L^4}^2 + \|\lL_{\lambda} \Tilde{X}\|_{L^4}^2\big) \leq \|\grad \wl\| \big(\alpha_0^4 + \lambda_+^{2\kappa} \alpha_0^2 \big) \leq \delta \|\grad \wl\|^2 + C \alpha_0^4 \lambda_+^{4\kappa}\;.
\end{equ}
This gives the proof of \eqref{e:energy_estimate2_small}. Combining the discussions above, we complete the proof of Proposition~\ref{pr:main_bound}. 
\end{proof}

\subsection{Lower bound for stopping time}
In the rest of this section we show how to upgrade the main bound Proposition~\ref{pr:main_bound} to \eqref{e:main_result1}. First, we prove the following tail bound for $T^{-1}$. Recall the definition of $T$ in \eqref{e:stopping_time}. 
\begin{prop}
\label{pr:stopping_time_lower_bound}
    Suppose the initial data has a decomposition $u = \ur + \us$ with $\|\ur\|_{\cC^{-\kappa}} \leq 2\alpha_0$ and $\|\us\| = \lambda$\,, then for any $a > 0$ sufficiently small, we have 
    \begin{equ}
    \label{e:stopping_time_lower_bound}
        \PP(T^{-1} > a^{-1})\lesssim a^{\frac{1}{8}-\kappa}\;,
    \end{equ}
    where the proportional constant is independent of $\lambda$, $\alpha_0$ and $a$\,.
\end{prop}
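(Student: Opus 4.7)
The plan is to decompose the event $\{T < a\}$ according to which of the three conditions defining $T$ fails first and bound each piece separately via Markov's inequality. Write $\{T < a\} \subseteq E_X \cup E_Y \cup E_w$ with $E_X = \{\sup_{t \leq a}\|X[t]\|_{\cC^{-\kappa}} > \alpha_0\}$, $E_Y = \{\sup_{t \leq a}\|Y[t]\|_{L^\infty} > \alpha_0^2\}$ and $E_w = \{\sup_{t \leq T \wedge a}\|w[t]\| > 2V_{\alpha_0}(u[0]) \vee 2\}$. For $E_X$, applying \eqref{e:linear_solution_short_time} with $\eta$ slightly above $\kappa/2$ and $p$ large yields, by Markov,
\begin{equ}
\PP(E_X) \leq \alpha_0^{-p}\, \EE \sup_{t\leq a}\|X[t]\|_{\cC^{-\kappa}}^p \lesssim a^{p(\kappa/2 - \kappa')}\,,
\end{equ}
which can be made an arbitrarily large positive power of $a$. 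For $E_Y$, since $Y$ solves the linear equation \eqref{e:Y_equ} with $Y[0]=0$ and forcing driven by $\Tilde X$ and $\Wick{\Tilde X^{\otimes 2}}$, a standard Schauder--Gr\"onwall bootstrap combined with \eqref{e:linear_solution_short_time}, \eqref{e:wick_product_short_time} and Markov yields an analogous tail bound with a positive power of $a$.

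The main task is $E_w$. Using the decomposition $w = \wl + \wh - \lL_\lambda \Tilde X$, I would split the sup accordingly. The $\wh$ contribution is controlled using Lemma~\ref{le:wh_bound} and Besov embedding together with Corollary~\ref{cor:lambda_norm}, while the $\lL_\lambda \Tilde X$ contribution follows from the heat-flow estimate \eqref{e:heat_flow2} applied to $e^{t\Delta}\ur[0]$ plus the $p$-th moment control of $\lL_\lambda X$; both yield $L^p(\Omega)$-norms small as a positive power of $a$. For $\sup\|\wl\|$ I would upgrade Proposition~\ref{pr:main_bound} to a uniform-in-time statement by rerunning the It\^o argument of Section~\ref{sec:energy_estimate} while keeping the supremum inside the expectation and invoking BDG on the martingale $M(t) = 2\int_0^t\bracket{\wl, \lL_\lambda \xi_1 + \xi_2}$. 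Its quadratic variation is at most $4\alpha_0^2\int_0^t\|\wl\|^2 \md s$, which before $T$ is bounded by $C\alpha_0^2\lambda_+^2 t$ by Corollary~\ref{cor:wl_bound}, so BDG gives $\EE\sup_{t\leq T\wedge a}|M(t)|\lesssim \alpha_0 \lambda_+\sqrt a$. Combined with the drift estimates already derived in the proof of Proposition~\ref{pr:main_bound}, this shows that $\EE\bigl(\sup_{t\leq T\wedge a}\|\wl[t]\|^2 - \|\wl[0]\|^2\bigr)$ is of order a positive power of $a$ times $\lambda_+^2$. Since $\|w[0]\| = \lambda$ is strictly below the threshold $2V_{\alpha_0}(u[0]) \vee 2$, Markov's inequality applied to the resulting fluctuation delivers the required bound on $\PP(E_w)$.

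\textbf{Main obstacle.} The chief difficulty is the passage from the terminal-time estimate in Proposition~\ref{pr:main_bound} to a sup-in-time estimate strong enough to compete with the deviation threshold, together with the bookkeeping of powers of $a$ and $\lambda_+$ that this forces. The three error terms surviving the Markov step — the drift contribution $\alpha_0 a^{1-6\kappa}\lambda_+^2$, the martingale fluctuation of order $\alpha_0 \lambda_+ \sqrt a$, and the $O(a)$ It\^o correction — each need to be divided by an appropriate power of the deviation scale $\lambda_+^2$ and balanced against one another as well as against the tail thresholds for $E_X$ and $E_Y$. It is this balancing (after optimisation of the exponent $p$ used in the noise tails) that dictates the exponent $\tfrac{1}{8} - \kappa$ appearing in \eqref{e:stopping_time_lower_bound}; a naive Chebyshev estimate on the second moment alone is insufficient, since $\|w[0]\|^2 \sim \lambda^2$ is already comparable to the squared threshold.
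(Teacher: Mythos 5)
Your decomposition of $\{T < a\}$ into three failure modes is the right skeleton, but two points diverge from the paper in ways that matter.

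First, the BDG upgrade of Proposition~\ref{pr:main_bound} to a sup-in-time estimate is unnecessary. The stopping time $T$ is precisely the first hitting time of the threshold $\|w\| = 2V_{\alpha_0}(u[0])\vee 2$, so by continuity of $t \mapsto \|w[t]\|$ (which follows from local well-posedness), on the event $\{T<a\}$ (intersected with the good $X$- and $Y$-events) one has $\|w[T\wedge a]\| \ge 2\lambda_+$ \emph{at the single terminal time $T\wedge a$}. Using the triangle inequality and Lemma~\ref{le:wh_bound} to strip off $\wh$ and $\lL_\lambda\Tilde X$, this forces $\|\wl[T\wedge a]\|^2 - \|\wl[0]\|^2 \ge \lambda_+^2$. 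Proposition~\ref{pr:main_bound} bounds exactly $\EE\bigl(e^{\frac{T\wedge a}{2}}\|\wl[T\wedge a]\|^2 - \|\wl[0]\|^2\bigr) \lesssim a^{1-6\kappa}\lambda_+^2$, so a one-sided argument (Markov on the nonnegative drift part plus Chebyshev on the zero-mean martingale part) yields $\PP(\fA(a)\cap\{T<a\}) \lesssim a^{1-6\kappa}$ with no extra powers of $a$ to track. Your worry that ``$\|w[0]\|^2\sim\lambda^2$ is comparable to the squared threshold'' is precisely why one should measure the \emph{increment} rather than the absolute value; the increment is $\gtrsim\lambda_+^2$ while its expectation is $\lesssim a^{1-6\kappa}\lambda_+^2$, so the $\lambda$-dependence cancels.

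Second, and more importantly, the exponent $\tfrac{1}{8}-\kappa$ does \emph{not} come from balancing the $\wl$ drift, martingale fluctuation and It\^o correction. Those all produce the much better rates $a^{1-6\kappa}$ or $O(a)$. The $\tfrac{1}{8}-\kappa$ is entirely a consequence of the short-time estimate on $Y$: since $\Tilde X$ carries the rough initial data $\ur[0]\in\cC^{-\kappa}$, in the Schauder bootstrap for $\|Y\|_{\cC^{1/4+\kappa}}$ one must control $\int_0^t (t-s)^{-3/4-\kappa/2}\,s^{-1/8-\kappa/2}\,\md s \sim t^{1/8-\kappa}$, coming from $2X\otimes_s e^{s\Delta}\ur[0]$ measured in $\cC^{-5/4}$. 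Combined with the tail bounds on $X$ and $\div(\Wick{X^{\otimes 2}})$ from Proposition~\ref{pr:wick_renormalization}, this is the binding constraint. In your write-up $E_Y$ is waved through as ``an analogous tail bound with a positive power of $a$''; in fact it is the bottleneck, and your claimed optimisation over the martingale/drift balance (which would give something like $a^{1/2}$) cannot reproduce $\tfrac{1}{8}-\kappa$. You would still arrive at a correct proposition by following your outline, but the attribution of the exponent is wrong and the BDG machinery adds complication without benefit.
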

\begin{proof}
Define the event 
\begin{equ}
	\fA(a) := \Bigl\{\sup_{t\leq a}\|Y[t]\|_{L^{\infty}}\leq \frac{1}{2}\alpha_0^2 \Bigr\}\cap \Bigl\{\sup_{t\leq a}\|X[t]\|_{\cC^{-\frac{1}{4}}} \leq \eps_0\alpha_0\Bigr\}\;,
\end{equ}
where $\eps_0 \leq 1$ is a small constant to be chosen later. Then by the definition of $T$, we have 
\begin{equ}
	\fA(a) \cap \{T < a\} \subset \{\|w[T\wedge a]\| \geq 2\lambda_+ \}\;,
\end{equ}
where we use the fact that $\|w[t]\|$ is continuous in $t$ up to the existing time, which follows from the standard fixed point argument. By Lemma~\ref{le:wh_bound}, if $\eps_0 \leq 1$ is small enough, on the event $\fA \cap \{T<a\}$ we have 
\begin{equs}
	\|\wl[T\wedge a]\|^2 - \|\wl[0]\|^2 &\geq \big(\|w[T\wedge a]\| - \|\wh[T\wedge a]\| - \|\lL_{\lambda} \Tilde X\|\big)^2 - \|\wl[0]\|^2 \\ 
	&\geq \big(2\lambda_+ - C\eps_0 \alpha_0 \lambda_+^{2\kappa} \big)^2 - \lambda^2 \geq \lambda_+^2 \;.
\end{equs}
Then using Proposition~\ref{pr:main_bound} and Markov inequality, we get 
\begin{equ}[e:boundT1]
	\PP \left(\fA(a) \cap \{T < a\}\right) \leq C a^{1- 6\kappa}\;.
\end{equ}
It remains to bound $\PP(\fA^c)$. Define event
\begin{equ}
    \fB(a) := \Bigl\{\sup_{t \leq a}\|X[t]\|_{\cC^{-\frac{1}{4}}}\leq \eps_0\alpha_0 \Bigr\} \cap \Bigl\{\sup_{t\leq a}\|\div\big(\Wick{X^{\otimes2}[t]}\big)\|_{\cC^{-\frac{5}{4}}} \leq \eps_0^2 \alpha_0^2 \Bigr\}\;.
\end{equ}
First we show that $\fB(a) \subset \fA(a)$, which means estimating $Y$. Using mild formulation we get that 
\begin{equ}
    Y[t] = \int_0^t e^{(t-s)\Delta}\left(\PP\div\big(2Y[s]\otimes_s \Tilde{X}[s] + \Wick{\Tilde{X}^{\otimes2}[s]}\big)\right)ds\;.
\end{equ}
By Schauder estimate we have
\begin{equs}
    \|Y[t]\|_{\cC^{\frac{1}{4}+\kappa}} 
    &\leq C\int_0^t (t-s)^{-\frac{3}{4}-\frac{\kappa}{2}}\|\div(Y[s]\otimes \Tilde{X}[s])\|_{\cC^{-\frac{5}{4}}}ds \\
    &+ C\int_0^t (t-s)^{-\frac{3}{4}-\frac{\kappa}{2}} \|\div(\Wick{\Tilde{X}^{\otimes 2}[s]})\|_{\cC^{-\frac{5}{4}}}ds\;.
\end{equs}
For the first line, by product estimate and that $\|X[s]\|_{\cC^{-\frac{1}{4}}}\leq 1$ on $\fB(a)$, we get that the integral is smaller than $Ct^{\frac{1}{4}-\frac{\kappa}{2}}\sup_{s\leq t}\|Y[s]\|_{\cC^{\frac{1}{4}+\kappa}}$\,.
For the second line, we first decompose 
\begin{equ}
    \div(\Wick{\Tilde{X}^{\otimes 2}[s]}) = \div(\Wick{X^{\otimes 2}[s]}) + \div\big(2X[s]\otimes_s e^{t\Delta}\ur[0] + (e^{t\Delta}\ur[0])^{\otimes2}\big) \;.
\end{equ}
On $\fB(a)$ we use the triangle inequality, so that
\begin{equ}
    \|\div(\Wick{\Tilde{X}^{\otimes 2}[s]})\|_{\cC^{-\frac{5}{4}}} \leq \eps_0^2\alpha_0^2+ 2\eps_0\alpha_0 \|e^{s\Delta}\ur[0]\|_{\cC^{\frac{1}{4}+\kappa}} + \|e^{s\Delta}\ur[0]\|_{L^{\infty}}^2\;.
\end{equ}
We then use the heat flow estimate \eqref{e:heat_flow2} and the fact that $\|\ur[0]\|_{\cC^{-\kappa}} \leq \alpha_0$ to bound the second line by
\begin{equ}
    C\alpha_0^2 \int_0^t (t-s)^{-\frac{3}{4}-\frac{\kappa}{2}}(1+s^{-\frac{1}{8}-\frac{\kappa}{2}} + s^{-\kappa})ds \leq C\alpha_0^2 t^{\frac{1}{8}-\kappa}\;.
\end{equ}
Taking $a$ small enough, we get that 
\begin{equ}
    \sup_{t\leq a}\|Y[t]\|_{\cC^{\frac{1}{4}+\kappa}} \leq \frac{1}{2} \sup_{t\leq a}\|Y[t]\|_{\cC^{\frac{1}{4}+\kappa}} + \frac{\alpha_0^2}{100}\;,
\end{equ}
which implies that $\fB(a) \subset \fA(a)$. Then by Proposition~\ref{pr:wick_renormalization}, we have 
\begin{equ}
	\PP(\fA(a)^c) \leq \PP(\fB(a)^c) \lesssim a^{\frac{1}{8} - \kappa}\;.
\end{equ}
Combining this with \eqref{e:boundT1}, the result is proved.
\end{proof}
As a result, there exists some $t_0 \leq 1$ independent of the size of initial data such that $\PP(T < t_0) < \frac{1}{10}$\,. We will fix such $t_0$ in the following. 
Now we prove a stopping time version of the main bound.
\begin{prop}
\label{pr:decay_1}
	If $\alpha_0$ is sufficiently small, then there exist constants $0 < \rho < 1$ and $C > 0$ such that for any $u[0]$ with $V_{\alpha_0}(u[0]) < \infty$, we have 
	\begin{equs}
    	\label{e:decay_1}
		\EE V_{\alpha_0}\big(u[T\wedge t_0]\big) &\leq \rho V_{2\alpha_0}(u[0]) + C\;, \\
	\label{e:short_time}
		\sup_{a \leq t_0} V_{2\alpha_0}\big(u[T\wedge a]\big) &\leq 2V_{\alpha_0}(u[0]) + C\;.
	\end{equs} 
\end{prop}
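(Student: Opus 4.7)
The plan is to combine the energy bound of Proposition~\ref{pr:main_bound} with the tail bound on the stopping time from Proposition~\ref{pr:stopping_time_lower_bound}, using a decomposition of $u$ at time $T\wedge a$ inherited from our ansatz. Fix a decomposition $u[0] = \us[0] + \ur[0]$ realising (up to arbitrarily small error) $V_{2\alpha_0}(u[0])$, with $\|\ur[0]\|_{\cC^{-\kappa}} \le 2\alpha_0$, and set $\lambda := \|\us[0]\|$. Our ansatz gives
\[
 u[s] = \wl[s] + \hH_\lambda \Tilde X[s] + Y[s] + \wh[s],\qquad s := T\wedge a,
\]
and I would split this into a smooth part (essentially $\wl$) and a rough part (essentially $\hH_\lambda X$), with the small contributions from $\wh$, $Y$ and the deterministic part $\hH_\lambda e^{s\Delta}\ur[0]$ going into either piece as needed.

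\textbf{Pathwise control of the rough part.} Before $T$, the stopping-time definition gives $\|X\|_{\cC^{-\kappa}} \le \alpha_0$ and $\|Y\|_{L^\infty} \le \alpha_0^2$. Applying Lemma~\ref{le:wh_bound} with $\gamma$ slightly positive and $\alpha$ close to $1$, together with Besov embeddings and $\|\hH_K \Tilde X\|_{\Besov{-\kappa}{4,\infty}} \lesssim \alpha_0$, one shows $\|\wh\|_{L^3}$ is controlled by $\alpha_0$ times a small power of $\lambda_+$ pathwise. Placing $\hH_\lambda X$ in the rough part and the remainder (including $\hH_\lambda e^{s\Delta}\ur[0]$, which is in $L^2$ by heat smoothing with norm $\lesssim s^{-\kappa/2}\alpha_0$) in the smooth part, the rough part has $\cC^{-\kappa}$ norm $\le \alpha_0$ as required, and the extra $L^2$ contributions to the smooth part are of order $\alpha_0$ times a factor depending on $t_0$.

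\textbf{Smooth part and the decay \eqref{e:decay_1}.} Applying Jensen's inequality to Proposition~\ref{pr:main_bound} with $a = t_0$,
\[
 \bigl(\EE e^{(T\wedge t_0)/4}\|\wl[T\wedge t_0]\|\bigr)^2 \le \EE e^{(T\wedge t_0)/2}\|\wl[T\wedge t_0]\|^2 \le \|\wl[0]\|^2 + C\alpha_0 t_0^{1-6\kappa}\lambda^2 + C t_0,
\]
and using $\|\wl[0]\| \le \lambda + C\alpha_0\lambda_+^\kappa$ bounds this by $\lambda^2(1 + C\alpha_0) + C$. Splitting according to $\{T \ge t_0\}$ versus $\{T < t_0\}$ -- using $e^{(T\wedge t_0)/4} \ge e^{t_0/4}$ on the first event and the pathwise bound of Corollary~\ref{cor:wl_bound} combined with $\PP(T < t_0) \le 1/10$ on the second -- yields
\[
 \EE\|\wl[T\wedge t_0]\| \le \bigl(e^{-t_0/4}\sqrt{1 + C\alpha_0} + C\,\PP(T<t_0)^{1/2}\bigr)\lambda_+ + C.
\]
Choosing $\alpha_0$ sufficiently small, and $t_0$ to balance the two contributions, makes the prefactor equal to some $\rho_0 < 1$. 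Combining with the rough-part contribution and the inequality $\lambda \le V_{2\alpha_0}(u[0])$ yields \eqref{e:decay_1}.

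\textbf{Pathwise bound \eqref{e:short_time} and main obstacle.} The bound \eqref{e:short_time} is deterministic: before $T$, $\|w[T\wedge a]\| \le 2 V_{\alpha_0}(u[0]) \vee 2$ by definition of $T$, so the triangle inequality with the rough-part estimates yields $\|\wl[T\wedge a]\| \le 2 V_{\alpha_0}(u[0]) + C$, and adding the rough contribution completes the argument. The principal difficulty is securing $\rho < 1$ in \eqref{e:decay_1}: the tail bound $\PP(T<t_0) \lesssim t_0^{1/8-\kappa}$ forces $t_0$ small, which makes $e^{-t_0/4}$ only slightly below one, so the contraction margin is tight and relies on the $C\alpha_0$ factor in Proposition~\ref{pr:main_bound} and a careful choice of exponent (possibly H\"older rather than Cauchy--Schwarz) to avoid spurious multiplicative losses. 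A secondary subtlety is ensuring that the contribution of $\wh$ and of $\hH_\lambda e^{s\Delta}\ur[0]$ to the smooth part is actually small pathwise, which requires picking the Besov indices in Lemma~\ref{le:wh_bound} so as to gain a negative power of $\lambda_+$ while keeping the norm on $\hH_K \Tilde X$ controlled by $\alpha_0$.
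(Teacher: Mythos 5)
Your overall strategy — decomposing $u[T\wedge t_0]$ via the ansatz, placing $\hH_\lambda X$ in the rough part, bounding the remainder in $L^2$, and invoking Propositions~\ref{pr:main_bound} and~\ref{pr:stopping_time_lower_bound} — matches the paper, and your sketch of \eqref{e:short_time} is essentially right (the paper chooses the decomposition realising $V_{\alpha_0}(u[0])$, bounds $X+e^{t\Delta}\ur$ in $\cC^{-\kappa}$ by $2\alpha_0$ and $Y+w$ in $L^2$ by $\alpha_0^2+2\lambda_+$). However, the argument you propose for \eqref{e:decay_1} has a genuine gap, precisely at the step you flag as the main obstacle. The Jensen plus event-split bound
\[
 \EE\|\wl[T\wedge t_0]\| \le e^{-t_0/4}\sqrt{\EE\,e^{(T\wedge t_0)/2}\|\wl\|^2} + \EE\bigl(\|\wl\|\one_{T<t_0}\bigr)
\]
cannot produce a coefficient $<1$ in front of $\lambda_+$. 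On $\{T<t_0\}$ the only available estimate is the pathwise $\|\wl\|\le C\lambda_+$ of Corollary~\ref{cor:wl_bound}, and the constant $C$ cannot be pushed near one: by the very definition of $T$, when the stopping is triggered by $\|w\|>2V_{\alpha_0}\vee 2$ one has $\|\wl\|\approx 2\lambda_+$. So the second term is of size $C\lambda_+\PP(T<t_0)$. With $\PP(T<t_0)\lesssim t_0^{1/8-\kappa}$ this dominates the gain $\lambda_+(1-e^{-t_0/4})\sim \lambda_+ t_0/4$ as $t_0\to 0$, while for $t_0$ of order one the total coefficient $e^{-t_0/4}+C\PP(T<t_0)$ exceeds one unless the implicit constant in Proposition~\ref{pr:stopping_time_lower_bound} happens to be tiny, which you cannot assume. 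No choice of $t_0\le 1$ closes the gap.

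The paper sidesteps this by extracting the exponential factor multiplicatively with Cauchy--Schwarz in the form
\[
 \bigl(\EE\|\wl[T\wedge t_0]\|\bigr)^2 \le \EE\bigl(e^{(T\wedge t_0)/2}\|\wl[T\wedge t_0]\|^2\bigr)\cdot\EE\,e^{-(T\wedge t_0)/2}\,.
\]
The first factor is $(1+C\alpha_0 t_0^{1-6\kappa})\lambda^2+Ct_0$ by Proposition~\ref{pr:main_bound}, and the second factor is strictly below one uniformly in $\lambda$ and $\alpha_0$: since $\PP(T\ge t_0)\ge 9/10$ one has $\EE e^{-(T\wedge t_0)/2}\le \tfrac{1}{10}+\tfrac{9}{10}e^{-t_0/2}$. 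The bad event $\{T<t_0\}$ thus contributes to a multiplicative factor strictly less than one rather than to an additive $C\lambda_+\PP(T<t_0)$ term, which is exactly what makes $\rho<1$ attainable for small $\alpha_0$. Your instinct to bring in a Cauchy--Schwarz or H\"older step is correct, but it must be applied in this specific pairing, not as a refinement of the event split. One further point worth stating explicitly: the contribution $\alpha_0(T\wedge t_0)^{-\kappa/2}$ from $\hH_\lambda e^{s\Delta}\ur[0]$ is not bounded pathwise; it is its \emph{expectation} that is finite, and this again uses the tail bound of Proposition~\ref{pr:stopping_time_lower_bound}.
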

\begin{proof}
Take a decomposition of $\us[0] = \us + \ur$ with $\|\us\| = \lambda$ and $\|\ur\|_{\cC^{-\kappa}} \leq 2\alpha_0$. For \eqref{e:decay_1}, recall our ansatz $u = \hH_{\lambda} \big(X + e^{t\Delta}\ur\big) + Y + \wh + \wl$.
	By the triangle inequality, the heat flow estimate, and Lemma~\ref{le:wh_bound}, we get 
	\begin{equs}
		\|u - \hH_{\lambda} X\|
		&\leq  \|\hH_{\lambda} e^{t\Delta} \ur\| + \|Y\| + \|\wh\| + \|\wl\|\\
		\label{e:Lyapunov_bound}
		&\leq  \alpha_0 (T\wedge a)^{-\kappa/2} + C + C \lambda_+^{\kappa} + \|\wl\|\;.
	\end{equs}
    By the Cauchy--Schwarz inequality and Proposition~\ref{pr:main_bound}, we get
    \begin{equs}
    	\left(\EE \|\wl[T\wedge t_0]\| \right)^2 &\leq \EE \left( e^{\frac{T\wedge t_0}{2}}\|\wl[T\wedge t_0]\|^2 \right)\EE e^{-\frac{T\wedge t_0}{2}}\;\\
	& \leq \left(\big(1+C \alpha_0 \cdot t_0^{1-6\kappa}\big) \lambda^2 + C t_0\right)\EE e^{-\frac{T\wedge t_0}{2}}\;.
    \end{equs}
    By Proposition~\ref{pr:stopping_time_lower_bound} we find that $\EE e^{-\frac{T\wedge t_0}{2}} < 1$. Since 
    this bound is uniform over $\alpha_0 < 1$ and over $\lambda$, we can take $\alpha_0$ small enough so that 
    \begin{equ}
    	\left(\EE \|\wl[T\wedge t_0]\| \right)^2 \leq \rho \lambda^2 + C\;,
    \end{equ}
 for some $0 < \rho < 1$ and $C > 0$. Plugging this and Proposition~\ref{pr:stopping_time_lower_bound} into \eqref{e:Lyapunov_bound}, we get 
    \begin{equ}
    	\EE \|u[T\wedge t_0] - \hH_{\lambda} X[T\wedge t_0]\| \leq \rho \lambda^2 + C\;.
    \end{equ}
By the definition of $\hH_{\lambda}$ and $T$, we have $\|\hH_{\lambda} X[T\wedge t_0] \|_{\cC^{-\kappa}} \leq \alpha_0$, so that \eqref{e:decay_1} is proved.
    Regarding \eqref{e:short_time}, we decompose $u = X + e^{t\Delta} \ur + Y + w$ with $\|\ur\|_{\cC^{-\kappa}} \leq \alpha_0$. Then by definition of $T$, for any $a > 0$ we have
    \begin{equs}
    	\|X[T\wedge a] + e^{(T\wedge a)\Delta}\ur\|_{\cC^{-\kappa}} &\leq \|X\|_{\cC^{-\kappa}} + \|e^{t\Delta}\ur\|_{\cC^{-\kappa}} \leq 2\alpha_0\;, \\
	\|Y[T\wedge a] + w[T\wedge a] \| &\leq \alpha_0^2 + 2\lambda_+ \;,
    \end{equs}
and the bound~\eqref{e:short_time} follows.
\end{proof}

\subsection{Strong Markov property and iteration} 
\label{sec:Iteration}
In this subsection we aim to transfer the bound in Proposition~\ref{pr:decay_1} from the random time $T\wedge t_0$ to any deterministic time $t$, with some standard argument. We formalise it as the following abstract result.  
\begin{prop}
\label{pr:exponential_decay_general}
	Let $u\in C([0, \infty); \xX)$ be a strong Markov process with state space $\xX$ and natural filtration $\{\fF_t\}_{t\in [0, \infty)}$\,. Let $T: C([0, \infty); \xX) \rightarrow [0, \infty)$ be a stopping time satisfying the following properties. 
	\begin{enumerate}
	\item There exists some $t_0$ independent of $u[0]$ such that $\PP(T < t_0) \leq \frac{1}{10}\, ;$
	\item There exist some Lyapunov functions $V, \Tilde V: \xX \rightarrow [0, \infty)$ such that $\Tilde V \leq V$ and
	\begin{equs}
	\label{e:one_step_as}
		\EE V(u[T\wedge t_0]) &\leq \rho \Tilde V(u[0]) + C_1 \; , \\
	\label{e:short_time_as}
		\sup_{a \leq t_0} \EE \Tilde{V}(u[T\wedge a]) &\leq C_2 V(u[0]) + C_3\;,
	\end{equs}
	with constants $0 < \rho < 1$ and $C_1, C_2, C_3>0$.
	\end{enumerate}
	Then there exist constants $\gamma = \gamma(t_0, \rho, C_2), C= C(t_0, \rho) > 0$ such that for any $t \geq t_0$\,, we have
	\begin{equ}
	\label{e:exponential_decay_general}
		\EE \Tilde{V}(u[t]) \leq C C_2 e^{-\gamma t} \Tilde V(u[0]) + C \big(\log C_2 + 1\big) C_1 C_2 + 2C_3\;.
	\end{equ}
\end{prop}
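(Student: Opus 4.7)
The plan is to iterate the one-step contraction~\eqref{e:one_step_as} at random stopping times via the strong Markov property, and then use the short-time bound~\eqref{e:short_time_as} once to bridge from a random stopping time back to the deterministic time $t$.

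First, I would define stopping times $\tau_0 = 0$ and $\tau_{n+1} = \tau_n + \sigma^{(n)}$, where $\sigma^{(n)}$ is the analogue of $T \wedge t_0$ for the process $u[\tau_n + \cdot\,]$ obtained by strong Markov. Since each increment satisfies $\sigma^{(n)} \leq t_0$ (so $\tau_n \leq n t_0$) and equals $t_0$ with conditional probability at least $9/10$, iterating~\eqref{e:one_step_as} together with $\Tilde V \leq V$ gives
\begin{equation*}
    \EE V(u[\tau_n]) \leq \rho^n \Tilde V(u[0]) + \frac{C_1}{1-\rho}.
\end{equation*}

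Next, for deterministic $t \geq t_0$, set $N_t = \max\{n : \tau_n \leq t\}$ (which is a.s.\ at least $\lfloor t/t_0 \rfloor$). On $\{N_t = n\}$ the residual time $a := t - \tau_n$ lies in $[0, t_0)$ and the shifted stopping time $T^{(n)}$ satisfies $T^{(n)} > a$, so $u[t] = u[\tau_n + T^{(n)} \wedge a]$. Applying the strong Markov property at $\tau_n$ together with~\eqref{e:short_time_as} to the shifted process (with $a = t - \tau_n$ determined by $\fF_{\tau_n}$) and summing over the disjoint events $\{N_t = n\}$ yields a bound of the form
\begin{equation*}
    \EE \Tilde V(u[t]) \leq C_2\, \EE V(u[\tau_{N_t}]) + C_3.
\end{equation*}
I would then bound $\EE V(u[\tau_{N_t}])$ by working with the truncated expectations $\psi_n := \EE[V(u[\tau_n]) \one_{N_t \geq n}]$, which by strong Markov and~\eqref{e:one_step_as} satisfy the contractive recursion $\psi_{n+1} \leq \rho\, \psi_n + C_1\, \PP(N_t \geq n+1)$. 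A Chernoff-type tail bound showing that $\PP(N_t \geq n)$ decays exponentially once $n \gg t/t_0$ (using that the $\sigma^{(n)}$ are $[0,t_0]$-valued with conditional mean at least $9t_0/10$) then gives exponential decay of $\psi_n$ and hence of $\EE V(u[\tau_{N_t}])$ in $t$. Choosing the iteration depth $n^{\star} \sim \log C_2 / \log(1/\rho)$ to balance the short-time factor $C_2$ against the geometric decay $\rho^{n^{\star}}$ produces~\eqref{e:exponential_decay_general} with $\gamma \sim t_0^{-1} \log(1/\rho)$; the $\log C_2$ in the constant term reflects this balancing depth.

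The main obstacle will be the last step: passing from the clean exponential decay of $\EE V(u[\tau_n])$ at \emph{deterministic} indices $n$ to a bound at the \emph{random} index $N_t$. This requires either concentration of the partial sums $\tau_n$ or an optional stopping argument for a suitable supermartingale (such as $\rho^{-n}(V(u[\tau_n]) - C_1/(1-\rho))$ with an appropriate truncation). Some care is also needed in the summation step of Step~2 to avoid overcounting realizations with multiple $\tau_n$'s in the window $(t-t_0,t]$; the expected number of such indices is controlled by virtue of $\PP(T \geq t_0) \geq 9/10$.
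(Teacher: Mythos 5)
Your overall strategy — iterate the one-step contraction at successive applications of $T\wedge t_0$, then bridge from a random stopping time near $t$ back to the deterministic time $t$ using \eqref{e:short_time_as}, controlling the random number of iterations via the $\PP(T\geq t_0)\geq 9/10$ hypothesis — matches the paper's proof. However, the crux of the argument is precisely the step you flag as "the main obstacle," and your sketch of it has a genuine gap that the paper resolves by a mechanism you do not describe.

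The clean intermediate bound $\EE\tilde V(u[t]) \le C_2\,\EE V(u[\tau_{N_t}]) + C_3$ does not follow from summing \eqref{e:short_time_as} over $\{N_t = n\}$: those events are \emph{not} $\fF_{\tau_n}$-measurable (they depend on $\tau_{n+1}$), so to condition at $\tau_n$ you must enlarge to $\{\tau_n \le t\}$, and the resulting sum overcounts by a factor of the expected number of indices landing in the window $(t-t_0, t]$. You note that this count is $O(1)$ in expectation, which is true, but the bound you then obtain is of the form $C_2\,\EE\bigl(\sum_{n: \tau_n\in(t-t_0, t]} V(u[\tau_n])\bigr) + C_3\,\EE(\text{count})$, and it is not clear how your $\psi_n$-recursion plus a Chernoff tail on $N_t$ turns $\sum_n\EE\bigl(V(u[\tau_n])\one_{t-t_0<\tau_n\le t}\bigr)$ into $e^{-\gamma t}\tilde V(u[0]) + C$ — in fact a naive application gives $\sum_n\psi_n\lesssim \tilde V(u[0]) + C_1\EE N_t$, which is not exponentially small in $t$ at all, because the geometric decay in $n$ and the exponential growth in the number of terms exactly cancel. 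The paper's fix is to run a \emph{second iteration backward}: restarting a new sequence $\tilde\tau^{(0)}_t=\tau_t, \tilde\tau^{(1)}_t, \dots$ capped at $t$, applying \eqref{e:short_time_as} at level $\tilde\tau^{(i-1)}_t$ to reach $t$, and then iterating the one-step contraction \eqref{e:one_step_as} backward $i-1$ times to get back to $\tilde\tau^{(0)}_t=\tau_t$, gaining a $\rho^{i-1}$ factor, which beats the $\one_{\tilde\tau^{(i-1)}_t<t}$ indicator's probability $\le 10^{-(i-1)}$.

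Moreover, even with this fix, the paper's Step~2 bound for $\EE V(u[\tau_t])$ carries a constant term $\sim C_1\,\EE N_t \sim C_1\,t/t_0$, i.e.\ \emph{linear in $t$}. This has to be killed by a separate argument (the paper's Step~4): one takes the bound $\EE\tilde V(u[t])\le \frac{C_2}{1-\rho}e^{-\gamma t}\tilde V(u[0]) + C\,t\,C_1C_2 + 2C_3$ and iterates it over deterministic time intervals of length $T_* = \gamma^{-1}\log\frac{C_2}{1-\rho} + \gamma^{-1}$; the $\log C_2 + 1$ in the final constant comes from this choice of $T_*$, not from "choosing an iteration depth $n^*$" within the Markov chain as you suggest. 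Your heuristic for the $\log C_2$ factor is pointing in the right direction, but the mechanism is different: the balancing happens at the level of the coarse deterministic time step, not the internal stopping-time index.

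In short: the architecture you propose is essentially the paper's, but you omit the backward iteration that controls the overcounting, and you omit the final re-iteration over coarse time intervals that eliminates the linear-in-$t$ factor and produces the $\log C_2$. Both are essential; without them your two claimed intermediate bounds do not hold.
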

\begin{rmk}
	From previous subsection we know that $(V, \Tilde{V}) = (V_{\alpha_0}, V_{2\alpha_0})$ satisfies the assumptions.
\end{rmk}
\begin{rmk}
A similar stopping time argument is used in \cite[Section 3.2]{KT22}, but they exploit the ``coming down from infinity'' property of $\Phi^4_2$ which is not possessed by Navier--Stokes 
equation. A further assumption \eqref{e:short_time_as} is proposed to handle this. However, to 
find a good $\Tilde{V}$ satisfying \eqref{e:short_time_as}, we actually rely on the fact that the Navier--Stokes equations do not require any counterterms at the level of the 
regularised equations. 
\end{rmk}
\begin{proof}
We fix the time $t > t_0$ throughout the proof. We only focus on the proof of \eqref{e:exponential_decay_general}, as the other one follows from exactly the same procedure.

\step{Step 1:} 
Define a series of stopping times as follows. Let $\taut{0} = 0$\,. When $\taut{i-1} \leq t - t_0$\,, we define 
\begin{equ}
	\label{e:tauone_i_defn}
	T^{(i)}:= T\big(u[\taut{i-1}+ \cdot]\big) \wedge t_0\;, \qquad \taut{i} := \taut{i-1} + T^{(i)}\;,
\end{equ}
otherwise we set $\taut{i} := \taut{i-1}$. It is straightforward to show that the $\taut{i}$ are indeed 
stopping times. Since $\{\taut{i}\}$ forms an increasing bounded sequence, the stopping time $\tau_t := \lim_{i\rightarrow \infty} \taut{i}$ exists. 
Define $N_t:= \inf\{i\in \N: \taut{i} = \taut{i+1} = \tau_t \}$\,. We claim that for any $t > 0$, we have
    \begin{equ}
        \label{e:tail_bound}
        \EE \exp( N_t) \leq e^{2\lceil \frac{t}{t_0} \rceil} \leq e^{\frac{2t}{t_0} + 2}\;. 
    \end{equ}
    For the proof, first note that random variables $N_t$ is increasing in $t$, so it suffices to analyze $N_{k t_0}$. For $k=2$, note that $\{N_{2t_0} > i\} = \{\taut{i} \leq t_0\}$. Then by strong Markov property we have
    \begin{equs}
        \PP(\taut{i}\leq t_0) &= \EE(\boldsymbol{1}_{\taut{i}\leq t_0}\cdot\boldsymbol{1}_{\taut{i-1}\leq t_0})\\ &=\EE\left(\EE\big(\boldsymbol{1}_{\taut{i}\leq t_0}\vert \fF_{\taut{i-1}}\big)\boldsymbol{1}_{\taut{i-1}\leq t_0}\right) \\
        \label{e:exponential_tail}
        &\leq \frac{1}{10}\PP(\taut{i-1}\leq t_0) \leq \frac{1}{10^i}\;.
    \end{equs}
    Thus, we have the tail bound $\PP(N_{2t_0}>i)\leq \frac{1}{10^i}$, which means that for any $t\leq 2t_0$\,, 
    \begin{equ}
         \EE \exp\bigl(N_{t}\bigr) \leq \EE \exp\left( N_{2t_0}\right) \leq e^2\;.
    \end{equ}
    Suppose the result holds for $t = (k-1)t_0$. Then, by the definition of $N_t$ and the strong Markov property, 
    we get that
    \begin{equ}
        \EE \left(\exp\big( N_{kt_0}-N_{(k-1)t_0} \big) \Big\vert \fF_{\tau_{(k-1)t_0}}\right) \leq \EE \left(\exp\big(N_{2t_0} \big) \right) \leq e^2\;,
    \end{equ}
    so that \eqref{e:tail_bound} follows from the induction hypothesis.

\step{Step 2:} 
Set $W(u[0]) = \EE V(u[T\wedge t_0])$. 
By the strong Markov property, \eqref{e:one_step_as}, and the fact that $\Tilde V \le V$, one then has on the event $\taut{i-1} \neq \tau_t $, the bound
\begin{equs}
    \EE\left(V\big(u[\taut{i}]\big) \Big\vert \fF_{\taut{i-1}}\right) &= W(u[\taut{i-1}])
    \leq \rho \Tilde V\big(u[\taut{i-1}]\big) + C_1 \\
    &\leq \rho V\big(u[\taut{i-1}]\big) + C_1\;.\label{e:one_step_3}
\end{equs}
Since we have the trivial bound $\taut{i} \leq i t_0$, \eqref{e:one_step_3} holds for any $i\leq \frac{t-t_0}{t_0}$. This means that for $i_t = \lfloor \frac{t-t_0}{t_0}\rfloor$, there exists some constant $0 < \gamma < 1$ depending on $\rho$ and $t_0$ such that
    \begin{equ}
    \label{e:exp_decay_1}
        \EE V\big(u[\taut{i_t}]\big)  \leq e^{-\gamma t} \Tilde V\big(u[0]\big) + \frac{C_1}{1-\rho}\;.
    \end{equ}
    As for $i>i_t$\,, note that the event $\{\taut{i-1}\neq \tau_t \}$ is $\fF_{\taut{i-1}}$-measurable. 
    Therefore, by the strong Markov property and \eqref{e:one_step_3}, we have
    \begin{equ}
    	\EE \left(V\big(u[\taut{i}]\big) ;\taut{i-1}\neq \tau_t \right) \leq \EE \left(\rho V\big(u[\taut{i-1}]\big)  + C_1;\taut{i-1}\neq \tau_t \right)\;.
    \end{equ}
    This implies that
    \begin{equs}
        \EE V\big(u[\taut{i}]\big)  &= \EE \left(V\big(u[\taut{i}]\big) ;\taut{i-1}\neq \tau_t \right) + \EE \left(V\big(u[\taut{i-1}]\big) ;\taut{i-1} = \tau_t \right)\\
        &\leq \rho \EE \left( V\big(u[\taut{i-1}]\big);  \taut{i-1}\neq \tau_t \right)  + C_1 \EE \left( \taut{i-1}\neq \tau_t \right) \\ & \qquad \qquad \qquad \qquad \qquad \qquad + \EE \left(V\big(u[\taut{i-1}]\big) ;\taut{i-1} = \tau_t \right) \\
        &\leq \EE V\big(u[\taut{i-1}]\big)  + C_1\PP\left(\taut{i-1}\neq \tau_t\right)\;.
    \end{equs}
    We can iterate this bound so that, for any $i\geq i_t$, we have
    \begin{equs}
        \EE V\big(u[\taut{i}]\big) &\leq \EE V\big(u[\taut{i_t}]\big) + C_1\sum_{i \geq i_t} \PP(\taut{i}\neq \tau_t) \\
        &= \EE V\big(u[\taut{i_t}]\big) + C_1\sum_{i\geq i_t} \PP(N_t > i)\\
        &\leq \EE V\big(u[\taut{i_t}]\big) + C_1 \EE N_t\;.
    \end{equs}
    From \eqref{e:tail_bound} and Jensen's inequality, we get $\EE N_t \leq \frac{2t}{t_0} + 2$.
    Combining this with \eqref{e:exp_decay_1} we get
    \begin{equ}
    \label{e:exp_decay}
        \EE V\big(u[\tau_t]\big) \leq e^{-\gamma t} \Tilde V\big(u[0]\big)  + \Big(\frac{2t}{t_0} + 2 + \frac{1}{1-\rho}\Big)C_1\;.
    \end{equ}
    
\step{Step 3:} We restart the process at $t - t_0 < \tau_t \leq t$. Define stopping times $\Tilde{\tau}^{(i)}_{t}$ as follows. We set $\Tilde{\tau}^{(0)}_{t} = \tau_t$ and, for $i>0$, we define
\begin{equ}
	\Tilde{\tau}^{(i)}_{t} := \big(\Tilde{\tau}^{(i-1)}_{t} + T\big(u[\Tilde{\tau}^{(i-1)}_{t} + \cdot]\big)\big)\wedge t\;.
\end{equ}
Let $\Tilde{N}_t := \inf \{i\in \N: \taut{i} \geq t\}$. The same argument as in step~2 yields
\begin{equ}
\label{e:tau_tilde_tail}
	\PP\left(\Tilde{\tau}^{(i)}_t < t \right) \leq \frac{1}{10^i}\;.
\end{equ}
Note that by \eqref{e:short_time_as} and strong Markov property we have
\begin{equs}
	\sum_{i\geq 1} \EE\left( \Tilde{V} \big(u[t]\big)\;; \Tilde{N}_t = i\right) & \leq \sum_{i\geq 1} \EE\left( \Tilde{V} \big(u[t]\big)\;; \Tilde{\tau}^{(i-1)}_t < t \right)\\
	\label{e:short_time1}
	& \leq \sum_{i\geq 1} \EE \left(C_2 V\big(u[\Tilde{\tau}^{(i-1)}_{t}]\big) + C_3 \;;\Tilde{\tau}^{(i-1)}_t < t \right)\;.
\end{equs}
By the strong Markov property, the assumption \eqref{e:one_step_as}, and the fact that $V \geq 0$, we get 
\begin{equs}
	\EE \left( V\big(u[\Tilde{\tau}^{(i-1)}_{t}]\big) \;; \Tilde{\tau}^{(i-1)}_t < t\right) &= \EE \left(\EE \Big( V\big( u[\Tilde{\tau}^{(i-1)}_t]\big)\boldsymbol{1}_{\Tilde{\tau}^{(i-1)}_t < t} \Big \vert \fF_{\Tilde{\tau}^{(i-2)}_t}\Big) \right) \\
	&\leq \EE \left(\EE \Big( V\big( u[\Tilde{\tau}^{(i-1)}_t]\big)\boldsymbol{1}_{\Tilde{\tau}^{(i-2)}_t < t} \Big \vert \fF_{\Tilde{\tau}^{(i-2)}_t}\Big) \right) \\
	&\leq \EE \left(\rho V\big(u[\Tilde{\tau}^{(i-2)}_t]\big)  + C_1;\Tilde{\tau}^{(i-2)}_t < t \right)\; .
\end{equs}
Therefore, iterating this, and using \eqref{e:tau_tilde_tail} we get that 
\begin{equs}
	\EE \left( V\big(u[\Tilde{\tau}^{(i-1)}_{t}]\big) \;; \Tilde{\tau}^{(i-1)}_t < t\right) &\leq \rho^{i-1} \EE V\big(u[\Tilde{\tau}^{(0)}_t]\big) + \sum_{\ell = 1}^{i-1} C_1\rho^{\ell-1}\PP\left(\Tilde{\tau}^{(i-1-\ell)}_t < t\right) \\
	&\leq \rho^{i-1}\EE V\big(u[\Tilde{\tau}^{(0)}_t]\big) + C_1 \left(\frac{\rho^{(i-1)/2}}{1-\rho} + \frac{(\frac{1}{10})^{(i-1)/2}}{\frac{9}{10}}\right)\;.
\end{equs}
Plugging this into \eqref{e:short_time1}, it follows that
\begin{equs}
	\sum_{i\geq 1} &\,\EE\left( \Tilde{V} \big(u[t]\big)\;; \Tilde{N}_t = i\right) \\
	&\leq  C_2
	\sum_{i\geq 1}\left(\rho^{i-1}\EE V\big(u[\Tilde{\tau}^{(0)}_t]\big) + C_1 \left(\frac{\rho^{(i-1)/2}}{1-\rho} + \frac{(\frac{1}{10})^{(i-1)/2}}{\frac{9}{10}}\right) \right) + C_3 \sum_{i \geq 1}\frac{1}{10^{i-1}}\\
	&\leq  \frac{C_2}{1-\rho}\EE V\big(u[\tau_t]\big) + \frac{10C_1C_2}{(1-\rho)(1-\sqrt{\rho})} + 2 C_3\;. 
\end{equs}
Combining this with \eqref{e:exp_decay}, we get that for any $t > 0$, we have
\begin{equs}
	\EE \Tilde V(u[t]) &\leq \frac{C_2}{1-\rho} \left(e^{-\gamma t} \Tilde V\big(u[0]\big)  + \Big(\frac{2t}{t_0} + 2 + \frac{1}{1-\rho}\Big)C_1\right) + \frac{10C_1C_2}{(1-\rho)(1-\sqrt{\rho})} + 2 C_3 \\
	&\leq \frac{C_2}{1-\rho} e^{-\gamma t} \Tilde V\big(u[0]\big) + C(t_0, \rho) \cdot t C_1C_2 + 2 C_3\;.
\label{e:exp_decay_general_0}
\end{equs} 

\step{Step 4:}
Now take $T_* = \frac{1}{\gamma} \log \frac{C_2}{1-\rho} + \frac{1}{\gamma}$. Since $\gamma$ only depends on $t_0, \rho$, for any $t \in [T_*, 2T_*]$ we get
\begin{equ}
	\EE \Tilde V(u[t]) \leq e^{-1} \Tilde V(u[0]) + C(t_0, \rho) \big(\log C_2 + 1\big) C_1 C_2 + 2C_3\;. 
\end{equ} 
Iterating this, we get that for any $t \geq T_*$, we have
\begin{equ}
	\EE \Tilde V(u[t]) \leq e^{- \lfloor \frac{t}{T_*} \rfloor} \Tilde V(u[0]) + C(t_0, \rho) \big(\log C_2 + 1\big) C_1 C_2 + 2C_3\;.
\end{equ}
The result follows by combining this with \eqref{e:exp_decay_general_0}.
\end{proof}
\begin{rmk}
	In Step 3 we rely crucially on the fact that $\rho < 1$. When $\rho \geq 1$, a similar result can be obtained (with a negative $\gamma$ in the statement), provided that the assumption \eqref{e:short_time_as} is strengthened to a pathwise inequality 
	\begin{equ}
		\sup_{a \leq t_0} \Tilde{V}(u[T\wedge a]) \leq C_2 V(u[0]) + C_3\;.
	\end{equ}
	This allows us to prove the exponential growth in time bound mentioned in Remark~\ref{rmk:large_alpha}, since the choice of $(V, \Tilde V) = (V_{\alpha_0}, V_{2\alpha_0})$ does satisfy this stronger assumption.
\end{rmk}

\section{Moment bounds}
\label{sec:tail_bound}
\subsection{Energy estimate}
Now we start to prove moment bounds on the solution. First we prove an energy estimate for the moments by modifying the argument in the previous section. 
\begin{prop}
	For any initial data $u[0] = \ur[0] + \us[0]$ with $\|\us[0]\| = \lambda$ and $\|\ur[0]\|_{\cC^{-\kappa}} \leq 2\alpha_0$, define $\wl$ and stopping time $T$ as previously. Then there exists some constant $C$ which is independent of $N$, such that for any $0 < a < 1$ and $\alpha_0$ small enough (the smallness is independent of $N$), we have 
	\begin{equs}
	\label{e:moment_main_bound}
		\EE \Big(e^{\frac{T\wedge a}{2}} \|\wl[T\wedge a]\|^{2N} &- \|\wl[0]\|^{2N} \Big) \leq (C\alpha_0)^{2N} \lambda_+^{2N}a + (CN)^{N}a \quad \\
		&+ C^N \lambda_+^{2N} \int_0^a \EE \left(\lambda_+^{\kappa} \|\pP_{\lambda, K} \Tilde X\|_{\Besov{-\kappa}{4, \infty}} \right)^{2N/(\frac{1}{2} - \frac{500\kappa}{3})} \md t \\
		&+ C^N \lambda_+^{2N} \int_0^{a} \EE \big( \lambda_+^{3N} \|\hH_{\lambda} \Tilde X\|_{\Besov{-1/2}{4, 6N}}^{6N} \big)\md t \;.
	\end{equs}
\end{prop}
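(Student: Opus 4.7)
The plan is to extend the proof of Proposition~\ref{pr:main_bound} to the $2N$-th moment. I would apply It\^o's formula to $\frac{1}{2N}e^{t/2}\|\wl\|^{2N}$, which produces a drift analogous to \eqref{e:energy_estimate}: the dissipation $-2N\|\wl\|^{2N-2}\|\grad\wl\|^2$, each nonlinear term from \eqref{e:energy_estimate_perturbation}--\eqref{e:energy_estimate_main} multiplied by the extra weight $2N\|\wl\|^{2N-2}$, a martingale driven by $\bracket{\|\wl\|^{2N-2}\wl,\md(\lL_{\lambda}\xi_1+\xi_2)}$ which vanishes in expectation, and an It\^o--Stratonovich correction of size $N\|\wl\|^{2N-2}\tr(Q)+2N(N-1)\|\wl\|^{2N-4}\|\wl\|_Q^2$, where $Q$ is the covariance operator of $\lL_{\lambda}\xi_1+\xi_2$. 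Since $\tr(Q)\lesssim \alpha_0^2\lambda^2+C$ and $\|\wl\|_Q^2\lesssim\|\wl\|^2$, this correction is bounded pointwise by $CN(N+\alpha_0^2\lambda^2)\|\wl\|^{2N-2}$.

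I would then apply Young's inequality $ab^{N-1}\leq \frac{a^N}{N}+\frac{N-1}{N}b^N$ with $a=CN(N+\alpha_0^2\lambda^2)$ and $b=\|\wl\|^2$. The $\|\wl\|^{2N}$ remainder is absorbed into the dissipation via Poincar\'e's inequality (the coefficient $2N$ of the dissipation easily dominates $\frac{N-1}{N}$ times the Poincar\'e constant), while the $a^N/N$ piece splits by convexity into a contribution bounded by $(C\alpha_0)^{2N}\lambda^{2N}$ (from the $\alpha_0^2\lambda^2$ part) and $(CN)^N$ (from the $N$ part), producing the first two terms of \eqref{e:moment_main_bound}.

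For the three nonlinear terms, I would mimic the decomposition used in Proposition~\ref{pr:main_bound}, now with the extra weight $\|\wl\|^{2N-2}$. For the large-frequency term, multiplying \eqref{e:large_freq1} by $\|\wl\|^{2N-2}$ and applying Young's inequality (so that $2N\|\wl\|^{2N-2}\|\grad\wl\|^2$ absorbs the dissipation and Corollary~\ref{cor:wl_bound} reduces the residual $\|\wl\|$-powers to $\lambda_+$) produces $C^N\lambda_+^{2N}\|\wh\|_{L^3}^{6N}$-type contributions, which via the $6N$-th power analogue of \eqref{e:large_freq2}--\eqref{e:large_freq3} yield the fourth term. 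For the middle-frequency term, multiplying \eqref{e:middle_freq1} by $\|\wl\|^{2N-2}$ and applying Young's inequality with $N$-scaled conjugate exponents (so that $\|\wl\|^{2N-2}\|\grad\wl\|^{3/2+500\kappa/3}_+$ is absorbed and the Besov norm of $\pP_{\lambda,K}\Tilde X$ is raised to the power $2N/(\frac{1}{2}-\frac{500\kappa}{3})$) gives the third term. Finally, each sub-bound from the proof of \eqref{e:energy_estimate2_small} is multiplied by $\|\wl\|^{2N-2}$, and Young's inequality combined with Corollary~\ref{cor:wl_bound} converts the result into a contribution of size $(C\alpha_0)^{2N}\lambda_+^{2N}$, since each ingredient already carries a factor of at least $\alpha_0$ and those factors raise consistently to the $2N$-th power.

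The main obstacle I anticipate is the careful tracking of $N$-dependence throughout the Young's inequalities, especially separating the $(C\alpha_0)^{2N}\lambda^{2N}$ contributions from the purely combinatorial $(CN)^N$ contributions in the It\^o--Stratonovich step. A secondary subtlety is in the middle-frequency estimate, where the exponent $2N/(\frac{1}{2}-\frac{500\kappa}{3})$ on the Besov norm must be produced by distributing the $\|\wl\|^{2N-2}$ factor so that both the $N$-scaled dissipation absorption and the correct Besov exponent emerge simultaneously, requiring a more delicate Young's than in the $N=1$ case.
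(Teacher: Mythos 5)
Your structure matches the paper's: apply It\^o's formula to $e^{t/2}\|\wl\|^{2N}$, identify the correction $N\|\wl\|^{2N-2}\tr(Q) + 2N(N-1)\|\wl\|^{2N-4}\langle Q\wl,\wl\rangle$, multiply each of \eqref{e:large_freq1}, \eqref{e:middle_freq1} and the sub-bounds of \eqref{e:energy_estimate2_small} by $\|\wl\|^{2N-2}$ and close with Young's. However, the specific Young's inequality you quote, $ab^{N-1}\leq \frac{a^N}{N}+\frac{N-1}{N}b^N$ with $a=CN(N+\alpha_0^2\lambda^2)$, is not good enough, and the gap is exactly at the point you yourself flag as ``the main obstacle''. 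With this unweighted form,
\begin{equ}
\frac{a^N}{N} = \frac{C^N N^N (N+\alpha_0^2\lambda^2)^N}{N} = C^N N^{N-1}(N+\alpha_0^2\lambda^2)^N\,,
\end{equ}
so the $\alpha_0^2\lambda^2$ part contributes $C^N N^{N-1}\alpha_0^{2N}\lambda^{2N}$, not $(C\alpha_0)^{2N}\lambda^{2N}$. The extra $N^{N-1}$ is fatal downstream: in Proposition~\ref{pr:moment_decay_1} one must absorb this term into $\rho\lambda^{2N}$ with $\rho<1$, which would force $\alpha_0\lesssim N^{-1/2}$, contradicting the claimed $N$-independence of the smallness of $\alpha_0$ (cf.\ Remark~\ref{rmk:moment_difficulty}). (The companion term becomes $(CN)^{2N-1}$ rather than $(CN)^N$, but that is harmless since the final result only needs $(CN)^{2N}$.)

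The fix is to use Young's inequality with a tuned parameter so that the single factor of $N$ in the It\^o correction is cancelled by the factor $N$ available in the dissipation $2N\|\wl\|^{2N-2}\|\grad\wl\|^2$, rather than being raised to the $N$-th power. Concretely, for conjugate exponents $N/(N-1)$ and $N$ and any $\delta>0$,
\begin{equ}
N\,c\,\|\wl\|^{2N-2} \;\leq\; \delta\,(N-1)\|\wl\|^{2N} + C_\delta^{\,N-1}\,c^N\,,
\end{equ}
where $C_\delta$ depends on $\delta$ but not on $N$; the choice $\epsilon^N = \delta N$ in the standard $\epsilon$-weighted Young's inequality makes the $N^{N}/N$ factor disappear. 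This is what the paper does, treating $c = N-1$ and $c = \alpha_0^2\lambda^2 + C$ separately so that the first gives $(CN)^N$ and the second gives $(C\alpha_0)^{2N}\lambda^{2N} + C^N$ exactly as stated, without contaminating the $\alpha_0^{2N}\lambda^{2N}$ term with any $N^N$. Apart from this point, your plan reproduces the paper's argument for the remaining three nonlinear/remainder terms.
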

\begin{rmk}
\label{rmk:moment_difficulty}
	Recall that $\Tilde X[t] = X[t] + e^{t\Delta} \ur$. Both parts will cause problems when trying to bound the third line of the expression. For the first part, if we apply Lemma~\ref{le:stochastic_object} on $X$, we only obtain a bound of order $(CN\alpha_0)^{6N} \lambda_+^{2N}$, while we would like instead to obtain a bound of order $\rho^{6N} \lambda_+^{2N}$ for some $\rho < 1$, as in the first term of \eqref{e:main_result2}. This would still require us to take $\alpha_0$ to be small in a way that is dependent on $N$, which is not what we want. 
	For the second part, if the initial data is only in $\cC^{-\kappa}$, one can solely get a bound of order $(CN\alpha_0)^{6N} \lambda_+^{2N + 6\kappa N}$, but we want the order of $\lambda_+$ to be $\lambda_+^{2N}$. 
	We will discuss how to solve these problems in Section~\ref{sec:concentration}.
\end{rmk}
\begin{rmk}
	Here the bound is actually valid for all $\lambda \geq 0$, but we will only use it when $\lambda \geq N^2$ is large enough. In this regime the problem mentioned in the previous remark can be solved. When $\lambda < N^2$ we will bound the solution trivially by the definition \eqref{e:stopping_time} of the stopping time $T$.
\end{rmk}
\begin{proof}
By Itô's formula we have
\begin{equ}
	\frac{\md}{\md t}\Big(e^{\frac{t}{2}}\big(\|\wl\|^2\big)^N\Big) = e^{\frac{t}{2}} \left( \frac{1}{2}\|\wl\|^{2N} + N \|\wl\|^{2N-2} \circ \frac{\md}{\md t} \|\wl\|^2 \right)\;.
\end{equ}
Plugging \eqref{e:energy_estimate} into it, we get
\begin{equs}
    \,&e^{\frac{T \wedge a}{2}} \|\wl[T \wedge a]\|^{2N} - \|\wl[0]\|^{2N}\\ 
    &= \int_0^{T\wedge a} e^{t/2} \Bigg[ \frac{1}{2}\|\wl\|^{2N} - 2N \|\wl\|^{2N-2} \|\grad \wl\|^2 \quad \\
    \label{e:N_moment_main}
    & + 2N \|\wl\|^{2N-2} \left(\bracket{\grad \wl, 2w \para \pP_{\lambda, K} \Tilde X} + \bracket{\grad \wl, 2\wl \otimes_s \wh} \right) \\
    \label{e:N_moment_small}
    & + 2N \|\wl\|^{2N-2} \bracket{\wl, \rR} + 2N \bracket{\|\wl\|^{2N-2} \wl, \circ\, (\lL_{\lambda} \xi_1 + \xi_2)} \Bigg] \md t \;. \qquad 
\end{equs}
Therefore, it suffices to bound \eqref{e:N_moment_main} and \eqref{e:N_moment_small}. For \eqref{e:N_moment_main}, it suffices to prove
\begin{equs}
	\,&N \|\wl\|^{2N-2} |\bracket{\grad \wl, 2w  \para \pP_{\lambda, K} \Tilde X}| \\
	\label{e:N_moment_1}
	&\leq \delta N \|\grad \wl\|^2 \|\wl\|^{2N - 2} + C^N 
+ C^N \lambda_+^{2N} \left(\lambda_+^{\kappa} \|\pP_{\lambda, K} \Tilde X\|_{\Besov{-\kappa}{4, \infty}} \right)^{2N/(\frac{1}{2} - \frac{500\kappa}{3})}  \;,  \qquad  \qquad \\
	\,&\int_0^{T\wedge a}N \|\wl\|^{2N-2} |\bracket{\grad \wl, 2\wl \otimes_s \wh}| \md t \\ &\leq \int_0^{T\wedge a} \Big(\delta N \|\grad \wl\|^2 \|\wl\|^{2N - 2} + C^N 
\label{e:N_moment_2}
+C^N \lambda_+^{2N} \big( \lambda_+^{\frac{1}{2}} \|\hH_{\lambda} \Tilde X\|_{\Besov{-1/2}{4, 6N}}\big)^{6N}\Big) \md t \;.
\end{equs}
Note that for \eqref{e:N_moment_1} we can prove a bound that is pointwise in time, but in \eqref{e:N_moment_2} we have to take averages in time.
For \eqref{e:N_moment_small}, it suffices to prove
\begin{equs}
\,& \EE \int_0^{T\wedge a} \Big( N \|\wl\|^{2N-2} \bracket{\wl, \rR} + N \bracket{\|\wl\|^{2N-2} \wl, \circ\, (\lL_{\lambda} \xi_1 + \xi_2)}  \Big) \md t \\
\label{e:N_moment_3}
&\leq \EE \int_0^{T \wedge a} \delta N \|\grad \wl\|^2 \|\wl\|^{2N - 2} \md t + (C\alpha_0)^{2N} \lambda_+^{2N}a + (CN)^{N}a\;. \qquad 
\end{equs} 
\step{Proof of (\ref{e:N_moment_1}):} 
By \eqref{e:middle_freq1} the left hand side is bounded by 
\begin{equ}
	N \|\wl\|^{2N-2} \left(\delta \|\grad \wl\|_+^2 + \lambda_+^2 \left(\lambda_+^{\kappa} \|\pP_{\lambda, K} \Tilde X\|_{\Besov{-\kappa}{4, \infty}} \right)^{1/(\frac{1}{2} - \frac{500\kappa}{3})} \right)\;.
\end{equ}
For the first term, we bound $\|\grad \wl\|_+^2$ by $C\|\grad \wl\|^2 + C$, and use Young's inequality to get 
\begin{equ}
	CN\|\wl\|^{2N-2} \leq \delta (N-1) \|\wl\|^{2N} + \Tilde{C}^N \;.
\end{equ}
For the second term, we use Young's inequality to get that it is bounded by
\begin{equ}
	\delta(N-1)\|\wl\|^{2N} + C^N \lambda_+^{2N} \left(\lambda_+^{\kappa} \|\pP_{\lambda, K} \Tilde X\|_{\Besov{-\kappa}{4, \infty}} \right)^{2N/(\frac{1}{2} - \frac{500\kappa}{3})}\;. 
\end{equ}

\step{Proof of (\ref{e:N_moment_2}):}
By \eqref{e:large_freq1} we get that 
\begin{equ}
	N \|\wl\|^{2N-2} |\bracket{\grad \wl, \wl \otimes_s \wh}| \lesssim N \|\grad \wl\|^{\frac{5}{3}} \|\wl\|^{2N - \frac{5}{3}} \|\wh\|_{L^3} \;. 
\end{equ}
By Corollary~\ref{cor:wl_bound} and Young's inequality, this is bounded by 
\begin{equs}
	\,&\big(\|\grad \wl\|^{\frac{5}{3}} \|\wl\|^{2N - 2}\big) \cdot \big(\lambda_+^{\frac{1}{3}} \|\wh\|_{\Besov{0}{3, \infty}} \big) \\
	& \leq \delta \|\grad \wl\|^{\frac{10N}{6N - 1}} \|\wl\|^{(2N - 2)\frac{6N}{6N - 1}} + C^N \lambda_+^{2N} \|\wh\|_{L^3}^{6N} \;.
\end{equs}
The first term can be bounded by the dissipation term. Regarding the second term, we argue similarly to \eqref{e:large_freq2} and \eqref{e:large_freq3}. By Lemma~\ref{e:high_freq_func}, the maximal regularity estimate \eqref{e:Schauder2}, the paraproduct estimate \eqref{e:para1}, and our definition for $K$, we get
\begin{equs}
	\int_0^{T\wedge a} \|\wh\|_{\Besov{0}{3, \infty}}^{6N} &\leq C^N \int_0^{T\wedge a} \lambda_+^{-\frac{3N}{2}}\|\wh\|_{\Besov{1/4}{3, \infty}}^{6N} \\
	&\leq C^N \lambda_+^{-\frac{3N}{2}}\int_0^{T\wedge a} \|w \para \hH_K \Tilde{X}\|_{\Besov{-3/4}{3, 6N}}^{6N} \md t \\
	&\leq C^N \lambda_+^{-\frac{3N}{2}}\int_0^{T\wedge a} \|w\|_{L^{12}}^{6N} \|\hH_K \Tilde X\|_{\Besov{-3/4}{4, 6N}}^{6N} \md t \\
	&\leq C^N \lambda_+^{-\frac{3N}{2}}\int_0^{T \wedge a} \|w\|_{L^{12}}^{6N} K^{-\frac{3N}{2}} \|\hH_K \Tilde X\|_{\Besov{-1/2}{4, 6N}}^{6N} \md t \\
	\label{e:moment_large_freq}
	&\leq C^N \lambda_+^{3N} \int_0^{T \wedge a} \|\hH_{\lambda} \Tilde X\|_{\Besov{-1/2}{4, 6N}}^{6N} \md t \;.
\end{equs}

\step{Proof of (\ref{e:N_moment_3}):} 
By \eqref{e:energy_estimate2_small} we get that  before $T$ we have 
\begin{equ}
	N \|\wl\|^{2N - 2} \bracket{\wl, \rR} \leq N \|\wl\|^{2N - 2}\left(\delta \|\grad \wl\|^2 + C\alpha_0 \lambda_+^2 + C\right)\;.
\end{equ}
By Young's inequality, we have
\begin{equ}
	N\|\wl\|^{2N-2} \big(\alpha_0 \lambda^2 + C \big) \leq C^{N-1} \big(\alpha_0 \lambda^2 + C\big)^N + \delta(N-1) \|\wl\|^{2N}\;.
\end{equ}
In order to bound the expectation of the Stratonovich integral, we rewrite it as an Itô integral (whose expectation vanishes), 
plus the integral of the cross-variation of the integrand with $\lL_\lambda\xi_1 + \xi_2$. In order to compute the latter, we note that
\begin{equ}
	\d_t \big(N \|\wl\|^{2N-2} \wl \big) = N(N-1) \|\wl\|^{2N - 4} \d_t \|\wl\|^2 \cdot \wl + N \|\wl\|^{2N-2} \cdot \d_t \wl\;.
\end{equ}
Plugging \eqref{e:energy_estimate} into it, one can compute that the cross-variation is bounded by
\begin{equ}
	\label{e:moment_2}
	\EE \int_0^{T\wedge a} \Big(CN(N-1) \|\wl\|^{2N-2} + \|\wl\|^{2N-2} \big(\alpha_0^2 \lambda^2 + C \big)\Big)\md t\;.
\end{equ}
To bound \eqref{e:moment_2}, by Young's inequality, we have
\begin{equs}
	N(N-1)\|\wl\|^{2N-2} &\leq C^{N-1} (N-1)^{N} + \delta(N-1)\|\wl\|^{2N} \;, \\
	N\|\wl\|^{2N-2} \big(\alpha_0^2 \lambda^2 + C \big) &\leq C^{N-1} \big(\alpha_0^2 \lambda^2 + C\big)^N + \delta(N-1) \|\wl\|^{2N}\;.
\end{equs} 
The bound \eqref{e:N_moment_3} then follows.
\end{proof}

\subsection{Concentration effect}\label{sec:concentration}
Each Littlewood--Paley block of $\hH_{\lambda} X$ either vanishes or comprises 
at least of the order of $\lambda^2$ Gaussian degrees of freedom, which leads to good concentration effects for their $L^p$ norms when $\lambda$ is large. 
We exploit these to solve the first problem mentioned in Remark~\ref{rmk:moment_difficulty}. 
We start from the following tail estimate.
\begin{lem}
\label{le:Lp_concentration}
	Let $p\in \N^*$. There exists some constant $C = C(p)$ such that for any $j, N \geq 1$ and $t > 0$\,, we have
	\begin{equ}
	\label{e:Lp_concentration}
		\EE \|\Delta_j X[t]\|_{L^{2p}}^{2N} \leq \big (C\alpha_0)^{2N} \big(1\wedge 2^{2j} t\big)^{N} \Big(1 + N^{N} 2^{-jN/p}\Big)\;.
	\end{equ}
\end{lem}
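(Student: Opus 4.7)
Set $F := \|\Delta_j X[t]\|_{L^{2p}}^{2p}$, so the claim is equivalent to bounding $\EE F^{N/p}$. The plan is to decompose $F = \EE F + (F - \EE F)$ and use the elementary inequality $F^{N/p} \le 2^{N/p}\bigl((\EE F)^{N/p} + |F - \EE F|^{N/p}\bigr)$; the two summands will produce the $1$ and the $N^N 2^{-jN/p}$ appearing in the statement, respectively.

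For the mean, Fubini and the Gaussian moment bound from the proof of Lemma~\ref{le:stochastic_object} immediately give
$\EE F = \int_{\T^2} \EE|\Delta_j X(x,t)|^{2p}\md x \le C(p)\alpha_0^{2p}(1\wedge 2^{2j}t)^p$,
so $(\EE F)^{N/p} \le C(p)^{N/p}\alpha_0^{2N}(1\wedge 2^{2j}t)^N$, which contributes the $1$ in the parenthesis after absorbing $C(p)^{1/p}$ into the overall constant. The main work is the fluctuation bound. First I would establish the variance estimate
\begin{equ}
	\var(F) \le C(p)\,\alpha_0^{4p}(1\wedge 2^{2j}t)^{2p}\, 2^{-2j}
\end{equ}
by Gaussian Poincar\'e applied to $F$ seen as a functional of the Brownian motions $\{B_k\}_{k\neq 0}$: a direct computation gives
\begin{equ}
	D_{k,s}F = 2p\,\rho_j(k)\phi_k e^{-|k|^2(t-s)}\int_{\T^2} |\Delta_j X(x,t)|^{2p-2}\,\bracket{\Delta_j X(x,t),e_k(x)}\md x\;,
\end{equ}
so that, writing $g = |\Delta_j X|^{2p-2}\Delta_j X$ and using Plancherel together with the support condition $\supp \rho_j \subset \{|k|\sim 2^j\}$, one obtains
\begin{equ}
	\sum_k\int_0^t |D_{k,s}F|^2\md s \le Cp^2 \alpha_0^2 (1\wedge 2^{2j}t)\, 2^{-2j}\,\|g\|_{L^2}^2 \;.
\end{equ}
Taking expectations and using Lemma~\ref{le:stochastic_object} once more to control $\EE\|\Delta_j X\|_{L^{4p-2}}^{4p-2}$ produces the claimed variance bound. (Alternatively, a Wick/Hermite expansion of $|z|^{2p}$ yields $|\cov(|\Delta_j X(x)|^{2p},|\Delta_j X(y)|^{2p})| \le C(p)\sigma_j^{4p-4}|K(x,y)|^2$, whose double integral gives the same estimate via $\int |K(x,y)|^2\md y \le C\alpha_0^4(1\wedge 2^{2j}t)^2 2^{-2j}$.)

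To upgrade this $L^2$-control to higher moments I would invoke Nelson's Gaussian hypercontractivity: since $F$ is a polynomial of degree $2p$ in the underlying Gaussian family, so is $F-\EE F$, and hence $\|F-\EE F\|_{L^r} \le (r-1)^p\|F-\EE F\|_{L^2}$ for every $r\ge 2$. Choosing $r = N/p$ when $N\ge 2p$ (and using Jensen's inequality in the easier regime $N < 2p$) gives
\begin{equ}
	\EE |F-\EE F|^{N/p} \le ((N/p-1)^p)^{N/p}\var(F)^{N/(2p)} \le C(p)^N N^N \alpha_0^{2N}(1\wedge 2^{2j}t)^N\, 2^{-jN/p}\;,
\end{equ}
after using $(N/p-1)^N \le N^N$ and absorbing $p$-dependent constants. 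Combining this with the bound on $(\EE F)^{N/p}$ yields \eqref{e:Lp_concentration}. The only nontrivial point of the argument is the variance bound, where the factor $2^{-2j}$ (which ultimately becomes $2^{-jN/p}$ in the statement) arises from the concentration of $\Delta_j X$ on spatial scales $\lesssim 2^{-j}$; it is also crucial that the hypercontractive exponent be $(r-1)^p$ rather than $(r-1)^{2p}$, which is precisely what is afforded by the fact that $F$ has polynomial degree $2p$ (not $4p$) in the Gaussians.
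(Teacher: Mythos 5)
Your proof is correct and takes a genuinely different route from the paper. The paper's argument is based on an explicit Wiener chaos expansion: it writes $\|\Delta_j X_1[t]\|_{L^{2p}}^{2p}$ as a finite sum $\sum_{\ell=0}^p c_\ell\, C_{1,j}(t)^{p-\ell}\int_{\T^2}\Wick{(\Delta_j X_1)^{2\ell}}\,\md x$, observes that each integrated Wick power is the zeroth Fourier mode of a homogeneous chaos element, bounds its second moment by an explicit convolution sum (producing the factor $2^{-2j}$ from the $\sim 2^{2j}$ frequencies involved), and then applies Nelson's estimate chaos by chaos. Your approach replaces the combinatorics of the chaos decomposition with the pair (Gaussian Poincar\'e inequality, hypercontractivity for polynomials of bounded degree): the Malliavin-derivative computation in the Poincar\'e step produces exactly the same $2^{-2j}$ factor, through $\int_0^t e^{-2|k|^2(t-s)}\md s \lesssim 2^{-2j}(1\wedge 2^{2j}t)$ together with Plancherel on $g = |\Delta_j X|^{2p-2}\Delta_j X$, and because $F$ is a polynomial of degree $2p$ the hypercontractive constant $(r-1)^p$ is precisely what is needed to turn $\var(F)^{N/(2p)}$ into the final bound. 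Your route buys uniformity in $p$ (no case split between $p=1$ and $p\ge 2$) and avoids Wick-formula combinatorics at the cost of invoking Poincar\'e; the paper's route is more self-contained for a reader already carrying the Wick machinery. Two small points worth tightening in a final write-up: (i) $F - \EE F$ lies in $\bigoplus_{n=1}^{2p}\mathcal{H}_n$, not a single chaos, so Nelson should formally be applied componentwise with a Cauchy--Schwarz in the chaos grading, giving $\|F-\EE F\|_{L^r}\le C(p)(r-1)^p\|F-\EE F\|_{L^2}$ with a harmless $p$-dependent constant; (ii) the "easy regime" $N<2p$ should be dispatched directly via $\EE\|\Delta_j X\|_{L^{2p}}^{2N}\le C(p)\alpha_0^{2N}(1\wedge 2^{2j}t)^N$ from Lemma~\ref{le:stochastic_object} rather than via Jensen, since $N/p$ need not be $\le 1$.
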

\begin{rmk}	
As one can see in the proof, the factor $2^{-jN/p}$ reflects a concentration effect. In the discrete case and $p = \infty$, a very precise result of a similar flavor was proved in \cite{DRZ17}, but the scaling is a bit different from the $p < \infty$ case. 
\end{rmk}
\begin{proof}
	Write $X = (X_1, X_2)$. It suffices to prove the result for $X_1$. Let $C_{1, j}(t) = \EE |\Delta_j X_1(t, x)|^2$ which is independent of $x$. Note that by the arguments in Lemma~\ref{le:stochastic_object}, we have the bound 
	\begin{equ}[e:boundCj]
		C_{1, j}(t) \leq C \alpha_0^2 \big(1\wedge 2^{2j} t\big)\;.
	\end{equ}
	When $p = 1$, we have 
	\begin{equ}
		\|\Delta_j X_1[t]\|^2 - C_{1, j}(t) = \int_{\T^2} \big(\Delta_j X_1(t, x)\big)^2 - C_{1, j}(t) \,\md x = \int_{\T^2} \Wick{\big(\Delta_j X_1(t, x)\big)^2} \,\md x\;.
	\end{equ}
	This is just the zeroth Fourier mode of $\Wick{\big(\Delta_j X_1(t, x)\big)^2}$, which is
	\begin{equ}
		\sum_{k_1 + k_2 = 0} \rho_j(k_1) \rho_j(k_2) \Wick{\Hat{X}_1(k_1, t) \Hat{X}_1(k_2, t)} = \sum_{k} \rho^2_j(k) \Wick{|\Hat{X}_1(k, t)|^2}\;.
	\end{equ}
	Therefore, by Wick's formula we have 
	\begin{equs}
		\EE \Big(\|\Delta_j X_1[t]\|^2 -C_{1, j}(t)\Big)^{2} &\leq C \EE \Bigl(\sum_{k} \rho^2_j(k) \Wick{|\Hat{X}_1(k, t)|^2}\Bigr)^2 \\
		 &= C \sum_{k} \EE \Bigl( \rho^2_j(k) \Wick{|\Hat{X}_1(k, t)|^2}\Bigr)^2 
		 \leq C \alpha_0^4 2^{-2j} \big(1\wedge 2^{2j} t\big)^2\,,
	\end{equs}
	so that Nelson's estimate \slash hypercontractivity yields
	\begin{equs}
		\EE \Big(\|\Delta_j X_1[t]\|^2 -C_{1, j}(t)\Big)^{N} &\leq (CN)^{N} \left(\EE \Big(\|\Delta_j X_1[t]\|^2 -C_{1, j}(t)\Big)^{2}\right)^{N/2} \\
		\label{e:L2_concentration}
		&\leq \Big(C\alpha_0^2 N 2^{-j}\Big)^{N}\big(1\wedge 2^{2j} t\big)^{N}\;.
	\end{equs}
	Note that \eqref{e:L2_concentration} is actually stronger than \eqref{e:Lp_concentration}.
	When $p \geq 2$, first note that by Jensen's inequality it suffices to show that for any $N\in \N^*$, we have 
	\begin{equ}
		\EE \|\Delta_j X[t]\|_{L^{2p}}^{2pN} \leq \big (C\alpha_0)^{2pN} \big(1\wedge 2^{2j} t\big)^{pN} \Big(1 + N^{pN} 2^{-jN}\Big)\;.
	\end{equ}
	We have the chaos expansion
	\begin{equ}
		\|\Delta_j X_1[t]\|_{L^{2p}}^{2p} = \sum_{\ell = 0}^{p} \frac{(2p)!}{2^l l! (2p - 2l)!} C_{1, j}(t)^{p-\ell} \int_{\T^2} \Wick{\big(\Delta_j X_1(t, x)\big)^{2\ell}} \; \md x\;,
	\end{equ}
so that, using \eqref{e:boundCj} and the fact that $C$ is allowed to depend on $p$, it suffices to prove 
	\begin{equ}
	\label{e:Lp_Wick}
		\EE \Big( \int_{\T^2} \Wick{\big(\Delta_j X_1(t, x)\big)^{2\ell}} \; \md x \Big)^N \leq \big (C\alpha_0)^{2\ell N} \big(1\wedge 2^{2j} t\big)^{\ell N} \Big(1 + N^{\ell N} 2^{-jN}\Big)\;.
	\end{equ}
	As in the $p = 1$ case, performing a Fourier transform and applying Wick's formula, we get
	\begin{equs}
		\EE \left(\int_{\T^2} \Wick{\big(\Delta_j X_1(t, x)\big)^{2\ell}}\; \md x \right)^2 &= \EE \left( \sum_{k_1 + \cdots + k_{2\ell} = 0} \Wick{\prod_{i=1}^{2\ell} \rho_j(k_i) \Hat{X}_1(k_i, t)}\right)^2  \\
		&\leq C(\ell)\sum_{k_1 + \cdots + k_{2\ell} = 0} \prod_{i = 1}^{2\ell} \Big( \rho_j^2(k_i) \EE |\Hat X_1 (k_i, t)|^2 \Big) \\
		&\leq C(\ell)\alpha_0^{4\ell} 2^{-2j}\big(1\wedge 2^{2j} t\big)^{2\ell}\;.
	\end{equs}	
	The bound \eqref{e:Lp_Wick} then follows by using Nelson's estimate.
\end{proof}
\begin{rmk}
	From the proof we see that we are essentially estimating tail bounds for Wick powers of Gaussian fields. Therefore, it may be possible to use the techniques of generalised Fernique theorem obtained in \cite{FO10} to give an alternative proof. 
\end{rmk}
\begin{rmk}
	The tail bound \eqref{e:exp_tail} may be improved if \eqref{e:Lp_concentration} is proved for some fractional $p \in (1, 2)$, but in the current proof we rely on $p$ being an integer to get the required decay in $j$.
\end{rmk}
\begin{cor}
\label{cor:Lp_concentration}
	There exists some constant $C$ such that for any $t \in [0, 1]$, $N\in \N^*$ and $\gamma > 0$, we have 
	\begin{equ}
	\label{e:Lp_concentration2}
		\sup_{\lambda \geq N^2}\EE \lambda^{2 \gamma N} \|\hH_{\lambda} X[t]\|_{\Besov{-\gamma}{4, 2N}}^{2N} \leq \big (C\alpha_0)^{2N}\;.
	\end{equ}
\end{cor}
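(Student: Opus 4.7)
The plan is to expand the Besov norm into a discrete sum over Littlewood--Paley blocks, apply Lemma~\ref{le:Lp_concentration} block by block, and then sum the resulting geometric series, using the hypothesis $\lambda \geq N^2$ exactly to absorb the combinatorial factor $N^N$ coming from concentration.

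More precisely, by Definition~\ref{def:Hlambda} of $\hH_\lambda$ and the definition \eqref{e:BesovNorm} of the Besov norm, one has
\begin{equ}
\|\hH_\lambda X[t]\|_{\Besov{-\gamma}{4,2N}}^{2N}
= \sum_{j \geq \log_2^+(\lambda)-1} 2^{-2\gamma jN}\,\|\Delta_j X[t]\|_{L^4}^{2N}\;,
\end{equ}
so taking expectations and applying Lemma~\ref{le:Lp_concentration} with $p=2$ (and using $(1\wedge 2^{2j}t)^N \leq 1$ since $t \in [0,1]$) gives
\begin{equ}
\EE \|\hH_\lambda X[t]\|_{\Besov{-\gamma}{4,2N}}^{2N}
\leq (C\alpha_0)^{2N} \sum_{j \geq \log_2^+(\lambda)-1} 2^{-2\gamma jN}\bigl(1 + N^N 2^{-jN/2}\bigr)\;.
\end{equ}

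Since $\gamma > 0$, both geometric series converge: the first contributes $\lesssim \lambda^{-2\gamma N}$, and the second contributes $\lesssim N^N \lambda^{-(2\gamma + 1/2)N}$. Multiplying by $\lambda^{2\gamma N}$ yields
\begin{equ}
\lambda^{2\gamma N}\EE \|\hH_\lambda X[t]\|_{\Besov{-\gamma}{4,2N}}^{2N}
\leq (C\alpha_0)^{2N}\bigl(C + C N^N \lambda^{-N/2}\bigr)\;.
\end{equ}
The key (and only) step that uses the hypothesis $\lambda \geq N^2$ is the final estimate $N^N \lambda^{-N/2} \leq 1$, which follows because $\lambda^{N/2} \geq N^N$ under that hypothesis. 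Taking the supremum over $\lambda \geq N^2$ then yields \eqref{e:Lp_concentration2}, possibly after adjusting the constant $C$.

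There is no real obstacle here: the whole point of Lemma~\ref{le:Lp_concentration} (and of the exponent $2^{-jN/p}$ it produces) is to match exactly the threshold $\lambda \geq N^2$ via the choice $p=2$. A minor bookkeeping point is that one must take $p$ an integer in Lemma~\ref{le:Lp_concentration}, which is why the Besov exponent $4$ (corresponding to $2p=4$, $p=2$) is fixed in the statement; had one chosen a different integrability exponent one would need the corresponding integer $p$.
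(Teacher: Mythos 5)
Your proof is correct and takes essentially the same approach as the paper's (which is a one-liner: expand the Besov norm as a sum over Littlewood--Paley blocks and invoke Lemma~\ref{le:Lp_concentration}). You fill in the details properly and, importantly, isolate the only place the hypothesis $\lambda \geq N^2$ is used, namely to make $N^N 2^{-jN/2} \leq N^N\lambda^{-N/2}\leq 1$ for the blocks retained by $\hH_\lambda$.

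Two very small bookkeeping remarks, neither of which affects correctness. First, the identity you write should really carry $\|\Delta_j \hH_\lambda X[t]\|_{L^4}$ rather than $\|\Delta_j X[t]\|_{L^4}$ on the right-hand side; the two differ (only) for $j$ at the threshold because neighbouring Littlewood--Paley blocks overlap, but one has $\|\Delta_j\hH_\lambda X\|_{L^4}\lesssim \|\Delta_j X\|_{L^4}$ uniformly, so the inequality you need is fine. Second, the geometric sums carry constants $\frac{1}{1-2^{-2\gamma N}}$ that are not uniform in $\gamma$ (they blow up as $\gamma \to 0$ with $N$ fixed); strictly speaking the constant $C$ in the corollary should therefore be read as $\gamma$-dependent, though in the paper's applications $\gamma$ is always one of the fixed values $\kappa$ or $\tfrac{1}{2}$, so this is harmless. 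Both points are features of the paper's own statement/proof as much as of yours.
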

\begin{proof}
	We expand the Besov norm as
	\begin{equ}
		\|\hH_{\lambda} X[t]\|_{\Besov{-\gamma}{4, 2N}}^{2N} = \sum_{j \geq -1} 2^{-j\gamma \cdot 2N}\|\Delta_j \hH_{\lambda} X[t]\|_{L^4}^{2N} \;.
	\end{equ}
	The result then follows from Lemma~\ref{le:Lp_concentration}.
\end{proof}
To address the second problem mentioned in Remark~\ref{rmk:moment_difficulty}, we define a new Lyapunov function \eqref{e:N_Lyapunov} where more requirements are added to the rough part
\begin{equs}
\label{e:N_Lyapunov}
	V^{(N)}_{\alpha}(u) := \inf\Bigl\{&\|\ur\|_{\cC^{-\kappa}} + \|\us\|: u = \ur + \us, \; \|\ur\|_{\cC^{-\kappa}} \leq \alpha, \\
	& \|\ur\|_{\Besov{-\kappa/N}{2, 2}} \leq \alpha\,, \int_0^1 \|e^{t\Delta} \ur\|_{\Besov{0}{4, pN}}^{pN} \md t \leq \alpha^{pN}, p \in [1, 6]\Bigr\}\;.
\end{equs}
The new requirements added in the second line guarantee that $\ur$ is regular enough. It remains to show that the solution for \eqref{e:SNS} can be separated in such a way. Since $\ur$ is essentially the high frequency part of $X[T]$, where $T$ is the stopping time \eqref{e:stopping_time}, this amounts to getting some bound for $X[T]$. We begin with the following bound for the supremum in time, which follows from a standard Kolmogorov type argument. The desired bound for $X[T]$ will be obtained in Proposition~\ref{pr:N_stochastic_object} later.
\begin{lem}
\label{le:sup_Lp_concentration}
	Let $p\in \N^*$. There exists some constant $C = C(p)$ such that for any $j, N \geq 1$, we have
	\begin{equ}
	\label{e:sup_Lp_moment}
		\EE \sup_{0\leq t \leq 1} \|\Delta_j X[t]\|_{L^{2p}}^{2N} \leq  2^{4j}(C\alpha_0)^{2N} \Big(1 + N^{N} 2^{-jN/p}\Big)\;.
	\end{equ}
	As a result, there exists $j_*$ such that for $j \geq j_*$, we have 
	\begin{equ}
	\label{e:sup_Lp_concentration}
		\PP\left(\sup_{0\leq t \leq 1} \|\Delta_j X[t]\|_{L^{2p}} \geq j^{1/2}\alpha_0  \right) \leq C 2^{-6j}\;.
	\end{equ}
\end{lem}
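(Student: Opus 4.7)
The plan is to upgrade the pointwise bound of Lemma~\ref{le:Lp_concentration} to a supremum-in-time bound via a Kolmogorov-type dyadic chaining, and then deduce \eqref{e:sup_Lp_concentration} by Markov's inequality with moment exponent growing linearly in $j$. The key input is an increment analogue of Lemma~\ref{le:Lp_concentration}: for $0 \le s < t \le 1$, the field $G := \Delta_j X[t] - \Delta_j X[s]$ is a stationary, mean-zero Gaussian field on $\T^2$ with Fourier modes supported on $|k| \sim 2^j$ and per-mode variance $\EE |\hat G(k)|^2 \lesssim \alpha_0^2\, \min(t-s, |k|^{-2})$. Running the same Wick expansion and Nelson hypercontractivity argument as in the proof of Lemma~\ref{le:Lp_concentration} verbatim yields
\[
\EE \|\Delta_j X[t] - \Delta_j X[s]\|_{L^{2p}}^{2N} \le (C\alpha_0)^{2N}\, \bigl(\min(2^{2j}(t-s),1)\bigr)^{N}\, \bigl(1 + N^{N}\, 2^{-jN/p}\bigr).
\]

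To prove \eqref{e:sup_Lp_moment}, I would discretise $[0,1]$ at the natural scale of $\Delta_j X$ by setting $t_k := k\, 2^{-2j}$, $k = 0, \ldots, 2^{2j}$, and split
\[
\sup_{t \in [0,1]} \|\Delta_j X[t]\|_{L^{2p}}^{2N} \le 2^{2N} \max_k \|\Delta_j X[t_k]\|_{L^{2p}}^{2N} + 2^{2N}\, \max_k \sup_{t \in [t_k, t_{k+1}]} \|\Delta_j X[t] - \Delta_j X[t_k]\|_{L^{2p}}^{2N}.
\]
Lemma~\ref{le:Lp_concentration} and a union bound handle the mesh term, at a cost of a factor $2^{2j}$. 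For the oscillation on a single interval $[t_k, t_{k+1}]$, the increment bound reads $(C\alpha_0)^{2N}(2^{2j}(t-s))^{N}(1 + N^{N} 2^{-jN/p})$ since $2^{2j}(t-s) \le 1$ there. For $N \ge 2$, a dyadic chaining argument (approximating $t$ by nested mesh points at scales $2^{-2j-n}$, with at most $2^{n+1}$ comparison pairs per scale, and using the geometric sum $\sum_n 2^{n(1-N)/(2N)} \lesssim 1$) produces $\EE \sup_{t \in [t_k, t_{k+1}]}\|\Delta_j X[t] - \Delta_j X[t_k]\|_{L^{2p}}^{2N} \lesssim (C\alpha_0)^{2N}(1 + N^{N} 2^{-jN/p})$ with an absolute constant. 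Summing crudely (expectation of max $\le$ sum of expectations) over the $2^{2j}$ intervals multiplies this by $2^{2j}$, and the total fits inside the target $2^{4j}(C\alpha_0)^{2N}(1 + N^{N} 2^{-jN/p})$; the case $N = 1$ is recovered from $N = 2$ by Jensen.

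For \eqref{e:sup_Lp_concentration}, I apply Markov's inequality with $N := \lceil c j \rceil$ for a sufficiently large absolute constant $c$, giving
\[
\PP\Bigl(\sup_{t \in [0,1]} \|\Delta_j X[t]\|_{L^{2p}} \ge j^{1/2}\alpha_0\Bigr) \le \frac{2^{4j}\, C^{2N}\, (1 + N^{N} 2^{-jN/p})}{j^{N}}.
\]
For $j$ sufficiently large, the ``$1$''\dash contribution is bounded by $(C^2/j)^{cj} \cdot 2^{4j} \le 2^{-6j}$, while the remaining contribution $c^{cj} \cdot 2^{4j - cj^2/p}$ is super-exponentially small. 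The main technical step is verifying the increment analogue of Lemma~\ref{le:Lp_concentration} and tracking the constants through the chaining so that no $N$-dependent blow-up appears; the rest is routine bookkeeping.
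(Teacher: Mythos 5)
Your proposal is correct and follows the same overall strategy as the paper: derive a moment bound for the increment $\Delta_j X[t]-\Delta_j X[s]$ that matches the pointwise bound of Lemma~\ref{le:Lp_concentration} up to replacing $t$ by $t-s$ in the time factor, then obtain the supremum-in-time moment bound by a Kolmogorov-type chaining, and finally apply Markov's inequality with $N\sim j$. The only substantive difference is in how the increment bound is obtained. You recompute the per-mode variance of $\hat X(k,t)-\hat X(k,s)$ directly (correctly getting $\lesssim\alpha_0^2\min(t-s,|k|^{-2})$, hence per-point variance $\lesssim\alpha_0^2\bigl(1\wedge 2^{2j}(t-s)\bigr)$) and then re-run the Wick/hypercontractivity argument for the increment field, which is stationary mean-zero Gaussian in $x$ so the argument indeed carries over verbatim. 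The paper instead uses the in-law identity $X[t]-e^{(t-t')\Delta}X[t']\eqlaw X[t-t']$ together with the heat-flow estimate~\eqref{e:heat_flow3}, which lets it derive the increment bound as a direct corollary of the already-proved Lemma~\ref{le:Lp_concentration} without repeating the chaos computation. Your route is more self-contained but re-proves some work; the paper's is slightly slicker. The remaining differences are cosmetic: you use a $2^{-2j}$ mesh with an explicit dyadic chaining, the paper uses a $2^{-4j}$ mesh and cites the moment form of the Kolmogorov continuity theorem from \cite{RY99}; both yield a factor comfortably within $2^{4j}$ with an $N$-independent chaining constant (the crucial point, which you correctly identify, is that the Hölder exponent $N/(2\cdot 2N)=1/4$ does not degenerate as $N\to\infty$). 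The Markov step with $N=\lceil cj\rceil$ (the paper effectively takes $N\approx j$) gives the super-polynomial decay; your numerics are fine.
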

\begin{proof}
	Note that in law for any $0 \leq t' < t \leq 1$ we have 
	\begin{equ}
	\label{e:stationarity}
		X[t] - e^{(t-t')\Delta} X[t'] \eqlaw X[t-t']\;.
	\end{equ}
	By heat flow estimate \eqref{e:heat_flow3} and Lemma~\ref{le:Lp_concentration}, we have 
	\begin{equs}
		\EE \| e^{(t-t')\Delta} \Delta_j X[t'] - \Delta_j X[t'] \|_{L^{2p}}^{2N}& \leq \big(C 2^{2j} (t-t')\big)^{2N} \EE \|\Delta_j X[t']\|_{L^{2p}}^{2N} \\
		& \leq (C\alpha_0)^{2N} \big(2^{2j} (t-t')\big)^{2N} \Big(1 + N^{N} 2^{-jN/p}\Big)\;.
	\end{equs}
	Then by H\"older inequality and Lemma~\ref{le:Lp_concentration}, for any $|t - t'| \leq 2^{-4j}$, we get that $\EE \| \Delta_j X[t] - \Delta_j X[t'] \|_{L^{2p}}^{2N}$ is bounded by 
	\begin{equs}
		\,&\EE \| \Delta_j X[t] - \Delta_j X[t'] \|_{L^{2p}}^{2N}\\
		&\leq 2^{2N} \Big(\EE \|\Delta_j X[t] - e^{(t-t')\Delta}\Delta_j X[t']\|_{L^{2p}}^{2N} + \EE \|e^{(t-t')\Delta}\Delta_j X[t'] - \Delta_j X[t']\|_{L^{2p}}^{2N} \Big)\\
		&\leq \big(C\alpha_0\big)^{2N} \Big(1 + N^{N} 2^{-jN/p}\Big) \Big( \big(2^{2j} (t-t')\big)^{N} + \big(2^{2j} (t-t')\big)^{2N} \Big)\\
		\label{e:concentration_translation}
		&\leq \big(C\alpha_0\big)^{2N} \Big(1 + N^{N} 2^{-jN/p}\Big) (t - t')^{N/2} \;.
	\end{equs}
	Following the same argument as in \cite[Theorem~2.1]{RY99} we get that for any $0 \leq t \leq 1- 2^{-4j}$, 
	\begin{equ}
		\EE \Big(\sup_{s\in [t, t + 2^{-4j}]}\| \Delta_j X[s] \|_{L^{2p}}^{2N}\Big) \leq \big(C\alpha_0\big)^{2N} \Big(1 + N^{N} 2^{-jN/p}\Big) (t - t')^{N/2} \;.
	\end{equ}
	Note that 
	\begin{equ}
		\EE \sup_{t\in [0, 1]} \left\|\Delta_j X[t]\right\|^{2N}_{L^{2p}} \leq \sum_{1 \leq \ell \leq 2^{4j}} \EE \sup_{t\in [(\ell-1) 2^{-4j}, \ell 2^{-4j}]} \|\Delta_j X[t]\|^{2N}_{L^{2p}}\;.
	\end{equ}  
	The bound \eqref{e:sup_Lp_moment} then follows.
	As for \eqref{e:sup_Lp_concentration}, by Chebyshev inequality and \eqref{e:sup_Lp_moment}, for any $M > 0$, $N \in \N^*$ we get 
	\begin{equ}
		\PP\left(\sup_{0\leq t \leq 1} \|\Delta_j X[t]\|_{L^{2p}} \geq M \right) \leq M^{-2N} 2^{4j}(C\alpha_0)^{2N} \Big(1 + N^{N} 2^{-jN/p}\Big)\;.
	\end{equ}
	Choosing some $N \in [M^2\alpha_0^{-2}, 2M^2\alpha_0^{-2}]$, we have 
	\begin{equ}
		\PP\left(\sup_{0\leq t \leq 1} \|\Delta_j X[t]\|_{L^{2p}} \geq M \right) \leq 2^{4j} \left(\frac{C\alpha_0}{M} \right)^{2M^2\alpha_0^{-2}} + 2^{4j} \left(\frac{C}{2^{j/p}}\right)^{2M^2\alpha_0^{-2}}\;.
	\end{equ}
	Now let $M = \alpha_0 j^{1/2}$. If $j$ is large enough we have $\frac{C\alpha_0}{M} \vee \frac{C}{2^{j/p}} \leq 2^{-5}$. The bound \eqref{e:sup_Lp_concentration} then follows.
\end{proof}

\begin{prop}
\label{pr:N_stochastic_object}
	For any $N\geq 2$, there exists a random variable $\Lambda$ such that for any $p \in [1, 6]$, we have
	\begin{equ}
	\label{e:high_freq_condition}
		\sup_{0\leq s \leq 1}\|\hH_{\Lambda} X[s]\|_{\Besov{-\kappa/N}{2, 2}} \leq \alpha_0\;, \quad \sup_{0\leq s \leq 1} \int_0^1 \|e^{t \Delta} \hH_{\Lambda} X[s]\|_{\Besov{0}{4, pN}}^{pN} \md t \leq \alpha_0^{pN}\;.
	\end{equ}
There furthermore exist constants $C$ and $C_*$ such that
\begin{equ}
	\label{e:low_freq_condition}
		\EE \sup_{0\leq s \leq 1} \|\pP_{C_*N \log N, \Lambda} X[s]\|^{2N} \leq (C\alpha_0^2)^{N} \;.
	\end{equ}
\end{prop}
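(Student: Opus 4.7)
The plan is to define $\Lambda$ as the smallest dyadic threshold past which the blockwise concentration bounds of Lemma~\ref{le:sup_Lp_concentration} are in force, verify \eqref{e:high_freq_condition} by direct summation on that event, and deduce \eqref{e:low_freq_condition} from the resulting tail control on $\Lambda$. Concretely, fix a deterministic threshold $L_0 = L_0(N,\kappa)$ to be chosen and, for integers $L \geq L_0$, let $E_L$ be the event that $\sup_{0\leq s\leq 1}\|\Delta_j X[s]\|_{L^{2p}} \leq j^{1/2}\alpha_0$ for every $j \geq L$ and every $p \in \{1,2\}$. These events are nested ($E_L \subseteq E_{L+1}$), so setting $L_\star := \inf\{L \geq L_0 : E_L \text{ holds}\}$ and $\Lambda := 2^{L_\star}$ is well-defined, and by \eqref{e:sup_Lp_concentration} combined with a union bound one obtains $\PP(L_\star > L) \leq \sum_{j \geq L} C\,2^{-6j} \lesssim 2^{-6L}$ for all $L \geq L_0$.

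On $E_{L_\star}$ the conditions in \eqref{e:high_freq_condition} reduce to two elementary dyadic sums. For the first,
\[
\sup_{0\leq s \leq 1}\|\hH_\Lambda X[s]\|_{\Besov{-\kappa/N}{2,2}}^2 \leq \alpha_0^2 \sum_{j \geq L_\star} j\cdot 2^{-2j\kappa/N}\,,
\]
whose right-hand side can be made $\leq \alpha_0^2$ by taking $L_0$ large enough in terms of $N$ and $\kappa$. For the second, combining the Littlewood--Paley heat-kernel bound $\|e^{t\Delta}\Delta_j f\|_{L^4} \lesssim e^{-ct 2^{2j}}\|\Delta_j f\|_{L^4}$ with $\int_0^1 e^{-cpNt\,2^{2j}}\,\md t \lesssim (pN\cdot 2^{2j})^{-1}$ gives
\[
\sup_{0\leq s \leq 1}\int_0^1 \|e^{t\Delta}\hH_\Lambda X[s]\|_{\Besov{0}{4,pN}}^{pN}\,\md t \lesssim \alpha_0^{pN}\sum_{j \geq L_\star}\frac{j^{pN/2}}{pN\cdot 2^{2j}}\,,
\]
and the sum is $\leq 1$ uniformly in $p \in [1,6]$ once $L_0$ is enlarged if necessary. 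Taking $L_0$ to be the maximum of the two thresholds pinned down here fixes the construction.

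For the moment bound \eqref{e:low_freq_condition}, write $M = C_\star N \log N$ and choose a dyadic threshold $\Lambda_\sharp \asymp M$ (lying just above $M$). Split
\[
\EE \sup_s \|\pP_{M,\Lambda} X[s]\|^{2N} = \EE\bigl[\,\cdot\,;\ \Lambda \leq \Lambda_\sharp\bigr] + \EE\bigl[\,\cdot\,;\ \Lambda > \Lambda_\sharp\bigr]\,.
\]
On $\{\Lambda \leq \Lambda_\sharp\}$ the projection $\pP_{M,\Lambda}$ lives on a frequency band of bounded logarithmic width, and decomposing into Littlewood--Paley blocks and applying Corollary~\ref{cor:Lp_concentration} with $\gamma = 0$---which, via the concentration factor $2^{-jN/p}$ from Lemma~\ref{le:Lp_concentration}, produces blockwise $L^{2N}$-moments of order $\alpha_0^{2N}$ without the naive combinatorial $N^N$---yields the desired $(C\alpha_0^2)^N$ bound. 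On $\{\Lambda > \Lambda_\sharp\}$, Cauchy--Schwarz against a crude $L^{4N}$-moment bound for $\sup_s\|\hH_M X[s]\|^{4N}$ together with the tail estimate $\PP(\Lambda > \Lambda_\sharp) \lesssim \Lambda_\sharp^{-6}$ kills the contribution once $\Lambda_\sharp$ is chosen large enough.

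The main obstacle is the tension between the two sides of the construction: the conditions \eqref{e:high_freq_condition} force $L_0$ (and hence the typical size of $\Lambda$) to be large, while the moment bound \eqref{e:low_freq_condition} requires both that the tail of $\Lambda$ beyond $\Lambda_\sharp$ decay fast enough to suppress the crude high-moment bound and that the moment on the low-$\Lambda$ event does not pick up a spurious factor of $N$. The gain $2^{-jN/p}$ in Lemma~\ref{le:Lp_concentration} is essential on both fronts: it drives the polynomial-in-scale tail decay of $\PP(L_\star > L)$ from \eqref{e:sup_Lp_concentration}, and it absorbs the combinatorial $N$-factor that would otherwise ruin the target constant $(C\alpha_0^2)^N$ when the $\pP_{M,\Lambda}$-band contribution is computed blockwise.
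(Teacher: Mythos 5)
Your construction of $\Lambda$ and the verification of \eqref{e:high_freq_condition} are essentially identical to the paper's: define $\Lambda$ through the smallest scale past which all Littlewood--Paley blocks obey $\sup_s\|\Delta_j X[s]\|_{L^4}\leq \alpha_0 j^{1/2}$, enforce a floor $\log_2\Lambda\geq C_*N\log N$, and then \eqref{e:high_freq_condition} follows from deterministic dyadic sums. The tail bound $\PP(\log_2\Lambda > j_0)\lesssim 2^{-6j_0}$ via \eqref{e:sup_Lp_concentration} is also the same. So far so good.

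The argument for \eqref{e:low_freq_condition}, however, does not close. Two concrete issues:

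\textbf{(a) The event $\{\Lambda>\Lambda_\sharp\}$.} You propose to Cauchy--Schwarz against ``a crude $L^{4N}$-moment bound for $\sup_s\|\hH_M X[s]\|^{4N}$.'' But $\hH_M X[s]$ is almost surely \emph{not} in $L^2$: from Lemma~\ref{le:stochastic_object} one has $\EE\|\Delta_j X[s]\|^2\asymp \alpha_0^2(1\wedge 2^{2j}s)$, which does not decay in $j$, so $\sum_{j}\|\Delta_j X[s]\|^2$ diverges a.s.\ for $s>0$. The ``crude moment'' you want to place on the other side of Cauchy--Schwarz is infinite. (This is exactly why $\Lambda$ has to appear inside the estimate block by block, not only as a global cut-off.)

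\textbf{(b) The event $\{\Lambda\leq\Lambda_\sharp\}$.} Here you want ``blockwise $L^{2N}$-moments of order $\alpha_0^{2N}$.'' But \eqref{e:low_freq_condition} requires control of a \emph{time supremum}, so the relevant input is Lemma~\ref{le:sup_Lp_concentration}, not Lemma~\ref{le:Lp_concentration} or Corollary~\ref{cor:Lp_concentration} (the latter moreover requires $\gamma>0$ and is a fixed-$t$ statement, so ``$\gamma=0$'' is not available). Lemma~\ref{le:sup_Lp_concentration} carries the Kolmogorov prefactor $2^{4j}$, i.e.\ $\EE\sup_s\|\Delta_j X[s]\|^{2N}\lesssim 2^{4j}(C\alpha_0)^{2N}$. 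Since the band starts at $j\geq C_*N\log N$, this prefactor is already $N^{4C_*N}$ per block, so a naive blockwise sum (even over an $O(1)$-wide band) does not give $(C\alpha_0^2)^N$.

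What the paper does instead is precisely designed to resolve the tension you correctly identified: after expanding $\|\pP\|^{2N}$ by Plancherel and factoring with Hölder into a product of $N$ single-block terms, it applies Cauchy--Schwarz \emph{termwise}, producing $(\EE\sup_s\|\Delta_{j_k}X[s]\|^{4N})^{1/(2N)}\cdot \PP(j_k\leq\log_2\Lambda)^{1/(2N)}$. The first factor is $\lesssim 2^{2j_k/N}\alpha_0^2$ (the $2^{4j}$ from the Kolmogorov argument, damped by the $1/(2N)$ power), and the second is $\lesssim 2^{-3j_k/N}$ by the $\Lambda$-tail. The net $2^{-j_k/N}$ per block sums geometrically starting from $j\geq C_*N\log N$, giving $\leq 1$. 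It is this block-by-block coupling of the $2^{4j}$ moment factor with the $\Lambda$-tail probability that makes the bound close; your global split on $\Lambda\lessgtr\Lambda_\sharp$ severs that coupling and cannot be repaired as stated.
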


\begin{proof}
Define the random variable
\begin{equ}
	\log_2(\Lambda) := \sup \Bigl\{j: \sup_{0\leq s \leq 1} \|\Delta_j X[s]\|_{L^{4}} \geq \alpha_0 j^{\frac{1}{2}}\Bigr\} \vee j_*\vee C_*N \log N\;,
\end{equ}
where $C_*$ is some large constant to be determined later and $j_*$ is the constant in Lemma~\ref{le:sup_Lp_concentration}. 
Then for $j_0 \geq j_* \vee C_*N \log N$, by \eqref{e:sup_Lp_concentration} we have
\begin{equ}
\label{e:Lambda_tail}
	\PP(\log_2(\Lambda) \geq j_0) \leq \sum_{j = j_0}^{\infty} \PP\Bigl(\sup_{0\leq s \leq 1}\|\Delta_j X[s]\|_{L^4} \geq \alpha_0 j^{\frac{1}{2} } \Bigr)
	\leq \sum_{j = j_0}^{\infty} C 2^{-6j} \leq C 2^{- 6j_0}\;.
\end{equ}
Since by definition of $\Lambda$ we have $\sup_{j \geq 1} j^{-\frac{1}{2}} \sup_{0\leq s \leq 1}\|\Delta_j \hH_{\Lambda} X[s]\|_{L^4} \leq \alpha_0$, we can pick $C_*$ large enough (independent of $\alpha_0, N$) such that
\begin{equ}
	\sup_{0\leq s \leq 1} \|\hH_{\Lambda} X[s]\|_{\Besov{-\kappa/N}{2, 2}} \leq \alpha_0 \sum_{j \geq j_*\vee C_* N \log N} j^{1/2} 2^{-\kappa j / N} \leq \alpha_0 \; .
\end{equ}
By heat flow estimate, we have 
\begin{equs}
	\sup_{0\leq s \leq 1} \int_0^1 \|e^{t\Delta} \hH_{\Lambda} X[s]\|_{\Besov{0}{4, pN}}^{pN} \md t &\leq \sum_{j \geq \Lambda} \int_0^1 e^{-c 2^{2j}t \cdot pN} \sup_{0\leq s \leq 1}\|\Delta_j \hH_{\Lambda} X[s]\|_{L^4}^{pN} \md t\\
	&\leq \alpha_0^{pN} \sum_{j \geq C_* N \log N} \frac{j^{pN/2}}{c 2^{2j} \cdot pN} \;.
\end{equs}
We can pick $C_*$ large enough (independent of $\alpha_0, N$), such that for any $1\leq p \leq 6$, we have
\begin{equ}
	\sum_{j \geq C_*N \log N} \frac{j^{pN/2}}{c 2^{2j} \cdot pN} \leq 1\;. 
\end{equ}
Therefore, the bound \eqref{e:high_freq_condition} holds and it remains to show \eqref{e:low_freq_condition}. We use Plancherel to expand the $2N$th power and use Hölder's inequality to get 
\begin{equs}
	\,& \EE \sup_{0\leq s \leq 1} \|\pP_{C_*N \log N, \Lambda} X[s]\|^{2N}  \\
	&= \EE \sup_{0\leq s \leq 1} \Bigl(\sum_{j} \|\Delta_j X[s]\|^2 \one_{C_* N\log N < j \leq \Lambda} \Bigr)^{N} \\
	&\leq \sum_{j_1, \cdots, j_{N} } \EE \prod_{k=1}^{N} \sup_{0\leq s \leq 1} \|\Delta_{j_k} X[s]\|^2 \one_{C_*N\log N < j_k \leq \Lambda} \\ 
	&\leq \sum_{j_1, \cdots, j_{N} } \prod_{k=1}^{N} \Bigl(\EE \sup_{0\leq s \leq 1} \|\Delta_{j_k} X[t]\|^{4N}\Bigr)^{\frac{1}{2N}} \PP\left(C_* N\log N < j_k \leq \Lambda \right)^{\frac{1}{2N}}\;.
\end{equs}
By Lemma~\ref{le:sup_Lp_concentration} and \eqref{e:Lambda_tail}, each term in the product is bounded by 
\begin{equ}
	C\alpha_0^2 2^{2j_k/N} \big(1 + N 2^{-j_k} \big) 
	2^{- 3j_k /N} \boldsymbol{1}_{j_k \geq C_* N \log N} 
	\leq C\alpha_0^2 2^{-j_k/N}\boldsymbol{1}_{j_k \geq C_* N \log N} \;.
\end{equ}
Therefore, if $C_*$ is large enough, we get  
\begin{equ}
	\EE \|\hH_{C_*N \log N} \lL_{\Lambda} X[T]\|^{2N}  \leq (C\alpha_0)^{2N}\big( \sum_{j} 2^{-j/N}\boldsymbol{1}_{j_k \geq C_* N \log N} \big)^{N} \leq (C\alpha_0)^{2N}\;,
\end{equ}
thus proving the claim.
\end{proof}

\subsection{Proof of the main result}

Now we are ready to prove the moment bounds for the solution. Recall our new Lyapunov function \eqref{e:N_Lyapunov}. We can prove the following generalisation of Proposition~\ref{pr:decay_1}.
\begin{prop}
\label{pr:moment_decay_1}
	Fix $t_0$ such that $\PP(T < t_0) < \frac{1}{10}$\,. Let $N\in \N^*$ and $V^{(N)}_{\alpha_0}(u[0]) < \infty $\,. 
	If $\alpha_0$ is sufficiently small (independent of $N$), then we have
	\begin{equs}
	\label{e:moment_decay_1}
		\EE \left(V^{(N)}_{\alpha_0}(u[T\wedge t_0])\right)^N &\leq \rho \big(V^{(N)}_{2\alpha_0}(u[0])\big)^{N} + (CN)^{2N}\;, \\
	\label{e:moment_short_time}
		\sup_{a\leq t_0} \EE \bigl(V_{2\alpha_0}^{(N)}(u[T\wedge a])\bigr)^N &\leq 6^N \big(V^{(N)}_{\alpha_0}(u[0])\big)^{N} +(CN)^{2N} \;,
	\end{equs}
	where $0 < \rho < 1$ and $C>0$ are some constants independent of $N$.
\end{prop}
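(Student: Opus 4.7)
The plan is to mirror Proposition~\ref{pr:decay_1} with two key modifications: the main bound Proposition~\ref{pr:main_bound} is replaced by the moment energy estimate \eqref{e:moment_main_bound}, and the rough part of $u[T\wedge t_0]$ is isolated using the random cut-off $\Lambda$ from Proposition~\ref{pr:N_stochastic_object} rather than the deterministic $\lambda$, so that all regularity conditions in the definition \eqref{e:N_Lyapunov} of $V^{(N)}_{\alpha_0}$ are automatically verified via \eqref{e:high_freq_condition}. First I would fix a near-optimal decomposition $u[0] = \ur[0] + \us[0]$ realising $V^{(N)}_{2\alpha_0}(u[0])$ with $\lambda = \|\us[0]\|$, and split into the regimes $\lambda \leq N^2$ and $\lambda \geq N^2$. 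In the small regime the stopping time definition \eqref{e:stopping_time} together with Corollary~\ref{cor:wl_bound} bound every piece of the ansatz for $u[T\wedge t_0]$ by $CN^2$ deterministically, so $N$th powers are absorbed into the $(CN)^{2N}$ overhead.

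In the large regime $\lambda \geq N^2$, apply \eqref{e:moment_main_bound} with $a = t_0$ and estimate its three probabilistic contributions as follows. The paraproduct term with exponent $\beta = 2N/(\tfrac{1}{2} - \tfrac{500\kappa}{3})$ is controlled via Corollary~\ref{cor:lambda_norm}, whose $\lambda^{-\kappa\beta}$ factor cancels the built-in amplification $\lambda^{\kappa\beta}$. The high-frequency term is split through $\tilde X = X + e^{t\Delta}\ur[0]$: the $X$-part is handled by Corollary~\ref{cor:Lp_concentration} (which is precisely why we need $\lambda \geq N^2$), and the $e^{t\Delta}\ur[0]$-part by condition~(3) in the definition \eqref{e:N_Lyapunov}. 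Using the initial-data bound $\|\wl[0]\|^{2N} \leq (\lambda + C\alpha_0\lambda^{\kappa})^{2N} \leq e^{C\alpha_0}\lambda^{2N}$, which is uniform in $N$ precisely because $\lambda \geq N^2$, one obtains
\begin{equ}
\EE\bigl[e^{(T\wedge t_0)/2}\|\wl[T\wedge t_0]\|^{2N}\bigr] \leq (1 + C\alpha_0)\lambda^{2N} + (CN)^{2N}\;,
\end{equ}
with $C$ independent of $N$. Cauchy--Schwarz combined with $\EE e^{-(T\wedge t_0)/2} \leq 1 - \delta$ from Proposition~\ref{pr:stopping_time_lower_bound} then yields $\EE\|\wl[T\wedge t_0]\|^N \leq \rho_0^{1/2}\lambda^N + (CN)^N$ with $\rho_0 = (1-\delta)(1+C\alpha_0) < 1$ for $\alpha_0$ small.

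To close, I would decompose $u[T\wedge t_0] = \hH_\Lambda X[T\wedge t_0] + (u - \hH_\Lambda X)$: the first summand is admissible as the rough part of $V^{(N)}_{\alpha_0}$ with size $\alpha_0$ by \eqref{e:high_freq_condition}, and the second summand expands as $\wl + K$ where $K = \pP_{\lambda\wedge\Lambda,\lambda\vee\Lambda} X + \hH_\lambda e^{(T\wedge t_0)\Delta}\ur[0] + Y + \wh$. Each piece of $K$ satisfies an $N$th-moment bound $\EE(\cdot)^N \leq (C'N)^N$ using \eqref{e:low_freq_condition} for the middle band, condition~(3) in \eqref{e:N_Lyapunov} together with heat smoothing for the initial-data piece, Lemma~\ref{le:wh_bound} for $\wh$, and $\|Y\|_{L^\infty}\leq \alpha_0^2$ for $Y$. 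Applying Young's inequality $(a+b)^N \leq (1 + c/N)^{N-1} a^N + (1 + N/c)^{N-1} b^N$ with $c > 0$ chosen so that $e^c \rho_0^{1/2} < 1$, the prefactor in front of $\|\wl\|^N$ is at most $e^c$, while the $K^N$-contribution is absorbed into $(CN)^{2N}$. Since $V^{(N)}_{2\alpha_0}(u[0])^N \geq \lambda^N$, this proves \eqref{e:moment_decay_1} with $\rho := e^c \rho_0^{1/2} < 1$. The short-time bound \eqref{e:moment_short_time} follows from the same decomposition using only the deterministic estimates from Corollary~\ref{cor:wl_bound} and Lemma~\ref{le:wh_bound}, with the factor $6^N$ arising from the triangle inequality on the six pieces in the expansion of $u[T\wedge a]$.

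The main obstacle I anticipate is keeping $\rho$ in \eqref{e:moment_decay_1} independent of $N$: a naive triangle inequality on the pieces making up $u - \hH_\Lambda X$ followed by binomial expansion would create a factor $2^N$ or $6^N$ in front of $\lambda^N$ and destroy the contraction property; the Young-type inequality with parameter $c$ fixed as a small constant reduces this prefactor to $e^c$ at the cost of an $N$-dependent multiplicative factor on the $K^N$ term, which is harmlessly absorbed by the $(CN)^{2N}$ overhead. A second subtlety is that the case split at $\lambda = N^2$ is forced simultaneously by the hypothesis $\lambda \geq N^2$ in Corollary~\ref{cor:Lp_concentration} and by the need for a uniform-in-$N$ control $\|\wl[0]\|^{2N}/\lambda^{2N} \leq e^{C\alpha_0}$, both of which fail in the small-$\lambda$ regime.
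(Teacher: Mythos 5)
Your treatment of the large-$\lambda$ regime matches the paper's: you invoke \eqref{e:moment_main_bound}, control the paraproduct contribution by Corollary~\ref{cor:lambda_norm}, control the high-frequency contribution by Corollary~\ref{cor:Lp_concentration} (which requires $\lambda \gtrsim N^2$) and condition~(3) of \eqref{e:N_Lyapunov}, note that $\|\wl[0]\|^{2N}/\lambda^{2N} \le e^{C\alpha_0}$ uniformly in $N$ because $\lambda \ge N^2$, apply Cauchy--Schwarz with Proposition~\ref{pr:stopping_time_lower_bound}, and use a Young-type weighted inequality with parameter $c/N$ so the contraction factor on $\lambda^N$ stays $(1+\delta)$ rather than $2^N$. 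Up to cosmetic differences (the paper cuts at $\lambda \ge C_*N^2$, which guarantees $\lambda \ge C_* N\log N$ for all $N$), that part is sound.

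The genuine gap is in the small-$\lambda$ regime and, by the same mechanism, in your argument for \eqref{e:moment_short_time}. You claim that the stopping time $T$ and Corollary~\ref{cor:wl_bound} ``bound every piece of the ansatz for $u[T\wedge t_0]$ by $CN^2$ deterministically.'' This is false for the low-frequency band of $X$ that remains once $\hH_\Lambda X[T\wedge t_0]$ is declared the rough part. Whatever ansatz you use, the smooth part contains a band of $X$ running up to the \emph{random} cutoff $\Lambda$, and $\Lambda$ is unbounded with only polynomial tails, $\PP(\log_2\Lambda \ge j_0) \lesssim 2^{-6j_0}$ (see \eqref{e:Lambda_tail}). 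Before $T$ the bound $\|X\|_{\cC^{-\kappa}} \le \alpha_0$ gives $\|\lL_M X\| \lesssim \alpha_0 M^\kappa$ only for \emph{deterministic} $M$; applied with $M=\Lambda$ it requires $\EE\,\Lambda^{N\kappa} < \infty$, which fails as soon as $N\kappa \ge 6$. To close the argument one must split $\lL_\Lambda X = \lL_{C_*N\log N}X + \pP_{C_*N\log N,\,\Lambda}X$ and then estimate: the first piece via a Plancherel/H\"older expansion over the $\sim N\log N$ blocks together with Lemma~\ref{le:sup_Lp_concentration}, giving $(C\alpha_0 N^{3/2}\log N)^N$; and the second piece via the concentration bound \eqref{e:low_freq_condition}, giving $(C\alpha_0^2)^N$. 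Both are genuine $N$-th moment estimates, not pathwise bounds, and your proposal currently has no substitute for them. The same hole appears in the proof of \eqref{e:moment_short_time}: after declaring $\hH_\Lambda X[T\wedge a] + e^{(T\wedge a)\Delta}\ur$ the rough part, the remainder is $\lL_\Lambda X[T\wedge a] + Y + w$; $Y$ and $w$ are controlled pathwise through the stopping time, but $\lL_\Lambda X$ again needs the probabilistic split, so the bound is not ``only deterministic.'' (Incidentally, the paper's $6^N$ there arises from a $3^N$ H\"older factor on three pieces times the $2^N$ from $\|w\| \le 2\lambda_+$, not from six pieces.)
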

\begin{proof}
	Take a decomposition of $u[0] = \us + \ur$ with $\|\us\| = \lambda$ and $\ur$ satisfying the conditions in \eqref{e:N_Lyapunov}. 
	By Proposition~\ref{pr:N_stochastic_object} and the definition of $T$, there exists some random variable $\Lambda$ such that $ \hH_{\Lambda} X[T \wedge t_0]$ satisfies the conditions in \eqref{e:N_Lyapunov}, and $\lL_{\Lambda} X[T\wedge t_0]$ satisfies 
	\begin{equ}
		\EE \|\pP_{C_* N \log N, \Lambda} X[T\wedge t_0]\|^N \leq (C\alpha_0)^N \;.
	\end{equ} 
	Recalling our ansatz $u = \hH_{\lambda} \big(X + e^{t\Delta}\ur\big) + Y + \wh + \wl$,  H\"older's inequality yields
	\begin{align*}
		\|u - \hH_{\Lambda} X\|^{N} 
		&\leq (1 + \delta)\|\wl\|^{N} \\
		&+ (CN)^N\Big(\|\pP_{\lambda, \Lambda} X\|^N + \|\hH_{\lambda} e^{t\Delta} \ur\|^{N} + \|Y\|^{N} + \|\wh\|^N \Big)\;.
	\end{align*}
	By \eqref{e:moment_main_bound}, Corollary~\ref{cor:Lp_concentration} and our requirement for $\ur$, for $t_0 < 1$ and $\lambda \geq C_*N^2$ we have 
	\begin{equ}
		\EE \Big(e^{\frac{T\wedge t_0}{2}} \|\wl[T\wedge t_0]\|^{2N} - \|\wl[0]\|^{2N} \Big) \leq (\alpha_0^2 + (C\alpha_0)^{2N}) \lambda_+^{2N} + (CN)^{N}\;.
	\end{equ}
	Using the Cauchy--Schwarz inequality and Proposition~\ref{pr:stopping_time_lower_bound}, if $\alpha_0$ is small enough we get
	\begin{equ}
		\EE \|\wl[T\wedge t_0]\|^N \leq \rho\big(\lambda^2 + 1\big)^N +(CN)^{N/2}
	\end{equ}
	with some $0 < \rho < 1$.
	By Proposition~\ref{pr:N_stochastic_object} and the fact that $\lambda \geq C_* N^2$ we have 
	\begin{equ}
		\EE \|\pP_{\lambda, \Lambda} X[T \wedge t_0]\|^N \leq (C\alpha_0)^N \;.	
	\end{equ}
	By Proposition~\ref{pr:stopping_time_lower_bound} and the heat flow estimate~\eqref{e:heat_flow1} we have 
	\begin{equ}
		\EE \|\hH_{\lambda} e^{(T\wedge t_0) \Delta}\ur\|^N \leq C^N \EE (T\wedge t_0)^{-\kappa}\|\ur\|^N_{\Besov{-\kappa/N}{2, 2}} \leq (C\alpha_0)^N \EE (T\wedge t_0)^{-\kappa} \leq (C\alpha_0)^N\;.
	\end{equ}
	By definition of $T$ we have $\EE \|Y[T\wedge t_0]\|^N \leq \alpha_0^{2N}$. By Lemma~\ref{le:wh_bound} we get that
	\begin{equ}
		\EE \|\wh[T\wedge t_0]\|^N \leq \lambda_+^{\kappa N}\EE \| \hH_K \Tilde X[T\wedge t_0]\|_{\cC^{-\kappa}}^N \leq (C\alpha_0)^N\lambda_+^{\kappa N}\;.
	\end{equ}
	Combining the estimates above, we find that if $\lambda \geq C_* N^2$, the bound \eqref{e:moment_decay_1} holds. If $\lambda < C_* N^2$, we still define the rough part to be $\hH_{\Lambda} X[T \wedge t_0]$, but use the ansatz $u = X + e^{t\Delta} \ur + Y + w$. Then by H\"older inequality, the remainder has the bound
	\begin{equs}
		\|u - \hH_{\Lambda} X[T \wedge t_0]\|^N \leq C^N \Big(\|\pP_{C_* N \log N, \Lambda} X[T\wedge& t_0]\|^N + \|\lL_{C_* N\log N} X[T\wedge t_0]\|^N\\
		&+ \|e^{t\Delta} \ur \|^N + \|Y\|^N + \|w\|^N \Big)\;. 
	\end{equs}
	For the first line, the expectation of the first term is bounded by Proposition~\ref{pr:N_stochastic_object}. For the second term, we first use the Plancherel identity and H\"older's inequality to get 
	\begin{equs}
		\|\lL_{C_* N\log N} X[T\wedge t_0]\|^N & \leq C^N \Bigl(\sum_{j \leq C_* N \log N} \|\Delta_j X[T \wedge t_0]\|^2 \Bigr)^{N/2} \\
		&\leq (C N \log N)^{N/2} \sum_{j \leq C_* N \log N} \|\Delta_j X[T \wedge t_0]\|^N\;.
	\end{equs} 
	By Lemma~\ref{le:sup_Lp_concentration}, we have 
\begin{align*}
\sum_{j \leq C_* N \log N} \EE \Big(\sup_{0\leq t \leq 1} \|\Delta_j X[t]\|_{L^2}\Big)^{N} &\leq \sum_{j \leq C_* N \log N} \Big(\EE \Big(\sup_{0\leq t \leq 1} \|\Delta_j X[t]\|_{L^2}\Big)^{100N}\Big)^{\frac{1}{100}}\\
	&\leq (C\alpha_0)^N \sum_{j \leq C_* N \log N} 2^{j/50}  \big(1 + (50N)^N 2^{-jN/2}\big) \\
	&\leq (C\alpha_0 N)^N\;.
\end{align*}
In conclusion, we get 
\begin{equ}
	\EE \|\lL_{C_* N\log N} X[T\wedge t_0]\|^N \leq (C\alpha_0 N^{3/2} \log N)^N\;.
\end{equ}
For the second line, the bound for $\|e^{t\Delta} \ur \|^N + \|Y\|^N$ is the same as previously. Since now $\lambda < C_* N^2$, we have 
\begin{equ}
	\EE \|w[T\wedge t_0]\|^N \leq (C\lambda_+)^{N} \leq (CN)^{2N}\;.
\end{equ}
Combining the estimates above, the bound \eqref{e:moment_decay_1} is proved.

As for \eqref{e:moment_short_time}, we use the ansatz $u = X + e^{t\Delta} \ur + Y + w$. Taking the same $\Lambda$ as previously, we define the rough part to be $\hH_{\Lambda} X[T\wedge a] + e^{(T\wedge a)\Delta} \ur$, then by the triangle inequality and the definitions of $T$ and $\Lambda$, it satisfies the requirement for the rough part in \eqref{e:N_Lyapunov} for any $a > 0$. As for the low frequency part, by H\"older's inequality we have the bound 
\begin{equs}
	\,&\EE \|\lL_{\Lambda}X[T\wedge a] + Y[T\wedge a] + w[T\wedge a]\|^N \\
	&\leq 3^{N}\EE \left(\|\lL_{\Lambda}X[T\wedge a]\|^N + \|Y[T\wedge a]\|^N + \|w[T\wedge a]\|^N \right)\;.
\end{equs}
By the same argument as previously, it is bounded by $(CN)^{2N} + (6\lambda)^N$.
The result follows.
\end{proof}

Now we are ready to prove Theorem~\ref{thm:main_result}.
\begin{proof}[Proof of Theorem~\ref{thm:main_result}]
The bound \eqref{e:main_result1} follows from a combination of Propositions~\ref{pr:decay_1} and~\ref{pr:exponential_decay_general}. As for \eqref{e:main_result2}, applying Proposition~\ref{pr:exponential_decay_general} with
	\begin{equ}
		(V, \Tilde{V}) = \big((V^{(N)}_{\alpha_0})^N, (V^{(N)}_{2\alpha_0})^N\big)\;
	\end{equ}
	and combining it with Proposition~\ref{pr:moment_decay_1}, we get that there exists some $\gamma_N > 0$, such that
	\begin{equ}
		\EE (V_{2\alpha_0}^{(N)})^N \big(u[t]\big) \leq C^N e^{-\gamma_N t} \big(V^{(N)}_{2\alpha_0}\big)^N \big(u[0]\big) + (CN)^{2N}\;.
	\end{equ}
	Note that, from the definition \eqref{e:N_Lyapunov} of $V^{(N)}_{2\alpha_0}$, for any $x \in \cC^{-\kappa} \cap L^2$ and $N \geq 2$ we have 
	\begin{equ}
		V_{2\alpha_0}(x) \leq V^{(N)}_{2\alpha_0}(x) \leq \|x\| \;.
	\end{equ}
	The bound \eqref{e:main_result2} then follows.
\end{proof}

\section{Exponential Mixing}
\label{sec:exponential_mixing}
The uniqueness of the invariant measure is usually implied by the strong Feller property of the process and some support theorem. We first study these two properties and then prove an exponential mixing result by combining all the ingredients together. 


\subsection{Strong Feller property}
For strong Feller property of singular SPDEs, a very general result is given in \cite{HM18}. At first it does not cover the Navier--Stokes equation due to the nonlocality of the Leray projection, but later, \cite{ZZ17} filled this gap following the same strategy. 
Since our noise is a bit different than the space-time white noise considered in \cite{ZZ17}, and we are in 2D which will simplify the proof a lot, we choose to prove the strong Feller property again for completeness. For a function space $X$, we use $X_{\div}$ to denote the subspace of $X$ consisting of divergence free functions.
\begin{prop}	
\label{pr:Strong_Feller}
	The solution $u$ of \eqref{e:SNS} is a Markov process in $\cC^{-\kappa}_{\div}$ satisfying the strong Feller property. Moreover, the transition probability is continuous in the total variation norm.
\end{prop}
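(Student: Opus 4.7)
The plan is to adapt the noise-shift approach of \cite{HM18}, refined for 2D Navier--Stokes in \cite{ZZ17}, exploiting the non-degeneracy condition $\inf_{k\neq 0}|\phi_k| > 0$ which makes Girsanov shifts admissible. The Markov property in $\cC^{-\kappa}_{\div}$ is standard: local well-posedness for the remainder $v$ defined by \eqref{e:v_equ} via Da Prato--Debussche, combined with the uniform bound of Theorem~\ref{thm:main_result}, yields global solutions depending continuously on initial data and on the stochastic data $(X, \Wick{X^{\otimes 2}})$, and the flow property transfers to the Markov semigroup $P_t$.

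For the strong Feller property, I would fix $x, x' \in \cC^{-\kappa}_{\div}$ and a small $t_0 > 0$, and construct an adapted, $L^2_{\div}$-valued drift $h$ on $[0, t_0]$ such that the shifted process $\Tilde u$, starting at $x'$ and driven by $\xi + h$, coincides at time $t_0$ with $u^x$. Since deterministic smooth shifts of $\xi$ leave the Wick-renormalised stochastic objects invariant, matching the two processes reduces to matching their smooth remainders, which is a linear control problem for $h$: the terminal condition is determined by $e^{t_0\Delta}(x-x')$ together with a nonlinear correction obtained by linearising \eqref{e:equ_wl} around $u^x$. In 2D this linearised equation is subcritical (its dangerous transport term has coefficient $u^x \in \cC^{-\kappa}$), so provided the size of $u^x$ is controlled by $V_{2\alpha_0}$, the construction of $h$ can be carried out by solving backward in time from the zero terminal condition.

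Given such $h$, Girsanov's theorem produces a density $\rho$ under which $\Tilde u$ has the law of $u^{x'}$, whence
\begin{equ}
\|\pP_{t_0}(x, \cdot) - \pP_{t_0}(x', \cdot)\|_{TV} \leq \EE|1 - \rho|\;.
\end{equ}
By Pinsker's inequality this reduces to showing $\EE \int_0^{t_0} \|h_s\|^2\,ds \to 0$ as $x' \to x$ in $\cC^{-\kappa}$. The principal obstacle is this quantitative estimate: the linear part of $h$ contributes of order $\|x-x'\|_{\cC^{-\kappa}}^2$ as desired, but the nonlinear compensator involves $\grad v^x$, for which we only have $L^2_t$ control on random intervals. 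One would handle this by working up to the stopping time $T$ of \eqref{e:stopping_time}, which exceeds any fixed positive time with probability arbitrarily close to $1$ by Proposition~\ref{pr:stopping_time_lower_bound}, and then iterating via the strong Markov property (as in Section~\ref{sec:Iteration}) to propagate TV continuity from the short time $t_0$ to arbitrary $t > 0$.
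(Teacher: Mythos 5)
Your high-level strategy is the same as the paper's: both run a Girsanov shift/coupling argument to establish continuity of the transition kernel in total variation, and both use the $(X, \P\div(\Wick{X^{\otimes 2}}))$ model and the Da Prato--Debussche decomposition as the underlying structure. However, the paper does not rebuild the Girsanov coupling from scratch; it cites the abstract result \cite[Theorem~4.8]{HM18} and verifies Assumptions~1--5 there, reducing the whole proof to checking that the solution map $\Psi$ is continuous and Fréchet differentiable (Proposition~\ref{pr:solution_map_continuity}), that the shift map $\tau$ in \eqref{e:shift_map} is affine in $h$ and local in time, and that the Jacobian gains regularity ($\cC^{-\kappa}\to\cC^{2\kappa}$) so that Assumption~11 of \cite{HM18} holds with $E = L^p_{\div}$. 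Your approach essentially proposes to redo the internal argument of \cite[Theorem~4.8]{HM18}, which is considerably more work and, as written, skips the key technical subtlety.

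That subtlety is the point you must address and do not: the abstract framework of \cite{HM18} assumes a \emph{spatially local} nonlinearity (their Assumption~8), which Navier--Stokes violates because of the Leray projection $\P$. This is precisely why \cite{ZZ17} had to exist and why the paper includes its own proof rather than citing \cite{HM18} directly. The resolution is the observation that the Girsanov argument only needs locality \emph{in time} (so that a control $h$ supported on $[s,t]$ does not alter the flow on $[0,s]$), which is verified via a frequency-truncation approximation of the solution map. Any proof following your route must include this step, and without it the control construction has a genuine gap. A smaller imprecision: deterministic shifts do \emph{not} leave the Wick objects invariant; the model transforms according to $\tau$ as in \eqref{e:shift_map}, with the extra terms $2\P\div(\bar h\otimes_s \Xi_1) + \P\div(\bar h^{\otimes 2})$ — what is invariant is the renormalisation constant (in fact absent here), not the object itself. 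Finally, propagating TV continuity from $t_0$ to $t\ge t_0$ is a one-line consequence of the Markov property, since $\|\pP_t(x,\cdot)-\pP_t(x',\cdot)\|_{TV}\le\|\pP_{t_0}(x,\cdot)-\pP_{t_0}(x',\cdot)\|_{TV}$; you do not need to iterate over stopping times as in Section~\ref{sec:Iteration} for this part.
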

The rest of this subsection is to prove this proposition.
First we set up the framework as in \cite{HM18}. The solution of the SPDE \eqref{e:SNS} can be viewed as a random dynamical process on Banach space $\Bar{U} = \cC^{-\kappa}_{\div} \cup \{\infty\}$, where the state $\{\infty\}$ means blow-up of the solution. By Markov property it suffices to consider time up to 1. Let $\mM := C([0,1]; \cC^{-\kappa}_{\div})\times C([0,1]; \cC^{-2\kappa-1}_{\div})$ be the space of models. A general element in $\mM$ will be denoted by $\bPi$. We reformulate the Da Prato--Debussche trick as following to describe the Markov process.
\begin{prop}
\label{pr:solution_map_continuity}
	Let $\Psi_1: \Bar{U} \times \mM \rightarrow C([0,1]\,;\, \Bar{U})$ be the operator that maps $(u_0, (\Xi_1, \Xi_2))$ to the solution of 
	\begin{equ}
		\d_t u = \Laplace u + \P \div(u^{\otimes 2}) + 2\P \div(\Xi_1\otimes_s u) + \Xi_2\;, \quad u[0] = u_0\;.
	\end{equ}
	Then $\Psi_1$ is continuous and Fréchet differentiable. As a result, $\Psi(u_0, \Xi_1, \Xi_2) := \Psi_1(u_0, \Xi_1, \Xi_2) + \Xi_1$ is also continuous and Fréchet differentiable.
\end{prop}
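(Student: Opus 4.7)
The plan is to establish local well-posedness of the equation via a Picard iteration in a weighted-in-time space, from which both continuity and Fréchet differentiability follow by soft arguments. Since $u_0 \in \cC^{-\kappa}_{\div}$ is only a distribution, I would work with the Banach space $E_T$ of continuous maps $u : [0,T] \to \cC^{-\kappa}_{\div}$ with
\begin{equ}
	\|u\|_{E_T} := \sup_{t \in [0,T]} \|u[t]\|_{\cC^{-\kappa}} + \sup_{t \in (0,T]} t^{\eta/2}\, \|u[t]\|_{\cC^{\eta-\kappa}} < \infty
\end{equ}
for some fixed $\eta \in (3\kappa, 1/2)$. The singular weight at $t=0$ matches the smoothing one obtains by acting on $\cC^{-\kappa}$ with the heat semigroup (Lemma for \eqref{e:heat_flow2}), while $\eta - \kappa > 2\kappa$ ensures that all products appearing in the equation, namely $u^{\otimes 2}$ and $\Xi_1 \otimes_s u$, are classically defined via Lemma~\ref{le:paraproduct} (the total regularity is positive in every resonant estimate).

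The first step is to verify that the Duhamel map
\begin{equ}
	\Phi_{(u_0, \Xi_1, \Xi_2)}(u)[t] := e^{t\Delta} u_0 + \int_0^t e^{(t-s)\Delta}\Bigl(\P\div(u^{\otimes 2}) + 2\P\div(\Xi_1 \otimes_s u) + \Xi_2\Bigr)[s]\, ds
\end{equ}
sends $E_T$ into itself and is a contraction on a sufficiently small ball, for $T$ chosen small in terms of $\|u_0\|_{\cC^{-\kappa}}$ and $\|\Xi_1\|_{C_T \cC^{-\kappa}}$. Combining the heat-flow bound \eqref{e:heat_flow2} with the paraproduct estimates in Lemma~\ref{le:paraproduct} yields $\|\Phi(u)-\Phi(\tilde u)\|_{E_T} \lesssim T^{\delta}(\|u\|_{E_T}+\|\tilde u\|_{E_T} + \|\Xi_1\|_{C_T \cC^{-\kappa}})\|u-\tilde u\|_{E_T}$ for some $\delta > 0$, which gives contractivity. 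The contribution of $\Xi_2 \in C_T \cC^{-2\kappa - 1}$ is entirely linear in the data and harmless by Schauder. Local well-posedness then extends by the usual iteration up to a (possibly infinite) blow-up time $T_\star(u_0, \Xi_1, \Xi_2)$, and the endpoint $\{\infty\} \in \bar U$ absorbs trajectories beyond $T_\star$ in $C([0,1]; \bar U)$.

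Continuity of $\Psi_1$ follows from the standard observation that two solutions with nearby data live in a common ball in $E_T$ for some common $T>0$ (by upper semicontinuity of the blow-up time), and the contraction estimate above applied to the difference, giving locally Lipschitz dependence on $(u_0, \Xi_1, \Xi_2)$ on the set where the solution exists up to time $1$. For Fréchet differentiability, formally differentiating the equation at $(u_0, \Xi_1, \Xi_2)$ in the direction $(h, k_1, k_2)$ suggests that the derivative $v = D\Psi_1[h, k_1, k_2]$ should satisfy the linear equation
\begin{equ}
\label{e:plan-linearised}
	\d_t v = \Delta v + 2\P\div(u \otimes_s v) + 2\P\div\bigl(k_1 \otimes_s u + \Xi_1 \otimes_s v\bigr) + k_2\;, \quad v[0] = h\;,
\end{equ}
which is linear in $v$ with coefficients of the same regularity as those in the original equation, hence uniquely solvable in $E_T$ by the same contraction argument (indeed easier, since the problem is now linear) and linear in $(h, k_1, k_2)$. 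Letting $\tilde u$ denote the solution started from $(u_0 + h, \Xi_1 + k_1, \Xi_2 + k_2)$ and setting $r := \tilde u - u - v$, the difference $r$ solves a linear equation with zero initial data whose forcing is bilinear in the perturbation $(h, k_1, k_2)$ and in $\tilde u - u$. Applying the already-established linear estimates to $r$ then gives $\|r\|_{E_T} \lesssim \|(h, k_1, k_2)\|^2$, proving Fréchet differentiability with derivative given by the solution of \eqref{e:plan-linearised}. Since the map $\Xi_1 \mapsto \Xi_1$ is trivially continuous and linear (hence smooth), the corresponding statements for $\Psi = \Psi_1 + \Xi_1$ follow immediately.

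The main obstacle in carrying this out is not conceptual but bookkeeping: one must track carefully the time weights in $E_T$ and the exponents arising in the paraproduct and Schauder estimates, in order to verify that the $T^\delta$ gain is genuinely present and that the linearised problem \eqref{e:plan-linearised} fits in the same functional framework. All of this, however, is standard singular-SPDE technology, and the fact that the equation is essentially subcritical (with just the one rough input $\Xi_1 \in \cC^{-\kappa}$ to contend with) makes the bounds relatively painless.
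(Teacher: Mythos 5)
Your proposal is correct and coincides with the paper's own argument: the paper's proof consists of the single line ``It follows directly from the fixed point argument in \cite{DPD02}.'' You have simply unpacked the standard Da Prato--Debussche contraction argument in a time-weighted Besov space that this citation refers to, including the linearisation step needed for Fréchet differentiability, and your exponent bookkeeping (choice of $\eta\in(3\kappa,1/2)$, convergence of the Schauder integrals, linearity of the $\Xi_2$ contribution) checks out.
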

\begin{proof}
	It follows directly from the fixed point argument in \cite{DPD02}.
\end{proof}

\begin{proof}[Proof of Proposition~\ref{pr:Strong_Feller}]
The solution for \eqref{e:SNS} can then be represented as $\Psi\big(u_0, \bxi)$, where $$\bxi := \big(X, \P \,\div(\Wick{X^{\otimes2}})\big) \in \mM \;.$$
Also, we can denote the flow to be
\begin{equs}
	\bPhi: &\; [0, 1] \times [0, 1] \times \Bar{U} \times \mM \rightarrow \Bar{U}\;,\\
	\bPhi: &\; (s, t, u, \bPi)\mapsto \Phi_{s, t}(u, \bPi) := \Psi(u, \bPi\vert_{[s, t]})[t]\;.
\end{equs}
Here $\Psi(u, \bPi\vert_{[s, t]})[t]$ means the value at time $t$ of the SPDE starting at time $s$ with data $u$.
To prove Proposition~\ref{pr:solution_map_continuity}, it suffices to verify Assumptions 1-5 in \cite{HM18}.
Assumption 1 is then verified by Proposition~\ref{pr:solution_map_continuity}. Now let 
\begin{equ}
	r: (t, u_0, \bPi) \rightarrow \|\Psi(u_0, \bPi)\|_{\cC([0, t];\, \cC^{-\kappa})}\;,
\end{equ}
then $r$ satisfies the Assumption 2 of \cite{HM18}. The verification also follows from the standard fixed point argument of the local well-posedness theory.
Then we wish to perturb the noise $\xi$ by a deterministic function $h$ in the Cameron--Martin space. The Cameron--Martin space $\hH$ for $\xi$ is
\begin{equ}
	\hH := \Bigl\{h = \sum_{k \neq 0} \hat{h}(k, t)e_k : \sum_{k\neq 0}\int_{0}^{1} |\phi_k|^2|\hat{h}(k, t)|^2 \md t < \infty \Bigr\}\;.
\end{equ}
In particular, if $\xi$ satisfies Assumptions \eqref{as:noise} and \eqref{e:noise_irreducible}, then $\hH = L^2_{\div}([0, 1]\times \T^2)$. 
Define $E_s = L^p_{\div}([0, s]\times \T^2)$ and $E = E_1 \subset \hH$, with some $2<p<\infty$ sufficiently large. Also define the shift map $\tau: E\times \mM \rightarrow \mM$ to be 
\begin{equ}
\label{e:shift_map}
	\tau: \big(h, \big(\Xi_1, \Xi_2 \big)\big) \rightarrow \big(h + \Xi_1, \Xi_2 + 2 \P \div(\Bar{h} \otimes_s \Xi_1) + \P \div(\Bar{h}^{\otimes 2})\big)\;,
\end{equ}
where $\Bar{h} := (\d_t - \Laplace)^{-1}h$\,. 
Here we use the smaller space $E$ instead of $\hH$ in order to satisfy Assumption 11 in \cite{HM18}, which will be discussed later. For any $h\in E$ we have
\begin{equ}
	\tau(h, \bxi(\omega)) =  \big(h + X, \P\, \div(\Wick{(X+h)^{\otimes 2}}) \big) = \bxi(\omega + h) \quad a.s.
\end{equ}
Then we verify that this $\tau$ satisfies Assumptions 3 and 4 in \cite{HM18}. For Assumption 3, we still need to verify that if $h[r] = 0$ for any $r \in [s, t]$, then 
\begin{equ}
	\Phi_{s, t}(u, \tau(h, \bxi)) = \Phi_{s, t} (u, \bxi)\;,\; \forall u \in U \;.
\end{equ}
It is this property that requires the Assumption 8 for SPDE in \cite{HM18}, which is not fulfilled by Navier--Stokes because of the presence of the Leray projection. However, the Leray projection only causes non-locality in space, while here the requirement is the locality in time, so actually there are no conflicts. Note that $\Phi_{s, t}(u, \bxi)$ is the $\eps \rightarrow 0$ limit of $v_{\eps}$ at time $t$, which satisfies  
\begin{equ}
	\d_t v_{\eps} = \Laplace v_{\eps} + \P \div(v_{\eps}^{\otimes 2}) + \lL_{\eps^{-1}}\xi\;, \quad v_{\eps}[s] = u\;.
\end{equ}
With the same reasoning we have $\Phi_{s, t}(u, \tau(h, \bxi)) = \lim_{\eps \rightarrow 0} v_{\eps}^h[t]$, where $v_{\eps}^h$ is the solution of 
\begin{equ}
\label{e:tilted_equ}
	\d_t v_{\eps}^h = \Laplace v_{\eps}^h + \P \div((v_{\eps}^h)^{\otimes 2}) + \lL_{\eps^{-1}}\xi + h\;, \quad v_{\eps}^h[s] = u\;.
\end{equ}
Since $h[r] = 0$ for any $r \in [s, t]$, we have $v_{\eps}[t] = v_{\eps}^h[t]$. Let $\eps \rightarrow 0$ verifies Assumption~3 in \cite{HM18}. Note that here we take advantage of the fact that the renormalisation of $\bxi$ does not cause renormalisation at the level of the equation, which makes things much easier. 
Regarding Assumption 4, it directly follows from the Fréchet differentiability of $\Psi$ and the form of $\tau$ in \eqref{e:shift_map}. Finally for Assumption 5, denote $J_{s, t}$ for the Fréchet derivative of $\Phi_{s, t}$ in its first variable at the point $\Phi_{0, s}(u, \bPi)$, so $J_{s, t}$ is a bounded operator from $\cC^{-\kappa}$ to $\cC^{2\kappa}$. Choosing $p$ large enough such that $L^p \subset \cC^{-\kappa}$, the Assumption 11 in \cite{HM18} is satisfied and the rest of the proof is the same as Theorem~4.8 there.
\end{proof}

\subsection{Support theorem}
For the support theorem, it also can be easily proved by hand since actually the renormalisation does not change the equation. First we have the following support theorem for the lifted noise $\bxi$. 
\begin{prop}
\label{pr:noise_support}
	For $\bxi$ and $\mM$ as above, one has
	\begin{equ}
		\supp \big(\bxi\big) = \overline{\Bigl\{(f,\P \div(f^{\otimes 2})): f \text{ is smooth}, \div(f) = \int_{\T^2} f(x) \, \md x = 0\Bigr\}}	\;,
	\end{equ}
where the closure is taken in $\mM$.
\end{prop}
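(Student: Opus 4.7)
The plan is to prove the two inclusions separately. For $\supp(\bxi) \subseteq \overline{\{\ldots\}}$, I use the spectral truncations $X_N := \lL_N X$, which are smooth in space, divergence free, and mean-zero. Since $X_N$ is stationary in space, $\EE X_N^{\otimes 2}$ is a constant matrix, hence $\P\div(\Wick{X_N^{\otimes 2}}) = \P\div(X_N^{\otimes 2})$, so the pair $(X_N, \P\div(X_N^{\otimes 2}))$ belongs to the set inside the closure. By Proposition~\ref{pr:wick_renormalization} this sequence converges in probability to $\bxi$ in $\mM$, and extracting an almost-surely convergent subsequence gives $\bxi \in \overline{\{\ldots\}}$ almost surely, which implies the desired inclusion since the closure is closed.

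For the reverse inclusion, I use a Cameron--Martin shift argument. For any $h$ in the Cameron--Martin space $\hH$ of $\xi$, the shift $\omega \mapsto \omega + h$ yields a measure equivalent to $\PP$, and intertwines with $\bxi$ via $\bxi(\omega + h) = \tau(h, \bxi(\omega))$ with $\tau$ as in \eqref{e:shift_map}. Since $\tau(h, \cdot): \mM \to \mM$ is a homeomorphism, this yields $\supp(\bxi) = \tau(h, \supp(\bxi))$ for every $h \in \hH$. Granted the subclaim $(0, 0) \in \supp(\bxi)$, we therefore obtain $(\bar h, \P\div(\bar h^{\otimes 2})) \in \supp(\bxi)$ for every $h \in \hH$, where $\bar h := (\d_t - \Laplace)^{-1} h$. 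Every smooth divergence-free, mean-zero space-time $f$ vanishing at $t = 0$ has the form $\bar h$ with $h = (\d_t - \Laplace) f \in \hH$, and a general smooth $f$ is reached in the closure by approximating with $\chi(t) f$ for a smooth cutoff $\chi$ satisfying $\chi(0) = 0$ and $\chi \equiv 1$ on $[\delta, 1]$.

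The main obstacle, and the only substantial step, is the subclaim $(0, 0) \in \supp(\bxi)$: unlike a centered Gaussian vector, whose origin is automatically in the support, the second component $\P\div(\Wick{X^{\otimes 2}})$ lies in the second Wiener chaos and depends nontrivially on $X$. I would prove it by a high--low frequency split in which the two summands are independent Gaussians. Writing $X^\lL := \lL_N X$ and $X^\hH := \hH_N X$, we have the chaos identity
\begin{equ}
\Wick{X^{\otimes 2}} = \Wick{(X^\lL)^{\otimes 2}} + 2\, X^\lL \otimes_s X^\hH + \Wick{(X^\hH)^{\otimes 2}}\;.
\end{equ}
Standard second-chaos bounds give $\EE\|X^\hH\|_{C_t\cC^{-\kappa}}^2 + \EE\|\P\div(\Wick{(X^\hH)^{\otimes 2}})\|_{C_t\cC^{-2\kappa-1}}^2 \to 0$ as $N \to \infty$, so for $N$ chosen large enough the event $B_N$ that both norms are below $\eps/2$ has probability at least $1/2$. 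For this fixed $N$, the event $A_N$ that $\|X^\lL\|_{L^\infty_{t,x}}$ and $\|\P\div(\Wick{(X^\lL)^{\otimes 2}})\|_{C_t\cC^{-2\kappa-1}}$ are both below $\delta$ has positive probability by finite-dimensional Gaussian small ball; the parameter $\delta$ is chosen so that, jointly with $B_N$, the cross term satisfies $\|\P\div(X^\lL \otimes_s X^\hH)\|_{C_t\cC^{-2\kappa-1}} \lesssim \delta \cdot \eps$. Independence of $X^\lL$ and $X^\hH$ makes $A_N$ and $B_N$ independent, so their intersection has positive probability and makes $\|\bxi\|_\mM$ arbitrarily small, establishing the subclaim.
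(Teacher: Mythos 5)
Your first inclusion (spectral truncations $X_N$, whose Wick renormalisation is trivial under $\P\div$, combined with Proposition~\ref{pr:wick_renormalization}) matches the paper's argument verbatim. The reverse inclusion takes a genuinely different route. The paper applies Girsanov directly to the truncated fields, shifting $X_N$ by $f_N = P_{\leq N}f$ and letting $N\to\infty$, which lower-bounds the probability of the $2\delta$-ball around $(f,\P\div(f^{\otimes 2}))$ by an expectation of the Girsanov density against a ball indicator centred at the origin. You instead factor the argument into two pieces: (i) $\tau(h,\cdot)$ is a homeomorphism of $\mM$ that, via Cameron--Martin, preserves $\supp(\bxi)$; (ii) $(0,0)\in\supp(\bxi)$. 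These are two organisations of the same shift, but yours has a concrete advantage: the paper asserts without proof that its final expectation is strictly positive, which, since the Girsanov density is a.s.\ positive, requires exactly the small-ball statement you isolate as (ii), and the paper never establishes it. Your high--low split argument for (ii) --- independence of $X^{\lL}$ and $X^{\hH}$, Chebyshev on the high block, small-ball for the process built from finitely many Fourier modes, paraproduct control of the cross term --- is correct and nontrivial, and it makes explicit a step the paper's write-up leaves implicit.

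One point shared by both proofs: since $X[0]=0$ a.s., every element of $\supp(\bxi)$ has first component vanishing at $t=0$, so the set $A$ must implicitly carry the constraint $f(0)=0$. Your approximation of a general smooth $f$ by $\chi(t)f$ does not converge in the $C([0,1];\,\cdot\,)$ topology of $\mM$ when $f(0)\neq 0$, so that sentence does not actually close the argument; but the paper glosses over the same issue (its Girsanov shift gives $X_N\mapsto X_N+\bar g_N$ with $\bar g_N(0)=0$, which equals $X_N+f_N$ only when $f_N(0)=0$). This is a statement-level imprecision you inherited rather than introduced, and it is harmless for the downstream controllability argument.
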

\begin{proof}
	Denote the right-hand side by $\Bar{A}$\,. First we show that $A \subset \supp\big(\bxi\big)$. Let $f_N := P_{\leq N}f$ be the space Fourier truncation for $f$ up to frequency $N$ and $g_N = (\d_t-\Laplace)f_N$\,. Then by Girsanov theorem, for every $|k|\leq N$, $B_k(t) - \int_0^t \frac{\hat{g}_N(k,t)}{\phi_k}dt$ has the same law as $B_k(t)$ under measure 
	\begin{equ}
		\md \PP^g_N := \exp \Bigg(\sum_{k\neq 0} \Big(\int_0^1 \hat{g}_N(k, t)\phi^{-1}_k \md B_k(t) - \frac{1}{2}\int_0^1 \hat{g}_N^2(k, t)\phi^{-2}_k \md t \Big) \Bigg) \md \PP\;.
	\end{equ}
	Here we used the assumption \eqref{e:noise_irreducible} so that $\phi_k \neq 0$. This means that $\xi_N - g_N$ under $\PP_N^g$ is the same as $\xi_N$ under $\PP$, yielding that $X_N + f_N$ under $\PP_N^g$ is the same as $X_N$ under $\PP$.
Thus,
	\begin{equs}
		\, &\PP\left(\|\big(X_N,\P \div(X_N^{\otimes2})\big)-\big(f,\P\div(f^{\otimes2})\big)\|_{\mM} < \delta\right)\\
		 &= \PP^g_N\left(\|\big(X_N+f,\P\div\big((X_N+f)^{\otimes2}\big)\big)-\big(f,\P\div(f^{\otimes2})\big)\|_{\mM}<\delta \right) \\
		 \label{e:tilted_prob}
		 &= \PP_N^g\left(\|\big(X_N, \P\div(X_N^{\otimes 2} + 2\P\div(X_N\otimes_s f))\big)\|_{\mM} < \delta\right).
	\end{equs}
	It follows from Proposition~\ref{pr:wick_renormalization} that for any small $\eps$, if $N$ is large enough we get 
	\begin{equ}
	\PP\left(\|\big(X_N, \P\div(X_N^{\otimes2})\big)-\big(X, \P\div(\Wick{X^{\otimes2}})\big)\|_{\mM} \geq \delta \right) < \eps\;,
		\end{equ}
so that 
	\begin{equs}
		\label{e:ball_prob_lower_bound_1}
		\,&\PP\left(\|\big(X,\P\div(\Wick{X^{\otimes2}})\big)-\big(f,\P\div(f^{\otimes2})\big)\|_{\mM} < 2\delta\right) \\		
		&\geq \PP\left(\|\big(X_N,\P\div(X_N^{\otimes2})\big)-\big(f,\P\div(f^{\otimes2})\big)\|_{\mM} < \delta\right) - \eps \\
		&=\EE\left(\frac{\md\PP_N^g}{\md \PP} \boldsymbol{1}_{\|(X_N, \P\div(X_N^{\otimes 2} + 2\P\div(X_N\otimes_s f)))\|_{\mM} < \delta}\right) - \eps\;.
	\end{equs}
	Since $g$ is smooth, we have
	\begin{equ}
		\frac{\md \PP_N^g}{\md \PP} \rightarrow \frac{\md \PP^g}{\md \PP}:= \exp\left(\sum_{k\neq 0}\int_0^1 \hat{g}(k, t)\phi^{-1}_k \md B_k(t) - \frac{1}{2}\int_0^1 \hat{g}^2(k, t)\phi^{-2}_k \md t \right)\;.
	\end{equ}
	Let $N\rightarrow \infty$ and $\eps \rightarrow 0$. The dominated convergence theorem and Proposition~\ref{pr:wick_renormalization} yield for any $\delta > 0$
	\begin{equs}
		\PP&\left(\|\big(X,\P\div(\Wick{X^{\otimes2}})\big)-\big(f,\P\div(f^{\otimes2})\big)\|_{\mM} < 2\delta\right) \\ 
		&\geq \EE\left( \frac{\md \PP^g}{\md \PP} \boldsymbol{1}_{\|(X, \P\div(\Wick{X^{\otimes 2}}) + 2\P\div(X\otimes_s f))\|_{\mM} < \delta} \right) > 0\;,
	\end{equs}
	so that $A \subset \supp \big(\bxi \big)$.
	The converse inclusion follows immediately from Proposition~\ref{pr:wick_renormalization}.
\end{proof}
The following control result from \cite{CF96} is also needed.
\begin{prop}
\label{pr:control}
	For any $u_0, u_1 \in \cC^{-\kappa}_{\div}$ and any small $\delta>0$, there exists some smooth function $f$, such that $u := \Psi(u_0, f, \P\div(f^{\otimes 2}))$ satisfies $\|u[1]-u_1\|_{\cC^{-\kappa}} < \delta$\,.
\end{prop}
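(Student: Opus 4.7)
The plan is to reduce, by density and by the continuity of the solution map $\Psi$, to the case of smooth endpoints, and then to solve the smooth controllability problem by hand. As a first step I would use the density of smooth mean-free divergence-free fields in $\cC^{-\kappa}_{\div}$ (e.g.\ via Littlewood--Paley truncation) to pick $u_0^*, u_1^* \in C^\infty_{\div}(\T^2;\R^2)$ with $\|u_1 - u_1^*\|_{\cC^{-\kappa}} < \delta/2$, leaving $\|u_0 - u_0^*\|_{\cC^{-\kappa}}$ to be tuned in the last step.

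For smooth endpoints I would exhibit a controlled trajectory by hand. Fix a smooth cutoff $\chi \in C^\infty([0,1];[0,1])$ with $\chi(0)=1$ and $\chi(1)=0$, and set
\begin{equ}
\tilde u(t) := \chi(t)\,u_0^* + (1-\chi(t))\,u_1^*\;,
\end{equ}
which is smooth, mean-free, and divergence-free at every $t$, with $\tilde u(0)=u_0^*$ and $\tilde u(1)=u_1^*$. Define the (smooth, mean-free, divergence-free) drift
\begin{equ}
g := (\d_t - \Laplace)\tilde u - \P\div(\tilde u^{\otimes 2})\;,
\end{equ}
and let $f := (\d_t - \Laplace)^{-1} g$ with $f(0)=0$, so that $f$ is smooth, mean-free and divergence-free. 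A direct computation using $(v+f)^{\otimes 2} = v^{\otimes 2} + 2 v\otimes_s f + f^{\otimes 2}$ shows that $v := \tilde u - f$ solves the equation characterising $\Psi_1(u_0^*, f, \P\div(f^{\otimes 2}))$ with initial datum $v(0)=u_0^*$. By uniqueness of classical solutions for smooth data this yields $\Psi(u_0^*, f, \P\div(f^{\otimes 2})) = v + f = \tilde u$, and in particular $\Psi(u_0^*, f, \P\div(f^{\otimes 2}))[1] = u_1^*$.

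Finally I would pass from $u_0^*$ to the rough $u_0$ using continuity. By Proposition~\ref{pr:solution_map_continuity}, the map $u_0' \mapsto \Psi(u_0', f, \P\div(f^{\otimes 2}))$ is continuous from $\Bar U$ to $C([0,1]; \Bar U)$; at $u_0'=u_0^*$ it takes the value $\tilde u \in C([0,1]; \cC^{-\kappa}_{\div})$, so in particular no blow-up occurs along this trajectory. Shrinking the approximation error $\|u_0 - u_0^*\|_{\cC^{-\kappa}}$ in Step~1, this continuity forces
\begin{equ}
\|\Psi(u_0, f, \P\div(f^{\otimes 2}))[1] - u_1^*\|_{\cC^{-\kappa}} < \delta/2\;,
\end{equ}
and combining with the triangle inequality and $\|u_1 - u_1^*\|_{\cC^{-\kappa}} < \delta/2$ yields the claim. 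The main (mild) obstacle here is ensuring that the perturbed trajectory does not escape to $\infty$ before time $1$ for $u_0$ only close to $u_0^*$ in the rough topology; this is precisely what continuity of $\Psi$ at the blow-up-free base point $(u_0^*, f, \P\div(f^{\otimes 2}))$ provides.
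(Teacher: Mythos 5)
Your proof is correct, and it takes a genuinely more elementary route than the paper for the controllability step. The paper cites the Coron--Fursikov exact controllability result~\cite{CF96}, which concerns controls \emph{localized} on a subset of the torus; here, however, the control is fully distributed (any smooth mean-free divergence-free forcing $g=(\d_t-\Laplace)f$ is admissible), so invoking~\cite{CF96} is overkill. Your observation that one can simply interpolate $\tilde u(t)=\chi(t)u_0^*+(1-\chi(t))u_1^*$, read off the required forcing $g$, and set $f=(\d_t-\Laplace)^{-1}g$ with $f[0]=0$, gives the exact controllability between smooth endpoints by a one-line computation, and makes the proof self-contained. Both approaches then finish identically: truncate $u_0,u_1$ to smooth $u_0^*,u_1^*$ and use the Fr\'echet differentiability of $\Psi$ from Proposition~\ref{pr:solution_map_continuity} to transfer the exact control of $u_0^*$ to approximate control of $u_0$.

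Two small points worth flagging. First, mind the sign convention in the definition of $g$: with the nonlinearity written as $-\P\div(u^{\otimes 2})$ in \eqref{e:SNS}, one should take $g=(\d_t-\Laplace)\tilde u+\P\div(\tilde u^{\otimes 2})$; you have inherited the sign used in the statement of Proposition~\ref{pr:solution_map_continuity}, which is internally consistent but differs from \eqref{e:v_equ}. Second, both your argument and the paper's are slightly informal in the final ``shrink $\|u_0-u_0^*\|$'' step: as $u_0^*$ varies, the control $f$ varies with it, so strictly speaking one needs the continuity modulus of $u_0'\mapsto\Psi(u_0',f,\cdot)[1]$ to be locally uniform over the family of controls considered (which holds, since the $f$'s stay bounded in the relevant topology and the solutions do not approach blow-up). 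This is not a gap in your reasoning specifically, since the paper's phrasing has exactly the same implicit appeal; but if you were to write this up carefully you would want to make the uniformity explicit, e.g.\ by fixing $u_0^*$ once with $\|u_0-u_0^*\|_{\cC^{-\kappa}}$ small in terms of $\delta$ and the $\cC^{-\kappa}$-size of $u_0,u_1^*$, and then checking that the continuity radius furnished by Proposition~\ref{pr:solution_map_continuity} at $(u_0^*,f,\P\div(f^{\otimes2}))$ does not degenerate.
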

\begin{proof}
	In \cite{CF96} the exact controllability for smooth $u_0$ and $u_1$ is obtained, from which we directly get the approximate controllability for rougher functions $u_0, u_1$ as stated in this result. Since for any $u_0$ and $u_1$ in $\cC^{-\kappa}_{\div}$, we can always choose divergence free smooth functions $\Tilde{u}_0$ and $\Tilde{u_1}$ that are arbitrarily close to $u_0$ and $u_1$ in the $\cC^{-\kappa}$ norm respectively. Then by the exact controllability result, there exists some smooth function $f$, such that $\Psi(\Tilde{u}_0, f, \P \div(f^{\otimes 2}))[1] = \Tilde{u}_1$. For any $\delta > 0$, by Proposition~\ref{pr:solution_map_continuity} we can always choose $\Tilde{u}$ and $u$ close enough such that 
	\begin{equ}
		\| \Psi \big(\Tilde{u}_0, f, \P \div(f^{\otimes 2})\big)[1] - \Psi \big(u_0, f, \P \div(f^{\otimes 2})\big)[1] \|_{\cC^{-\kappa}} < \delta \;.
	\end{equ}
	The result then follows by using the triangle inequality.
\end{proof}
Now we are ready to prove the following support theorem for SPDE \eqref{e:SNS}.

\begin{prop}
	For every $u_0, u_1\in \cC^{-\kappa}_{\div}$ and $\delta > 0$, we have 
	\begin{equ}
	\label{e:accessible}
		\PP\left(\|\Psi\big(u_0, X, \P \div(\Wick{X^{\otimes 2}})\big)[1]-u_1\| < 2\delta \right) > 0\;.
	\end{equ}
\end{prop}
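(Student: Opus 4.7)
The strategy is to combine the three ingredients already assembled: the continuity of the solution map (Proposition~\ref{pr:solution_map_continuity}), the approximate controllability result (Proposition~\ref{pr:control}), and the support characterisation for the lifted noise (Proposition~\ref{pr:noise_support}). No new stochastic input should be needed.

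First I would apply Proposition~\ref{pr:control} with tolerance $\delta$ to obtain a smooth, mean-free, divergence-free function $f$ such that the deterministic solution
\begin{equ}
	u^f := \Psi\bigl(u_0, f, \P\div(f^{\otimes 2})\bigr)
\end{equ}
satisfies $\|u^f[1] - u_1\|_{\cC^{-\kappa}} < \delta$. This reduces the problem to showing that the random solution $\Psi(u_0, \bxi)[1]$ is close in $\cC^{-\kappa}$ to the deterministic solution $\Psi(u_0, f, \P\div(f^{\otimes 2}))[1]$ with positive probability.

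Next I would use Proposition~\ref{pr:solution_map_continuity}: the map $\Psi: \Bar U \times \mM \to C([0,1]; \Bar U)$ is continuous, hence the evaluation at $t=1$, composed with $\bPi \mapsto \Psi(u_0, \bPi)[1]$, is continuous on $\mM$ at the point $\bPi_f := (f, \P\div(f^{\otimes 2}))$. Therefore there exists $\eta = \eta(\delta, f, u_0) > 0$ such that whenever $\bPi \in \mM$ satisfies $\|\bPi - \bPi_f\|_{\mM} < \eta$, one has
\begin{equ}
	\bigl\|\Psi(u_0, \bPi)[1] - u^f[1]\bigr\|_{\cC^{-\kappa}} < \delta\,.
\end{equ}
Combining this with the triangle inequality and the choice of $f$ gives $\|\Psi(u_0, \bPi)[1] - u_1\|_{\cC^{-\kappa}} < 2\delta$ for any such $\bPi$.

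Finally, I would invoke Proposition~\ref{pr:noise_support}: since $\bPi_f = (f, \P\div(f^{\otimes 2}))$ lies in the closure on the right-hand side there, it belongs to $\supp(\bxi)$, i.e.\ every open neighbourhood of $\bPi_f$ in $\mM$ has strictly positive probability under the law of $\bxi$. Applying this to the $\eta$-ball around $\bPi_f$ yields
\begin{equ}
	\PP\bigl(\|\bxi - \bPi_f\|_{\mM} < \eta\bigr) > 0\,,
\end{equ}
and on this event $\|\Psi(u_0, \bxi)[1] - u_1\|_{\cC^{-\kappa}} < 2\delta$ by the previous step, which establishes \eqref{e:accessible}. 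No step is a genuine obstacle: the only subtlety is to make sure that the modulus of continuity $\eta$ furnished by Proposition~\ref{pr:solution_map_continuity} is uniform enough near the deterministic point $\bPi_f$, but this follows directly from the continuity (no quantitative bound required, since we only need positivity).
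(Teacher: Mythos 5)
Your proof is correct and follows essentially the same route as the paper's: approximate controllability (Proposition~\ref{pr:control}) to pick a smooth $f$, continuity of the solution map (Proposition~\ref{pr:solution_map_continuity}) to find a small ball around $\bigl(f,\P\div(f^{\otimes 2})\bigr)$ mapped into the $2\delta$-ball around $u_1$, and the support theorem for the lifted noise (Proposition~\ref{pr:noise_support}) to give that ball positive probability. The only difference is that you spell out the triangle-inequality step explicitly; the paper compresses it.
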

\begin{proof}
	First, we find some smooth function $f$ satisfying Proposition~\ref{pr:control}. Then by Proposition~\ref{pr:solution_map_continuity}, there exists some $\delta' > 0$, such that for every $(\Xi_1,\Xi_2) \in B\big((f,\div(f^{\otimes 2})),\delta' \big)$, we have $\|\Psi(u_0, \Xi_1, \Xi_2)[1]-u_1\| < \delta$. By Proposition~\ref{pr:noise_support}, such a ball has positive probability if $(\Xi_1, \Xi_2)$ is distributed as $\big(X, \P \div(\Wick{X^{\otimes 2}})\big. )$, which gives the result. 
\end{proof}

\subsection{Conclusion}
Now we start to prove Theorem~\ref{thm:exp_mixing}, using \cite[Theorem~3.6]{Hai10}. An important ingredient is the following proposition.
\begin{prop}
\label{pr:small_set}
	Fix a large time $t_*$\,. For any $K >0$\,, there exists some $\delta$ such that for any $u_0, \Tilde{u}_0 \in \cC^{-\kappa}$ such that $V_{2\alpha_0}(u_0), V_{2\alpha_0}(\Tilde{u}_0) \leq K$, we have
	\begin{equ}
		\|\pP_{t_*}(u_0, \cdot) - \pP_{t_*}(\Tilde{u}_0, \cdot)\|_{TV} \leq 1-\delta\;.
	\end{equ}
\end{prop}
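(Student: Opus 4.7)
The plan is to combine three ingredients already at our disposal: the Lyapunov bound \eqref{e:main_result1}, the total-variation continuity of the transition kernel (Proposition~\ref{pr:Strong_Feller}), and the topological irreducibility provided by \eqref{e:accessible}. I would write $t_* = t_0 + 1 + s$ with $s \in (0,1)$ small and $t_0 > 0$ to be chosen.

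First, by \eqref{e:main_result1} and Markov's inequality, there exists $M = M(K, t_*)$ such that, uniformly over $u_0$ with $V_{2\alpha_0}(u_0) \leq K$,
\begin{equ}
\pP_{t_0}(u_0, A_M) \geq \tfrac{3}{4}\;, \qquad A_M := \{y \in \cC^{-\kappa}_{\div} : V_{2\alpha_0}(y) \leq M\}\;,
\end{equ}
and the same bound holds with $u_0$ replaced by $\Tilde{u}_0$. The core of the argument is then to produce a common minorant for $\pP_{1+s}(y, \cdot)$ as $y$ varies over $A_M$. Fixing a smooth reference point $v_* \in \cC^{-\kappa}_{\div}$ (say $v_* = 0$) and an open ball $B \subset \cC^{-\kappa}$ centred at $v_*$, I would aim for the uniform lower bound
\begin{equ}
\pP_1(y, B) \geq \eta \qquad \text{for every } y \in A_M\;,
\end{equ}
with $\eta > 0$ depending only on $M$, hence only on $K$ and $t_*$.

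The support theorem \eqref{e:accessible} guarantees pointwise positivity, and $y \mapsto \pP_1(y, B)$ is lower semi-continuous by TV-continuity (Proposition~\ref{pr:Strong_Feller}). The obstacle is that $A_M$ is bounded but not compact in $\cC^{-\kappa}$: it contains the full $\cC^{-\kappa}$-ball of radius $2\alpha_0$ around every point of the $L^2$-ball of radius $M$, which is itself compact in $\cC^{-\kappa}$ by Rellich. I would address this by applying the Markov property at a short intermediate time $\tau > 0$: the heat flow smoothes any element of the $\cC^{-\kappa}$-ball of radius $2\alpha_0$ into a uniformly bounded set of $\cC^{\eta'}$ for any $\eta' > 0$, and the stochastic objects $\Tilde X[\tau], Y[\tau]$ are tight by Proposition~\ref{pr:wick_renormalization}. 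This should yield tightness of $\pP_\tau(y, \cdot)$ on a compact $\kK \subset \cC^{-\kappa}$, uniformly over $y \in A_M$. A finite subcover of $\kK$ by neighbourhoods on which $\pP_{1 - \tau}(\cdot, B)$ is bounded below (using TV-continuity together with \eqref{e:accessible}) then produces the uniform $\eta$.

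Once this is secured, shrink $B$ so that $\|\pP_s(z, \cdot) - \pP_s(v_*, \cdot)\|_{TV} \leq \tfrac{1}{2}$ for all $z \in B$ (possible by TV-continuity) and set $\nu := \pP_s(v_*, \cdot)$. Disintegrating gives
\begin{equ}
\pP_{1+s}(y, \cdot) = \int \pP_s(z, \cdot)\, \pP_1(y, d z) \geq \tfrac{\eta}{2}\nu \qquad (y \in A_M)\;,
\end{equ}
and Chapman--Kolmogorov yields
\begin{equ}
\pP_{t_*}(u_0, \cdot) \geq \int_{A_M} \pP_{1+s}(y, \cdot)\, \pP_{t_0}(u_0, d y) \geq \tfrac{3\eta}{8}\nu\;,
\end{equ}
with the same bound for $\Tilde{u}_0$. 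Hence $\pP_{t_*}(u_0, \cdot) \wedge \pP_{t_*}(\Tilde{u}_0, \cdot) \geq \tfrac{3\eta}{8}\nu$, giving $\|\pP_{t_*}(u_0, \cdot) - \pP_{t_*}(\Tilde{u}_0, \cdot)\|_{TV} \leq 1 - \tfrac{3\eta}{8}$ and the claim with $\delta = 3\eta/8$. The principal technical difficulty is establishing the uniform tightness on $A_M$ needed for the uniform lower bound on $\pP_1(y, B)$.
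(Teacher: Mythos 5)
Your overall architecture matches the paper's: first concentrate on a compact set using the Lyapunov bound, then combine TV-continuity of the semigroup (strong Feller) with the support theorem to reach a reference ball $B$ with probability bounded below uniformly on that compact, and finally conclude by a minorization/coupling argument. The paper phrases the last step as a two-copy coupling (independent noises to time $t_*-1$, then common noise) rather than a Doeblin minorant, but that is an inessential difference of presentation.

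The genuine gap is exactly where you flagged it, and it is not a technicality that one can wave away. Your route from $A_M$ to a compact set in $\cC^{-\kappa}$ handles only the rough $\cC^{-\kappa}$-part (linear heat-flow smoothing) and the stochastic objects $\tilde X[\tau], Y[\tau]$ (tightness from Proposition~\ref{pr:wick_renormalization}). But after the decomposition $u = \tilde X + Y + w$, the remainder $w$ starts from $L^2$-data of size up to $M$ and evolves under a \emph{nonlinear} equation; showing that $w[\tau]$ lands in a uniformly compact set requires a nonlinear energy estimate producing a regularity gain (the paper proves $\|\bwl[\bar T]\|_{H^\kappa}\le C(\lambda)$ with probability $\ge 9/10$, and separately controls $\bwh$ in $H^{1/2}$; combined with short-time local well-posedness this upgrades $u$ from $\cC^{-1+\kappa/2}$ to $\cC^{-\kappa/2}$, whose unit ball is compact in $\cC^{-\kappa}$). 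Without a substitute for that estimate, the uniform lower bound $\pP_1(y,B)\ge\eta$ on $A_M$ does not follow; lower semi-continuity of $y\mapsto\pP_1(y,B)$ helps only on a compact set, and $A_M$ is not one. There is also a small factual slip: the $L^2$-ball of radius $M$ is not compact in $\cC^{-\kappa}$ for small $\kappa$ (indeed $L^2\hookrightarrow\cC^{-1}$ but not $\cC^{-\kappa}$ for $\kappa<1$; Rellich only gives compact embedding into strictly weaker spaces such as $\cC^{-1-\varepsilon}$). That error is not load-bearing since you anyway acknowledge $A_M$ is not compact, but it reinforces that the compactness step cannot be dispatched by soft arguments and needs the PDE regularity estimate the paper supplies.
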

To prove this, first we need to derive some a priori bound for $\cC^{-1+\kappa}$ norm of the solution after a short time. Note that by Sobolev embedding, the Lyapunov function $V$ only controls the $\cC^{-1}$ norm, so this step explores the smoothing effect of the Navier--Stokes flow. To do this, it would be more convenient to use an ansatz similar to the one in \cite{HR24} so that we don't have any martingale term. We still decompose $u[0] = \ur[0] + \us[0]$ with $\|\ur[0]\|_{\cC^{-\kappa}} \leq 2\alpha_0$ and $\|\us[0]\| = \lambda$. Then we define $\Bar{X}$ solving
	\begin{equ}
		\d_t \Bar{X} = \Laplace \Bar{X} + \xi_1 + \xi_2\;, \quad \Bar{X}[0] = \ur[0]\;.
	\end{equ}
	Let $\Bar{Y}$ solve 
	\begin{equ}
		\d_t \Bar{Y} = \Laplace \Bar{Y} + \P \div \left(2\Bar{X} \otimes_s \Bar{Y} + \Bar{X}^{\otimes 2}\right)\;, \quad \Bar{Y}[0] = 0\;.
	\end{equ}
	Let $\Bar{w} = u - \Bar{X} - \Bar{Y}$ and $\Bar{T} = \inf\{t \geq 0: \|\Bar{w}[t]\| \geq 2 \lambda_{+} \} \wedge 1$. Then we define $\bwl = \Bar{w} - \bwh$, where 
	\begin{equ}
		\bwh := 2(\d_t - \Laplace)^{-1}\left(\P \div \big(\Bar{w} \para \hH_{\lambda_+^3} \Bar{X}\big)\right)\;.
	\end{equ}
	We have the following proposition for $\bwl$.
\begin{prop}
	For any initial data $u[0] = \ur + \us$ with $\|\ur\|_{\cC^{-\kappa}} \leq 2\alpha_0$ and $\|\us\| = \lambda$\,, there exists some $C(\lambda)$ such that we have 
	\begin{equ}
	\label{e:bwl_bound}
		\PP\left(\|\bwl[\Bar{T}]\|_{H^{\kappa}}^2 \leq C(\lambda) \right) \geq \frac{9}{10}\;.
	\end{equ}
\end{prop}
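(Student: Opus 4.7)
The plan is to run an $H^{\kappa}$ energy estimate for $\bwl$, exploiting the parabolic smoothing of $\Laplace$ to convert the $L^{2}$ initial datum $\us[0]$ into $H^{\kappa}$ regularity at positive time. The key simplification compared with Section~\ref{sec:energy_estimate} is that this HR24-style ansatz absorbs the full noise into $\Bar X$, so the equation for $\bwl$ is pathwise and no It\^o correction appears; it suffices to run a deterministic estimate on a high-probability event on which $\Bar X$, $\Wick{\Bar X^{\otimes 2}}$, $\Bar Y$ are all of moderate size. Note that the target $C(\lambda)$ is allowed to be arbitrarily large in $\lambda$, so sharpness is not an issue.

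First I would write down the PDE satisfied by $\bwl = \Bar w - \bwh$. Expanding $u^{\otimes 2}$ with $u = \Bar X + \Bar Y + \Bar w$, subtracting the equations for $\Bar X$ and $\Bar Y$, and then subtracting the defining relation $(\dtLaplace)\bwh = -2\P\div(\Bar w \para \hH_{\lambda_+^{3}}\Bar X)$ leaves on the right-hand side only (i) quadratic self-interactions of $\bwl$ and $\bwh$, (ii) the complementary paraproduct $\Bar w \rpareq \hH_{\lambda_+^{3}}\Bar X$, (iii) the low-frequency piece $\Bar w \otimes_s \lL_{\lambda_+^{3}}\Bar X$, and (iv) cross/quadratic terms with $\Bar Y$. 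A verbatim repetition of Lemma~\ref{le:wh_bound} with $K$ replaced by $\lambda_+^{3}$ gives that $\bwh$ is nearly one derivative smoother than $\bwl$, which is what makes (i) harmless.

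Second, I would test the equation against $(I - \Laplace)^{\kappa}\bwl$ with the time weight $t^{2\kappa}$, producing
\begin{equ}
    \tfrac{d}{dt}\bigl(t^{2\kappa}\|\bwl\|_{H^{\kappa}}^{2}\bigr) + 2 t^{2\kappa}\|\bwl\|_{H^{1+\kappa}}^{2} = 2\kappa\, t^{2\kappa-1}\|\bwl\|_{H^{\kappa}}^{2} + 2 t^{2\kappa}\bracket{\bwl,\rR}_{H^{\kappa}}\;,
\end{equ}
where $\rR$ collects all forcing terms. Each pairing on the right is bounded by Lemma~\ref{le:paraproduct} (noting that (i)--(iv) all admit a product estimate in $H^{\kappa-1}$ against a test function in $H^{1-\kappa}$), Young's inequality absorbs the high-regularity factor into the dissipation $t^{2\kappa}\|\bwl\|_{H^{1+\kappa}}^{2}$, and the weight-derivative term is handled by interpolating $\|\bwl\|_{H^{\kappa}}^{2}$ between $\|\bwl\|^{2} \leq 4\lambda_+^{2}$ (valid before $\Bar T$) and the dissipation. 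What remains on the right is $C(\lambda)$ times $(1 + \Phi)$, where $\Phi$ is a polynomial in the pathwise norms of $\Bar X$, $\Bar Y$, and $\hH_{\lambda_+^{3}}\Bar X$ over $[0,1]$.

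Third, integrating from $0$ to $\Bar T$ and dividing by $\Bar T^{2\kappa}$ yields the pathwise bound $\|\bwl[\Bar T]\|_{H^{\kappa}}^{2} \leq C(\lambda)(\Bar T^{-2\kappa} + \Phi)$. Combining a lower bound $\PP(\Bar T \geq t_{0}) \geq 19/20$ for some $t_{0} = t_{0}(\lambda) > 0$ (obtained exactly as in Proposition~\ref{pr:stopping_time_lower_bound} via the local fixed-point argument, since $\Bar T$ has a strictly simpler definition than $T$) with Markov's inequality applied to $\Phi$ (whose finite moments follow from Proposition~\ref{pr:wick_renormalization} and Lemma~\ref{le:stochastic_object}) gives \eqref{e:bwl_bound}. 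The main obstacle is controlling the resonant contribution $\Bar w \reso \hH_{\lambda_+^{3}}\Bar X$ in $H^{\kappa-1}$: one has to extract enough regularity from $\Bar w$ while using the high-pass filter to gain a negative power of $\lambda_+$, and the exponent $3$ in $\lambda_+^{3}$ is chosen precisely large enough so that this gain beats the loss coming from $\|\Bar w\| \leq 2\lambda_+$ with room to spare.
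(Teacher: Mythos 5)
Your plan runs into a genuine gap at the nonlinear term. When you test $\P\div\bigl((\bwl)^{\otimes 2}\bigr)$ against $(I-\Laplace)^{\kappa}\bwl$, the cancellation $\bracket{\P\div(\bwl)^{\otimes 2},\bwl}=0$ that makes the $L^2$ estimate work is no longer available, and the best you can do with paraproduct/duality is
\begin{equ}
    \bigl|\bracket{\P\div\bigl((\bwl)^{\otimes 2}\bigr),(-\Laplace)^{\kappa}\bwl}\bigr| \lesssim \|\grad\bwl\|_{H^{\kappa}}\,\|\bwl\|_{H^{\kappa}}\,\|\bwl\|_{H^1}\;.
\end{equ}
After interpolating $\|\bwl\|_{H^{\kappa}}\|\bwl\|_{H^1}\lesssim \|\bwl\|\,\|\bwl\|_{H^{1+\kappa}}$ and using $\|\bwl\|\lesssim\lambda_+$, this is $\lesssim \lambda_+\|\bwl\|_{H^{1+\kappa}}^2$, i.e.\ of the \emph{same order} as the dissipation $\|\bwl\|_{H^{1+\kappa}}^2$, with a prefactor $\lambda_+$ that can be arbitrarily large. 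Young's inequality therefore does not absorb this term; it overwhelms the dissipation and the time-weighted estimate closes into nothing. A similar problem shows up for the resonance with $\hH_{\lambda_+^3}\Bar X$. You cannot dump these into the ``$C(\lambda)(1+\Phi)$'' bucket, because what is left after trying to absorb them is not a constant but another copy of the dissipation with the wrong sign.

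The paper avoids this by a two-step scheme that you have skipped. First it tests against $\bwl$ itself; there the nonlinear term vanishes exactly (divergence-freeness), and one obtains the $L^2$-level bounds $\int_0^{\Bar T}\|\bwl\|_{H^1}^2\,\md s \le C(\lambda,\L^{\kappa}_{\Bar T})$ and $\Bar T \ge c(\lambda,\L^{\kappa})$. Only then does it test against $(-\Laplace)^{\kappa}\bwl$, and the resulting nonlinear contribution $\|\bwl\|_{H^{\kappa}}^2\|\bwl\|_{H^1}^2$ is handled by Gr\"onwall with the integrable weight $\|\bwl\|_{H^1}^2$, giving $g_t\le C(\lambda,\L^{\kappa}_{\Bar T})\,g_s$ for all $s\le t\le\Bar T$. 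Finally, the issue you address with the $t^{2\kappa}$ weight --- that $\bwl[0]\in L^2$ but not $H^{\kappa}$ --- is handled instead by averaging $g_s$ over $s\in[0,\Bar T]$, which is finite thanks to the $L^2$-level bound. Your time-weighting idea is a legitimate alternative to this averaging step, but it does not by itself replace the $L^2$ estimate and the Gr\"onwall argument; without those, the $H^{\kappa}$ estimate does not close.
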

\begin{rmk}
	The proposition generalises \cite[Lemma~5.3]{HR24} a bit, since we are not assuming the existence of the $H^{\kappa}$ norm of the initial data. In other words, this proposition describes the smoothing effect of the Navier--Stokes flow.
\end{rmk}
\begin{proof}
	The equation for $\bwl$ is 
	\begin{equ}
		\d_t \bwl = \Laplace \bwl + \P \div\Big(\Bar{w}^{\otimes 2} + \big(2\lL_{\lambda_{+}^3} \Bar{X} + 2 \Bar{Y}\big) \otimes_s \Bar{w} + 2 \Bar{w} \rpareq \hH_{\lambda_{+}^3} \Bar{X} + \Bar{Y}^{\otimes 2}\Big)\;.
	\end{equ}	
	Define $\L^{\kappa}_{\Bar{T}} := 1 + \sup_{0\leq s \leq t}\left(\|X[s]\|_{\cC^{-\kappa}} + \|Y[s]\|_{\cC^{2\kappa}} \right)$.
	As in Lemma~\ref{le:wh_bound} or \cite[Lemma 4.6]{HR24}, we can get that before $\Bar{T}$,  
	\begin{equ}
	\label{e:bwh_bound}
		\|\bwh\|_{H^{\frac{1}{2}}} \lesssim \|\Bar{w} \para \hH_{\lambda_{+}^3} \Bar{X} \|_{H^{-\frac{1}{2}}} \lesssim \|\Bar{w}\| \|\hH_{\lambda_{+}^3} \Bar{X}\|_{\Besov{-\frac{1}{2}}{\infty, 2}} \lesssim \L^{\kappa}_{\Bar{T}}\;.
	\end{equ}
	Then it follows that 
	\begin{equs}
		\Big| \bracket{\P \div\big(\Bar{w}^{\otimes 2}\big), \bwl} \Big| &= \Big| 2\bracket{\bwl \otimes_s \bwh, \grad \bwl} + \bracket{\big(\Bar{w}^{\hH}\big)^{\otimes 2}, \grad \bwl} \Big| \\
		&\leq \delta \|\grad \bwl\|^2 + C(\delta, \L^{\kappa}_{\Bar{T}}) \big(\|\bwl\|^2 + 1\big)\;.
	\end{equs} 
	The other terms are rather easy to estimate, following similar arguments as in Section~\ref{sec:bound}\;. Thanks to the dissipative term $\bracket{\Laplace \bwl, \bwl} = - \|\grad \bwl\|^2$, finally we can get that 
	\begin{equs}
	\label{e:energy_estimate_HR23}
		\int_0^{\Bar{T}} \|\bwl[s]\|_{H^1}^2 \md s &\leq C(\lambda, \L^{\kappa}_{\Bar{T}})\;, \\
	\label{e:stopping_time_HR23}
		\Bar{T} &\geq c(\lambda, \L^{\kappa})\;
	\end{equs}
	with some constants $C(\lambda, \L^{\kappa}_{\Bar{T}})$ and $c(\lambda, \L^{\kappa})$. Then we estimate 
	\begin{equ}
		g_t := 1 + \bracket{\bwl[t], (-\Laplace)^{\kappa} \bwl[t]}\;.
	\end{equ}
	First, by Hölder and Sobolev inequality we have
	\begin{equs}
		\bracket{\P \div \big(\bwl\big)^{\otimes 2}, (-\Laplace)^{\kappa} \bwl } &\lesssim \|\grad \bwl\|_{H^{\kappa}} \|\big(\bwl\big)^{\otimes 2}\|_{H^{\kappa}} \\
		& \lesssim \|\grad \bwl\|_{H^{\kappa}}^2 + \|\big(\bwl\big)^{\otimes 2}\|_{H^{\kappa}}^2 \\
		& \lesssim \|\grad \bwl\|_{H^{\kappa}}^2 + \|\bwl \|_{H^{\kappa}}^2 \|\bwl\|_{H^1}^2\;.
	\end{equs}
	For the other terms appearing in $\frac{\md}{\md t}g_t $, we estimate them the same way as in Section~\ref{sec:energy_estimate} or in \cite[Lemma 5.3]{HR24}. Finally we get that 
	\begin{equ}
		\frac{\md}{\md t}g_t \leq C g_t \|\bwl\|_{H^1}^2 + C(\lambda, \L^{\kappa}_{\Bar{T}})\;.
	\end{equ}
	Combining it with \eqref{e:energy_estimate_HR23}, we get that for any $0 \leq s < t \leq \Bar{T}$, $g_t \leq C(\lambda, \L^{\kappa}_{\Bar{T}}) g_s$\,. Therefore, first taking an average in time and then using \eqref{e:energy_estimate_HR23} and \eqref{e:stopping_time_HR23}, we get 
	\begin{equ}
		g_{\Bar{T}} \leq \frac{1}{\Bar{T}}\int_0^{\Bar{T}}C(\lambda, \L^{\kappa}_{\Bar{T}}) g_s \md s \leq C_*(\lambda, \L^{\kappa}_{\Bar{T}})\;.
	\end{equ}
	Finally choose some large $C(\lambda)$ such that 
	\begin{equ}
		\PP(C_*(\lambda, \L^{\kappa}_{\Bar{T}}) > C(\lambda)) < \frac{1}{10}\;.
	\end{equ}
	Then on this event, we have
	\begin{equ}
		g_{\Bar{T}} = 1 + \|\bwl[\Bar{T}]\|_{H^{\kappa}}^2 \leq C(\lambda)\;.
	\end{equ}
	The proposition is proved.
\end{proof}

\begin{proof}[Proof of Proposition~\ref{pr:small_set}]
	We break the proof into several steps.
	\step{Step 1:} We first show that there exists some small time $t(K) \leq 1$ and some compact set $\fK \subset \cC^{-\kappa}$ both depending on $K$ such that 
	\begin{equ}
		\pP_{t(K)}(x, \fK) \geq \frac{3}{4}\;,\qquad \forall x\,:\, V_{2\alpha_0}(x) \le K\;.
	\end{equ}	
	Recall the decomposition $u = \Bar{X} + \Bar{Y} + w = \Bar{X} + \Bar{Y} + \bwh + \bwl$ previously defined.
	By \eqref{e:bwl_bound} and \eqref{e:bwh_bound} we get that there exists some event with probability larger than $9/10$, on which we have 
	\begin{equ}
		\|w[t_0(K)]\|_{H^{\kappa}} \leq C(K)\;,
	\end{equ}
	where $t_0(K) \leq 1$ is some small time depending on $K$. It follows that at time $t_0(K)$, with probability larger than $9/10$ we have $\|u[t_0(K)]\|_{\cC^{-1 + \frac{\kappa}{2}}} \leq C(K)$. Combining it with the fixed point argument in \cite{DPD02} (see also \cite[Proposition~3.2]{HR24}) we can show that at some time $t(K) > t_0(K)$, with probability larger than $3/4$ we have 
	\begin{equ}
		\|u[t(K)]\|_{\cC^{-\kappa/2}} \leq C'(K)\;.
	\end{equ}
	The claim then follows from the compactness of the embedding $\cC^{-\kappa/2} \hookrightarrow \cC^{-\kappa}$.

\step{Step 2:} We show that for any initial data $u_0$ with $V_{2\alpha_0}(u_0) \leq K$ and any small $\eps > 0$,
	\begin{equ}
		\PP\left(\|u[t_* - 1]\|_{\cC^{-\kappa}} \leq \eps\right) \geq \delta(\eps, K) > 0\;.
	\end{equ}
	By Proposition~\ref{pr:Strong_Feller}, the transition probability is continuous in total variation distance. Therefore,
	\begin{equ}
		\inf_{v\in \fK} \PP\left( \|\Phi_{t(K), t_* - 1} (v)\|_{\cC^{-\kappa}} \leq \eps \right)
	\end{equ}
	attains its minimum at some point since $\fK$ is compact. This is larger than $0$ by our support theorem \eqref{e:accessible}. 
		
\step{Step 3:} Now we start two SPDEs $u$ and $\Tilde{u}$ with independent noises to time $t_* -1$. Denote $\fA = \{\|u[t_* - 1]\|_{\cC^{-\kappa}} \leq \eps\,,\, \|\Tilde{u}[t_* - 1]\|_{\cC^{-\kappa}} \leq \eps \}$\,, then by independence and the previous step we have
	\begin{equ}
	\label{e:Doeblin1}
		\PP(\fA ) 
		\geq \big(\delta(\eps, K)\big)^2\;.
	\end{equ}
	Then we run the two processes from $t_* - 1$ to $t_*$ with the same noise. By continuity of total variation distance, for any $\eta > 0$, we can choose $\eps$ sufficiently small such that for any measurable $A \subset \cC^{-\kappa}$,
	\begin{equ}
	\label{e:Doeblin2}
		\sup_{\substack{\|x\|_{\cC^{-\kappa}}\leq \eps\\ \|y\|_{\cC^{-\kappa}}\leq \eps}} \Big| \PP \left(u[t_*] \in A; u[t_* - 1] = x\right) -  \PP \left(\Tilde{u}[t_*] \in A; \Tilde{u}[t_* - 1] = y\right)\Big| \leq 1- \eta\;.
	\end{equ} 
	Therefore, under this coupling, for any measurable $A\subset \cC^{-\kappa}$, we have
	\begin{equs}
		\,&\Big| \PP^{u_0} \left(u[t_*]\in A\right) - \PP^{\Tilde{u}_0} \left(\Tilde{u}[t_*]\in A\right) \Big| \\
		& \leq \Big|\PP^{u_0, \Tilde{u}_0}\left(u[t_*] \in A \,;\, \fA \right) - \PP^{u_0, \Tilde{u}_0}\left(u[t_*] \in A  \,;\, \fA \right)\Big|\\
		& + \Big|\PP^{u_0, \Tilde{u}_0}\left(u[t_*] \in A \,;\, \fA^c \right) - \PP^{u_0, \Tilde{u}_0}\left(u[t_*] \in A  \,;\, \fA^c \right)\Big|\;.
	\end{equs}
	The third line is always smaller than $\PP(\fA^c)$, since $|x - y| \leq x\vee y$ if $x, y \geq 0$. As for the second line, we first condition on $\fF_{t_*-1}$ and then use \eqref{e:Doeblin2} to get that it is smaller than $(1-\eta) \PP(\fA)$. Combining this with \eqref{e:Doeblin1}, the proposition is proved.	
\end{proof}

\begin{proof}[Proof of Theorem~\ref{thm:exp_mixing}]
	By Theorem~\ref{thm:main_result}, Proposition~\ref{pr:small_set} and \cite[Theorem~3.6]{Hai10} (see also \cite[Theorem~1.5]{HMS11} for a slightly different formulation), for any $k \in \N^*$ we have 
	\begin{equ}
		\|\pP_{k T_*}(x, \cdot) - \mu_{\star}\|_{TV} \leq C \rho^k (1 + V_{2\alpha_0}(x))
	\end{equ}
	with some $0 < \rho < 1$, provided that $T_*$ is taken to be large enough. This is a discrete time version of \eqref{e:exp_mixing}. The continuum version \eqref{e:exp_mixing} follows easily, since we can take $T_*$ to be any number that is large enough. For \eqref{e:exp_tail}, we start the equation from zero initial data. The result then follows by combining the exponential mixing result \eqref{e:exp_mixing}, moment bounds \eqref{e:main_result2} and Markov inequality.
\end{proof}

\bibliographystyle{Martin}
\bibliography{ref}

\begin{thebibliography}{CdLFW24}
\expandafter\ifx\csname url\endcsname\relax
  \def\url#1{\texttt{#1}}\fi
\expandafter\ifx\csname urlprefix\endcsname\relax\def\urlprefix{URL }\fi
\expandafter\ifx\csname href\endcsname\relax
  \def\href#1#2{#2}\fi
\expandafter\ifx\csname burlalt\endcsname\relax
  \def\burlalt#1#2{\href{#2}{\texttt{#1}}}\fi

\bibitem[BC24a]{BC24b}
\textsc{B.~Bringmann} and \textsc{S.~Cao}.
\newblock Global well-posedness of the dynamical sine-gordon model up to
  $6\pi$.
\newblock \emph{arXiv} (2024).
\newblock \burlalt{arXiv:2410.15493}{http://arxiv.org/abs/2410.15493}.

\bibitem[BC24b]{BC24a}
\textsc{B.~Bringmann} and \textsc{S.~Cao}.
\newblock Global well-posedness of the stochastic {A}belian-{H}iggs equations
  in two dimensions.
\newblock \emph{arXiv preprint} (2024).
\newblock \burlalt{arXiv:2403.16878}{http://arxiv.org/abs/2403.16878}.

\bibitem[BCD11]{BCD11}
\textsc{H.~Bahouri}, \textsc{J.-Y. Chemin}, and \textsc{R.~Danchin}.
\newblock \emph{Fourier analysis and nonlinear partial differential equations},
  vol. 343 of \emph{Grundlehren der mathematischen Wissenschaften [Fundamental
  Principles of Mathematical Sciences]}.
\newblock Springer, Heidelberg, 2011.
\newblock
  \burlalt{doi:10.1007/978-3-642-16830-7}{http://dx.doi.org/10.1007/978-3-642-16830-7}.

\bibitem[BPS24]{BPS24}
\textsc{J.~Bedrossian} and \textsc{S.~Punshon-Smith}.
\newblock Chaos in stochastic 2d {G}alerkin-{N}avier-{S}tokes.
\newblock \emph{Comm. Math. Phys.} \textbf{405}, no.~4, (2024), Paper No. 107,
  42.
\newblock
  \burlalt{doi:10.1007/s00220-024-04949-0}{http://dx.doi.org/10.1007/s00220-024-04949-0}.

\bibitem[CCHS22]{CCHS22}
\textsc{A.~Chandra}, \textsc{I.~Chevyrev}, \textsc{M.~Hairer}, and
  \textsc{H.~Shen}.
\newblock Langevin dynamic for the 2{D} {Y}ang-{M}ills measure.
\newblock \emph{Publ. Math. Inst. Hautes \'Etudes Sci.} \textbf{136}, (2022),
  1--147.
\newblock
  \burlalt{doi:10.1007/s10240-022-00132-0}{http://dx.doi.org/10.1007/s10240-022-00132-0}.

\bibitem[CCHS24]{CCHS24}
\textsc{A.~Chandra}, \textsc{I.~Chevyrev}, \textsc{M.~Hairer}, and
  \textsc{H.~Shen}.
\newblock Stochastic quantisation of {Y}ang-{M}ills-{H}iggs in 3{D}.
\newblock \emph{Invent. Math.} \textbf{237}, no.~2, (2024), 541--696.
\newblock
  \burlalt{doi:10.1007/s00222-024-01264-2}{http://dx.doi.org/10.1007/s00222-024-01264-2}.

\bibitem[CdLFW24]{CFW24}
\textsc{A.~Chandra}, \textsc{G.~de~Lima~Feltes}, and \textsc{H.~Weber}.
\newblock A priori bounds for 2-d generalised {P}arabolic {A}nderson {M}odel.
\newblock \emph{arXiv preprint} (2024).
\newblock \burlalt{arXiv:2402.05544}{http://arxiv.org/abs/2402.05544}.

\bibitem[CF96]{CF96}
\textsc{J.-M. Coron} and \textsc{A.~V. Fursikov}.
\newblock Global exact controllability of the {$2$}d {N}avier-{S}tokes
  equations on a manifold without boundary.
\newblock \emph{Russian J. Math. Phys.} \textbf{4}, no.~4, (1996), 429--448.

\bibitem[CF18]{CF18}
\textsc{K.~Chouk} and \textsc{P.~K. Friz}.
\newblock Support theorem for a singular {SPDE}: the case of g{PAM}.
\newblock \emph{Ann. Inst. Henri Poincar\'e{} Probab. Stat.} \textbf{54},
  no.~1, (2018), 202--219.
\newblock
  \burlalt{doi:10.1214/16-AIHP800}{http://dx.doi.org/10.1214/16-AIHP800}.

\bibitem[CS23]{CS23}
\textsc{I.~Chevyrev} and \textsc{H.~Shen}.
\newblock Invariant measure and universality of the 2{D} {Y}ang-{M}ills
  {L}angevin dynamic.
\newblock \emph{arXiv preprint} (2023).
\newblock \burlalt{arXiv:2302.12160}{http://arxiv.org/abs/2302.12160}.

\bibitem[DPD02]{DPD02}
\textsc{G.~Da~Prato} and \textsc{A.~Debussche}.
\newblock Two-dimensional {N}avier-{S}tokes equations driven by a space-time
  white noise.
\newblock \emph{J. Funct. Anal.} \textbf{196}, no.~1, (2002), 180--210.
\newblock
  \burlalt{doi:10.1006/jfan.2002.3919}{http://dx.doi.org/10.1006/jfan.2002.3919}.

\bibitem[DRZ17]{DRZ17}
\textsc{J.~Ding}, \textsc{R.~Roy}, and \textsc{O.~Zeitouni}.
\newblock Convergence of the centered maximum of log-correlated {G}aussian
  fields.
\newblock \emph{Ann. Probab.} \textbf{45}, no.~6A, (2017), 3886--3928.
\newblock
  \burlalt{doi:10.1214/16-AOP1152}{http://dx.doi.org/10.1214/16-AOP1152}.

\bibitem[FG95]{FG95}
\textsc{F.~Flandoli} and \textsc{D.~Gatarek}.
\newblock Martingale and stationary solutions for stochastic {N}avier-{S}tokes
  equations.
\newblock \emph{Probab. Theory Related Fields} \textbf{102}, no.~3, (1995),
  367--391.
\newblock
  \burlalt{doi:10.1007/BF01192467}{http://dx.doi.org/10.1007/BF01192467}.

\bibitem[FM95]{FM95}
\textsc{F.~Flandoli} and \textsc{B.~Maslowski}.
\newblock Ergodicity of the {$2$}-{D} {N}avier-{S}tokes equation under random
  perturbations.
\newblock \emph{Comm. Math. Phys.} \textbf{172}, no.~1, (1995), 119--141.

\bibitem[FO10]{FO10}
\textsc{P.~Friz} and \textsc{H.~Oberhauser}.
\newblock A generalized {F}ernique theorem and applications.
\newblock \emph{Proc. Amer. Math. Soc.} \textbf{138}, no.~10, (2010),
  3679--3688.
\newblock
  \burlalt{doi:10.1090/S0002-9939-2010-10528-2}{http://dx.doi.org/10.1090/S0002-9939-2010-10528-2}.

\bibitem[GH19]{GH19}
\textsc{M.~Gubinelli} and \textsc{M.~Hofmanov\'a}.
\newblock Global solutions to elliptic and parabolic {$\Phi^4$} models in
  {E}uclidean space.
\newblock \emph{Comm. Math. Phys.} \textbf{368}, no.~3, (2019), 1201--1266.
\newblock
  \burlalt{doi:10.1007/s00220-019-03398-4}{http://dx.doi.org/10.1007/s00220-019-03398-4}.

\bibitem[GM05]{GM05}
\textsc{B.~Goldys} and \textsc{B.~Maslowski}.
\newblock Exponential ergodicity for stochastic {B}urgers and 2{D}
  {N}avier-{S}tokes equations.
\newblock \emph{J. Funct. Anal.} \textbf{226}, no.~1, (2005), 230--255.
\newblock
  \burlalt{doi:10.1016/j.jfa.2004.12.009}{http://dx.doi.org/10.1016/j.jfa.2004.12.009}.

\bibitem[GP17]{GP17}
\textsc{M.~Gubinelli} and \textsc{N.~Perkowski}.
\newblock K{PZ} reloaded.
\newblock \emph{Comm. Math. Phys.} \textbf{349}, no.~1, (2017), 165--269.
\newblock
  \burlalt{doi:10.1007/s00220-016-2788-3}{http://dx.doi.org/10.1007/s00220-016-2788-3}.

\bibitem[Hai10]{Hai10}
\textsc{M.~Hairer}.
\newblock Convergence of {M}arkov processes.
\newblock \emph{Lecture Notes} (2010).

\bibitem[HM06]{HM06}
\textsc{M.~Hairer} and \textsc{J.~C. Mattingly}.
\newblock Ergodicity of the 2{D} {N}avier-{S}tokes equations with degenerate
  stochastic forcing.
\newblock \emph{Ann. of Math. (2)} \textbf{164}, no.~3, (2006), 993--1032.
\newblock
  \burlalt{doi:10.4007/annals.2006.164.993}{http://dx.doi.org/10.4007/annals.2006.164.993}.

\bibitem[HM18]{HM18}
\textsc{M.~Hairer} and \textsc{J.~Mattingly}.
\newblock The strong {F}eller property for singular stochastic {PDE}s.
\newblock \emph{Ann. Inst. Henri Poincar\'e{} Probab. Stat.} \textbf{54},
  no.~3, (2018), 1314--1340.
\newblock
  \burlalt{doi:10.1214/17-AIHP840}{http://dx.doi.org/10.1214/17-AIHP840}.

\bibitem[HMS11]{HMS11}
\textsc{M.~Hairer}, \textsc{J.~C. Mattingly}, and \textsc{M.~Scheutzow}.
\newblock Asymptotic coupling and a general form of {H}arris' theorem with
  applications to stochastic delay equations.
\newblock \emph{Probab. Theory Related Fields} \textbf{149}, no. 1-2, (2011),
  223--259.
\newblock
  \burlalt{doi:10.1007/s00440-009-0250-6}{http://dx.doi.org/10.1007/s00440-009-0250-6}.

\bibitem[HR24]{HR24}
\textsc{M.~Hairer} and \textsc{T.~Rosati}.
\newblock Global existence for perturbations of the 2{D} stochastic
  {N}avier-{S}tokes equations with space-time white noise.
\newblock \emph{Ann. PDE} \textbf{10}, no.~1, (2024), Paper No. 3, 46.
\newblock
  \burlalt{doi:10.1007/s40818-023-00165-6}{http://dx.doi.org/10.1007/s40818-023-00165-6}.

\bibitem[HS22a]{HS22a}
\textsc{M.~Hairer} and \textsc{P.~Sch\"onbauer}.
\newblock The support of singular stochastic partial differential equations.
\newblock \emph{Forum Math. Pi} \textbf{10}, (2022), Paper No. e1, 127.
\newblock
  \burlalt{doi:10.1017/fmp.2021.18}{http://dx.doi.org/10.1017/fmp.2021.18}.

\bibitem[HS22b]{HS22b}
\textsc{M.~Hairer} and \textsc{R.~Steele}.
\newblock The {$\Phi_3^4$} measure has sub-{G}aussian tails.
\newblock \emph{J. Stat. Phys.} \textbf{186}, no.~3, (2022), Paper No. 38, 25.
\newblock
  \burlalt{doi:10.1007/s10955-021-02866-3}{http://dx.doi.org/10.1007/s10955-021-02866-3}.

\bibitem[KT22]{KT22}
\textsc{F.~Kunick} and \textsc{P.~Tsatsoulis}.
\newblock Gradient-type estimates for the dynamic $\varphi^4_2$-model.
\newblock \emph{arXiv Preprint} (2022).
\newblock \burlalt{arXiv:2202.11036}{http://arxiv.org/abs/2202.11036}.

\bibitem[MW17a]{MW17b}
\textsc{J.-C. Mourrat} and \textsc{H.~Weber}.
\newblock The dynamic {$\Phi^4_3$} model comes down from infinity.
\newblock \emph{Comm. Math. Phys.} \textbf{356}, no.~3, (2017), 673--753.
\newblock
  \burlalt{doi:10.1007/s00220-017-2997-4}{http://dx.doi.org/10.1007/s00220-017-2997-4}.

\bibitem[MW17b]{MW17a}
\textsc{J.-C. Mourrat} and \textsc{H.~Weber}.
\newblock Global well-posedness of the dynamic {$\Phi^4$} model in the plane.
\newblock \emph{Ann. Probab.} \textbf{45}, no.~4, (2017), 2398--2476.
\newblock
  \burlalt{doi:10.1214/16-AOP1116}{http://dx.doi.org/10.1214/16-AOP1116}.

\bibitem[MW20]{MW20}
\textsc{A.~Moinat} and \textsc{H.~Weber}.
\newblock Space-time localisation for the dynamic {$\Phi^4_3$} model.
\newblock \emph{Comm. Pure Appl. Math.} \textbf{73}, no.~12, (2020),
  2519--2555.
\newblock \burlalt{doi:10.1002/cpa.21925}{http://dx.doi.org/10.1002/cpa.21925}.

\bibitem[PW81]{ParisiWu}
\textsc{G.~Parisi} and \textsc{Y.~S. Wu}.
\newblock Perturbation theory without gauge fixing.
\newblock \emph{Sci. Sinica} \textbf{24}, no.~4, (1981), 483--496.

\bibitem[RY99]{RY99}
\textsc{D.~Revuz} and \textsc{M.~Yor}.
\newblock \emph{Continuous martingales and {B}rownian motion}, vol. 293 of
  \emph{Grundlehren der mathematischen Wissenschaften [Fundamental Principles
  of Mathematical Sciences]}.
\newblock Springer-Verlag, Berlin, third ed., 1999.
\newblock
  \burlalt{doi:10.1007/978-3-662-06400-9}{http://dx.doi.org/10.1007/978-3-662-06400-9}.

\bibitem[SZZ24]{SZZ24}
\textsc{H.~Shen}, \textsc{R.~Zhu}, and \textsc{X.~Zhu}.
\newblock {Global well-posedness for 2D generalized Parabolic Anderson Model
  via paracontrolled calculus}.
\newblock \emph{arXiv preprint} (2024).
\newblock \burlalt{arXiv:2402.19137}{http://arxiv.org/abs/2402.19137}.

\bibitem[TW18]{TW18}
\textsc{P.~Tsatsoulis} and \textsc{H.~Weber}.
\newblock Spectral gap for the stochastic quantization equation on the
  2-dimensional torus.
\newblock \emph{Ann. Inst. Henri Poincar\'e{} Probab. Stat.} \textbf{54},
  no.~3, (2018), 1204--1249.
\newblock
  \burlalt{doi:10.1214/17-AIHP837}{http://dx.doi.org/10.1214/17-AIHP837}.

\bibitem[ZZ17]{ZZ17}
\textsc{R.~Zhu} and \textsc{X.~Zhu}.
\newblock Strong {F}eller property for {N}avier-{S}tokes equations driven by
  space-time white noise.
\newblock \emph{arXiv preprint} (2017).
\newblock \burlalt{arXiv:1709.09306}{http://arxiv.org/abs/1709.09306}.

\bibitem[ZZZ22]{ZZZ22}
\textsc{X.~Zhang}, \textsc{R.~Zhu}, and \textsc{X.~Zhu}.
\newblock Singular {HJB} equations with applications to {KPZ} on the real line.
\newblock \emph{Probab. Theory Related Fields} \textbf{183}, no. 3-4, (2022),
  789--869.
\newblock
  \burlalt{doi:10.1007/s00440-022-01137-w}{http://dx.doi.org/10.1007/s00440-022-01137-w}.

\end{thebibliography}
\end{document}